\def\x{2.4}
\newtheorem{Theorem}{Theorem}[section]
\newtheorem{Proposition}[Theorem]{Proposition}
\newtheorem{Lemma}[Theorem]{Lemma}
\newtheorem{Corollary}[Theorem]{Corollary}
\newtheorem{Definition}[Theorem]{Definition}
\newtheorem{Remark}[Theorem]{Remark}
\newcounter{IssueCounter}
\newtheorem{Issue}[IssueCounter]{Issue}
\newcounter{QuestionCounter}
\newtheorem{Question}[QuestionCounter]{Question}
\definecolor{color1}{rgb}{0.1,0.3,0.0}
\newcounter{ReminderCounter}
\newtheorem{Reminder}[ReminderCounter]{Reminder}
\newcommand{\xMapsto}[2][]{\ext@arrow 0599{\Mapstofill@}{#1}{#2}}
\def\Mapstofill@{\arrowfill@{\Mapstochar\Relbar}\Relbar\Rightarrow}
\newcommand{\mb}[1]{\mathbb{#1}}
\newcommand{\mc}[1]{\mathcal{#1}}
\newcommand{\mf}[1]{\mathfrak{#1}}
\newcommand{\Ad}{\mathrm{Ad}}
\newcommand{\diag}{\mathrm{diag}}
\newcommand{\res}{\mathrm{res}\,}
\definecolor{color2}{rgb}{0.30,0.0,0.9}
\newcommand{\A}{\mathrm{Aut}}
\newcommand{\g}{{\mathfrak{g}}}
\newcommand{\ox}{\mathcal{O}_{\mathbb{X}}}
\newcommand{\CC}{{\mathbb{C}}}
\newcommand{\RR}{{\mathbb{R}}}
\newcommand{\LL}{{\Lambda}}
\newcommand{\ZZ}{{\mathbb{Z}}}
\newcommand{\twp}{\tilde{\wp}}
\newcommand{\NN}{\mathbb{N}}
\newcommand{\ot}{\mathcal{O}_{\mathbb{T}}}
\newcommand{\TT}{{\mathbb{T}}}
\begin{document}
\title{Normal forms of elliptic automorphic Lie algebras and Landau--Lifshitz type of equations}
\author{S. Lombardo, C.J. Oelen}
\maketitle

\begin{abstract}
We present normal forms of elliptic automorphic Lie algebras with dihedral symmetry of order 4, which arise naturally in the context of Landau--Lifshitz type of equations. These normal forms provide a transparent description and allow a classification of such Lie algebras over $\CC$. Using this perspective, we show that a Lie algebra introduced by Uglov, as well as the hidden symmetry algebra of the Landau--Lifshitz equation by Holod, are both isomorphic to an elliptic $\mf{sl}(2,\CC)$-current algebra. 
Furthermore, we realise the Wahlquist--Estabrook algebra of the Landau--Lifshitz equation in terms of elliptic automorphic Lie algebras. This construction reveals that, as complex Lie algebras, it is isomorphic to the direct sum of an $\mf{sl}(2,\CC)$-current algebra and the two-dimensional abelian Lie algebra $\CC^2$. Finally, we explicitly implement the automorphic Lie algebra framework in the context of an $n$-component generalisation of the Landau--Lifshitz equation by Golubchik and Sokolov in the case of $n=3$.
\end{abstract}

\section{Introduction}
Automorphic Lie algebras are Lie algebras of invariants that first emerged in the context of integrable systems. They originated in the study of algebraic reductions of Lax pairs by Lombardo and Mikhailov \cite{lombardo2004reductions,lombardo2005reduction},  related to the notion of reduction groups, proposed by Mikhailov in \cite{mikhailov1979integrability} and \cite{mikhailov1980reduction}.  While a precise definition will be given in Section \ref{sec:Basics}, for now the reader may think of automorphic Lie algebras as Lie algebras of meromorphic maps (usually with prescribed poles) from a compact Riemann surface $X$ into a finite-dimensional Lie algebra $\g$, which are equivariant with respect to a finite group $\Gamma$  acting on $X$ and on $\g$, both by automorphisms. The group $\Gamma$ plays the role of the reduction group in the original context of integrable systems. 
The case where $X$ has genus 0 has been extensively studied in the past two decades by Knibbeler, Lombardo and Sanders \cite{lombardo2010classification,knibbeler2014automorphic,knibbeler2017higher,MR4072405,knibbeler2020hereditary}. More recently, hyperbolic automorphic Lie algebras have been introduced by Knibbeler, Lombardo and Veselov \cite{knibmod}. The study of automorphic Lie algebras on genus 1 Riemann surfaces, also known as elliptic automorphic Lie algebras, has been initiated in a PhD thesis by Oelen \cite{oelen2022automorphic}, which resulted in a classification by Knibbeler, Lombardo and Oelen in \cite{knibbeler2024classification}. One of the main objectives of the theory is to obtain certain normal forms of these algebras, which make it possible to identify their Lie-isomorphism types. In this context, the recent work by Knibbeler \cite{knibbeler2025uniform} contains a uniform construction of normal forms in the case of genus 0.

Prior to the systematic study of elliptic automorphic Lie algebras, examples have appeared in the literature, particularly in the context of integrable systems. 
For example, Reiman and Semenov-Tyan-Shanskii \cite{reiman1989lie} introduced Lie algebras of automorphic, meromorphic $\mf{sl}(n,\CC)$-valued functions on a complex torus in relation to Lax equations with
spectral parameter on an elliptic curve. Furthermore, Uglov \cite{uglov1994lie} studied the $\mf{sl}(2,\CC)$ case, and addressed applications to quantisation and  quantum groups related to elliptic R-matrices.
More recently, elliptic automorphic Lie algebras with $\g=\mf{sl}(n,\CC)$ were investigated in \cite{skrypnyk2012quasi}, in which various algebraic constructions related to these Lie algebras were carried out.

In the current paper, we present a new way of obtaining and classifying automorphic Lie algebras that have appeared frequently in the context of integrable systems. Namely, those algebras with $\g=\mf{sl}(2,\CC)$, $X$ a complex torus, and symmetry group $\Gamma =D_2$, where $D_2$ is the dihedral group of order 4 -- also known as Klein's four group. We shall assume throughout the current paper that $D_2$ is embedded as translations in $\A(X)$. In Section \ref{sec:Basics} we describe this new method, which employs theta functions, and which has the benefit of being more transparent from an algebraic point of view, whilst having the potential of being generalisable to higher rank Lie algebras and different symmetry groups. 

We extend $D_2$-automorphic Lie algebras with one orbit of poles to those with finite unions of orbits of poles. 
Moreover, we consider a broader class of automorphic Lie algebras with the same geometric and group-theoretic setup but with $\g$ an arbitrary complex reductive Lie algebra. Finally, we briefly touch upon the case of $\g=\mf{sl}(2,\RR)$, where we consider a type of fixed-point Lie subalgebra in the above context. 

In Section \ref{sec:NormalForms} we discuss the relation between a $D_2$-automorphic Lie algebra based on $\mf{sl}(2,\CC)$ and a Lie  algebra called $\mc{E}_{k,\nu^{\pm}}$ introduced by Uglov in \cite{uglov1994lie}. We will show that the geometric realisation of $\mc{E}_{k,\nu^{\pm}}$ as an elliptic automorphic Lie algebra ($k$ is an elliptic modulus and $\nu^{\pm}\in \CC/\LL$), is isomorphic to an elliptic $\mf{sl}(2,\CC)$-current algebra. 
More specifically, we establish an isomorphism of Lie algebras
 $$\mc{E}_{k,\nu^{\pm}}\cong \mf{sl}(2,\CC)\otimes_{\CC}\CC[\wp_{\frac{1}{2}\LL},\wp_{\frac{1}{2}\LL}', \xi],$$ where $\wp_{\frac{1}{2}\LL}$ is the Weierstrass $\wp$-function associated to a suitable lattice $\frac{1}{2}\LL$ and $\xi$ is a function $\CC/\frac{1}{2}\LL$ with two simple poles depending on $\nu^-,\nu^{+}$.
 The Lie (bi)algebra $\mc{E}_{k,\nu^{\pm}}$ can be quantised and the corresponding quantum bialgebra is related to the eight-vertex model R-matrix \cite{uglov1993quantum}. 
 
  We also find a new basis of the hidden symmetry algebra of the Landau--Lifshitz equation by Holod \cite{holod1987hidden}, denoted $\mc{H}_{r_1,r_2,r_3}$,  defined on the elliptic curve $$E_{r_1,r_2,r_3}: \lambda_i^2-\lambda_j^2=r_j-r_i, \quad i,j=1,2, 3,$$ in $\CC^3$, where $r_1,r_2,r_3$ are pairwise distinct constants. In this basis, $\mc{H}_{r_1,r_2,r_3}$ is revealed to be isomorphic to an elliptic current algebra:  $$\mc{H}_{r_1,r_2,r_3}\cong \mf{sl}(2,\CC)\otimes_{\CC}\CC[x,x^{-1},y]/(y^2-(x-A_1)(x-A_2)(x-A_3)),$$
where the constants $A_i$ are given by $A_i=r_i-\frac{1}{3}(r_1+r_2+r_3)$. Finally, we prove that Holod's algebra $\mc{H}_{r_1,r_2,r_3}$ is isomorphic to Uglov's algebra $\mc{E}_{\frac{1}{\sqrt{2}},0,\frac{1+\mathrm{i}}{4}}$, whenever the underlying curve $E_{r_1,r_2,r_3}$ is isomorphic to a complex torus corresponding to the square lattice $\ZZ+\ZZ\mathrm{i}$.

In Section \ref{sec:LL} we study a relation between integrable partial differential equations (PDEs) and (elliptic) automorphic Lie algebras. In \cite{igonin2013infinite}, the authors consider a PDE which can be viewed as a multicomponent generalisation of the Landau--Lifshitz equation, originally introduced in \cite{golubchik2000multicomponent}. To present this equation, let us first introduce some notation. Let $\mathbb{K}$ be either $\CC$ or $\RR$ and let $n\geq 2$ be an integer.
We write $S=(s^1(x,t),\ldots, s^n(x,t))^T\in \mathbb{K}^n$, and fix pairwise distinct constants $r_1,\ldots, r_n\in \mathbb{K}$. For vectors $V=(v_1,\ldots, v_n)^T$, $W=(w_1,\ldots,w_n)^T\in \mathbb{K}^n$, let $\langle v,w \rangle=\sum_{i=1}^nv_iw_i$. The PDE under consideration is given by
 \begin{equation}\label{eq:PDE}
S_t=\left(S_{xx}+\frac{3}{2}\langle S_x,S_x\rangle S\right)_x+\frac{3}{2}\langle S,RS\rangle S_x,\quad \langle S,S\rangle=1,
\end{equation}
where $R=\diag(r_1,\ldots,r_n).$ In this context, we mention the fully anisotropic Landau--Lifshitz equation \cite{golubchik2000multicomponent} 
\begin{equation}\label{eq:LL}
u_t=u\times u_{xx}+Ru\times u, \quad \langle u,u\rangle=1,
\end{equation}
where $u=u(x,t)\in \mathbb{K}^3$, the symbol $\times$ denotes the cross product and $R=\diag(r_1,r_2,r_3)$ encodes the anisotropy.
For $n=3$, system \eqref{eq:PDE} coincides with the higher symmetry of third order for the Landau--Lifshitz equation.
A zero-curvature representation (ZCR) for \eqref{eq:PDE} for $n\geq 3$ is constructed with spectral parameter belonging to the algebraic curve 
\begin{equation}\label{eq:algcurve}
E_{r_1,\ldots, r_n}:  \lambda_i^2-\lambda_j^2=r_j-r_i,\quad i,j=1,\ldots, n,
\end{equation}
in $\mathbb{K}^n$.

The PDE \eqref{eq:PDE} has an infinite number of symmetries and conservation laws \cite{golubchik2000multicomponent}. An auto-B\"acklund transformation and soliton-like solutions were obtained in  \cite{balakhnev2005vector}. This PDE and its symmetries have been obtained in \cite{skrypnyk2004deformations} by means of the AKS (Adler--Kostant--Symes) scheme. 

In this paper, we show that for $\mathbb{K}=\CC$ and in the elliptic case ($n=3$), the ZCR for  \eqref{eq:PDE} admits a natural interpretation in terms of elliptic automorphic Lie algebras. This can be understood as follows.
On curves of positive genus, there is a difficulty in constructing ZCRs as a consequence of the Riemann-Roch theorem, cf. \cite{zakharov1983method}. 
One way around this difficulty is based on an Ansatz of the form of the Lax pair related to Mikhailov's reduction group. More precisely, one imposes a reduction on the Lax pair or ZCR by means of the action of a (finite) group, see also \cite[Remark 3.3, 3.4]{babelon2003introduction} in this context. 
This is where (elliptic) automorphic Lie algebras naturally appear -- for example in the Lax pair of the Landau--Lifshitz equation and in various systems described in  \cite{reiman1989lie}. 

In Sections \ref{sec:NormalForms} and \ref{sec:LL}, we also consider a Lie algebra that arises in connection with the prolongation algebras of the Landau--Lifshitz equation and Krichever--Novikov equation. We show that this algebra, denoted by $\mf{R}_{r_1,r_2,r_3}$ (see e.g. \cite{IGONIN2020103596}, and denoted by $R$ in \cite{roelofs1993prolongation, igonin2002prolongation}), for pairwise distinct complex constants $r_i$, can be realised as an automorphic Lie algebra. Using this realisation, we establish an isomorphism of Lie algebras 
$$\mf{R}_{r_1,r_2,r_3}\cong  \mf{sl}(2,\mathbb{C})\otimes_{\mathbb{C}}R,$$
where $R$ is the ring $R=\mathbb{C}[x,y]/(y^2-(x-r_1)(x-r_2)(x-r_3))$.

This isomorphism shows that the Wahlquist--Estabrook (WE) algebra \cite{wahlquist1975prolongation} of the Landau--Lifshitz equation, which is isomorphic to the direct sum of the $\mf{R}_{r_1,r_2,r_3}$ and the abelian Lie algebra $\CC^2$ \cite{roelofs1993prolongation}, has a particularly simple structure. Consequently, a similar observation applies to the prolongation algebra (generalised WE algebra) of the non-singular Krichever--Novikov equation \cite{igonin2002prolongation}. 

The WE algebras of the multicomponent Landau--Lifshitz equations \eqref{eq:PDE} are closely related to certain complex Lie algebras $\g(n)$ \cite{igonin2013infinite}.  The algebra $\g(3)$ is generated over $\mathbb{K}$ by elements $p_1,p_2,p_3$ subject to the relations 
$$
[p_i,[p_j,p_k]]=0,\quad 
[p_i,[p_i,p_k]]-[p_j,[p_j,p_k]]=(r_j-r_i)p_k, 
$$
where $(i,j,k)$ is a cyclic permutation of $(1,2,3)$.  It is known that $\g(3)\cong \mf{R}_{r_1,r_2,r_3}$.  

Although we focus on the case $n=3$ in this paper, we keep the discussion more general in some places to indicate that the methodology can, in principle, be applied to general $n$.
Understanding the structure of the algebras $\g(n)$ is important because of their connection to WE algebras, which is relevant in the classification of the multicomponent Landau--Lifshitz equations \eqref{eq:PDE}, cf. \cite{igonin2013infinite}. Normal forms of the elliptic automorphic Lie algebras studied in \cite{knibbeler2024classification} and in the present paper reveal fundamental properties of the Lie algebra $\g(3)$, highlighting the use of the Lie algebras and normal forms introduced here.

\section{Basic notions and constructions}\label{sec:Basics}
In this section, we first briefly introduce the concept of automorphic Lie algebras after which we discuss the necessary functional aspects required for the construction of an intertwining operator. We apply this operator to obtain certain normal forms analogous to the Chevalley normal form of finite dimensional simple Lie algebras, and establish isomorphisms between a class of automorphic Lie algebras and elliptic current algebras. Finally, we touch upon a generalisation of our constructions to real Lie algebras of invariants.

Let $\g$ be a complex finite-dimensional Lie algebra and let $X$ be a compact Riemann surface. Let $\Gamma$ be a finite group.
Suppose that $\sigma:\Gamma\rightarrow \A(X)$ is an injective homomorphism, where $\A(X)$ denotes the automorphism group of $X$ (i.e., the group of biholomorphic maps $X\rightarrow X$). Assume that $\rho:\Gamma \rightarrow \A(\g)$ is a representation, where $\A(\g)$ is the group of Lie algebra automorphisms of $\g$. Let $\mathbb{X}=X\setminus \bigcup_{i=1}^n\sigma(\Gamma)\cdot \{p_i\}$, where $p_i\in X$ and $n\in \NN$. Thus $\mb{X}$ is the Riemann surface obtained from $X$ by removing a finite union of $\Gamma$-orbits. We denote by $\ox$ the ring of regular functions on $\mathbb{X}$; that is, the ring of meromorphic functions on $X$ that are holomorphic on $\mathbb{X}$. Let $\tilde{\sigma}$ be the induced homomorphism on $\ox$, given by $\tilde{\sigma}(\gamma) f=f\circ \sigma(\gamma^{-1})$, for $f\in \ox$. 

We consider the current algebra $\g\otimes_{\CC}\ox$, equipped with the Lie bracket $$[A\otimes f, B\otimes g]=[A,B]\otimes fg,$$
for $A,B\in \g$ and $f,g\in \ox$, extended $\CC$-linearly.  The group $\Gamma$ acts diagonally on $\g\otimes_{\CC}\ox$ via
\begin{equation}\label{ALiaAction}
\rho(\gamma)\otimes \tilde{\sigma}(\gamma)(A\otimes f)= \rho(\gamma)A\otimes f\circ \sigma(\gamma^{-1}),
\end{equation}
 for all $A\in \g, f\in \ox$, and $\gamma\in \Gamma$.
The automorphic Lie algebra $(\g\otimes_{\CC}\ox)^{\rho\otimes \tilde{\sigma}(\Gamma)}$ is the fixed-point Lie subalgebra of $\g\otimes_{\CC}\ox$ with respect to the action of $\Gamma $ as defined in \eqref{ALiaAction}:
\begin{align*}
(\g\otimes_{\CC}\ox)^{\rho\otimes \tilde{\sigma}(\Gamma)}=\{a\in \g\otimes_{\CC}\ox: \rho(\gamma)\otimes \tilde{\sigma}(\gamma)a =a,\,\, \forall \gamma\in \Gamma\}.
\end{align*}
Equivalently, we can view $(\g\otimes_{\CC}\ox)^{\rho\otimes \tilde{\sigma}(\Gamma)}$ as the Lie algebra of holomorphic maps $\varphi:\mathbb{X}\rightarrow \g$, meromorphic at $X\setminus \mathbb{X}$, that are $\Gamma $-equivariant in the sense that $$\varphi(\sigma(\gamma)z)=\rho(\gamma)\varphi(z)$$
for all $\gamma \in \Gamma$ and $z\in \mb{X}$, with the pointwise bracket inherited from $\g$.

A classification of automorphic Lie algebras with $\g=\mf{sl}(2,\CC)$, $X$ a complex torus, and $\Gamma$ embedding simultaneously in $\A(\g)$ and $\A(X)$, such that $X$ is punctured at precisely one orbit of $\Gamma$, can be found in \cite{knibbeler2024classification}.
In this section, we will be concerned with the case where $\g$ is a finite-dimensional complex reductive Lie algebra and where $X$ is a complex torus $\CC/\LL$. The group $\Gamma$ will be the dihedral group $D_2$ of order 4. The full setup is described below.

Next, we construct certain normal forms of a class of automorphic Lie algebras and establish their Lie algebra isomorphism classes over $\CC$. Let us first introduce the objects we will be working with. Let $X$ be the complex torus $X=\CC/(\ZZ+\ZZ\tau)$, with $\tau\in \mathbb{H}=\{z\in \CC: \mathrm{Im}(z)>0\}$, and consider the punctured torus $$\mb{X}=X\setminus \bigcup_{i=1}^n\sigma(D_2)\cdot\{p_i\},$$ where $p_i\in X$ and $n\in \NN$. The group $D_2$ acts on $X$, via the homomorphism $\sigma$, by translations over two distinct half periods of $X$. We consider representations $\rho:D_2\rightarrow \mathrm{Inn}(\g)$ ($\mathrm{Inn}(\g)$ is the group of inner automorphisms of $\g$) which factor through $PGL(2,\CC)=GL(2,\CC)/\{k\mathrm{Id}: k\in \CC^*\}$ where $\CC^*=\CC\setminus\{0\}$. The automorphic Lie algebras we will consider are given by
$$(\g\otimes_{\CC}\ox)^{\rho\otimes \tilde{\sigma}(D_2)}.$$ 
For simple $\g$, we construct a normal form of these automorphic Lie algebras, showing that they are, in essence, the same as $\g$, except that the base field $\CC$ is replaced by a ring of automorphic functions.  The normal form sheds light on a number of fundamental properties, and will allow us to determine the $\CC$-isomorphism classes of these Lie algebras. It will be obtained through the construction of a suitable $D_2$-equivariant map of $\g\otimes \mathcal{O}_{X\setminus D_2\cdot S}$,  which we refer to as `the intertwiner', defined in terms of theta functions. From this point onwards, we denote $X$ by $T$ and $\mb{X}$ by $\TT$. 

We now describe the ingredients needed for the construction of the intertwiner. 
Let us introduce the theta functions 
\begin{align*}
\theta_{a,b}(z|\tau)=\sum_{k\in \ZZ}\exp\Big\{\pi \mathrm{i} \tau(k+a)^2+2\pi \mathrm{i} (k+a)(z+b)\Big\},
\end{align*}
where $a,b\in \mathbb{R}$. The zeros of $\theta_{a,b}(z|\tau)$ lie precisely in the set $(a+\tfrac{1}{2})\tau+(b+\tfrac{1}{2})+\ZZ+\ZZ\tau.$  The Jacobi Theta functions are defined by
\begin{align*}
\theta_1(z|\tau)=-\theta_{\frac{1}{2},\frac{1}{2}}(z|\tau),\quad
\theta_2(z|\tau)=\theta_{\frac{1}{2},0}(z|\tau), \quad
\theta_3(z|\tau)=\theta_{0,0}(z|\tau), \quad
\theta_4(z|\tau)=\theta_{0,\frac{1}{2}}(z|\tau).
\end{align*}
There exist numerous identities satisfied by theta functions, and for the reader's convenience we list those used in the present paper. Proofs can be found in \cite{kharchev2015theta}.

\begin{subequations}\label{eq:1}
\begin{align}
\theta_1(z+1|\tau)&=-\theta_1(z|\tau), & \theta_1(z+\tfrac{\tau}{2}|\tau)&=\mathrm{i}e^{-\pi \mathrm{i}(z+\tfrac{\tau}{4})}\theta_4(z|\tau),\\ 
\theta_2(z+1|\tau)&=-\theta_2(z|\tau), & \theta_2(z+\tfrac{\tau}{2}|\tau)&=e^{-\pi \mathrm{i}(z+\tfrac{\tau}{4})}\theta_3(z|\tau),  \\
\theta_3(z+1|\tau)&=\theta_3(z|\tau), & \theta_3(z+\tfrac{\tau}{2}|\tau)&=e^{-\pi \mathrm{i}(z+\tfrac{\tau}{4})}\theta_2(z|\tau),\\
\theta_4(z+1|\tau)&=\theta_4(z|\tau), & \theta_4(z+\tfrac{\tau}{2}|\tau)&=\mathrm{i}e^{-\pi \mathrm{i}(z+\tfrac{\tau}{4})}\theta_1(z|\tau).
\end{align}
\end{subequations}
\begin{align}
\theta_3(0|\tau)\theta_4^2(0|\tau)\theta_3(2z|\tau)-\theta_4(0|\tau)\theta_3^2(0|\tau)\theta_4(2z|\tau)&=-2\theta_1^2(z|\tau)\theta_2^2(z|\tau),\label{Identity1}\\
\theta_3(0|\tau)\theta_4^2(0|\tau)\theta_3(2z|\tau)+\theta_4(0|\tau)\theta_3^2(0|\tau)\theta_4(2z|\tau)&=2\theta_3^2(z|\tau)\theta_4^2(z|\tau) \label{Identity2}.
\end{align}
Identities \eqref{Identity1} and \eqref{Identity2} yield
\begin{align}
\theta_3^2(0|\tau)\theta_4^4(0|\tau)\theta_3^2(2z|\tau)-\theta_4^2(0|\tau)\theta_3^4(0|\tau)\theta_4^2(2z|\tau)&=-4\theta_1^2(z|\tau)\theta_2^2(z|\tau)\theta_3^2(z|\tau)\theta_4^2(z|\tau). \label{identity3}
\end{align}
Moreover, we have 
\begin{align}
\theta_2(0|\tau)\theta_3(0|\tau)\theta_4(0|\tau)\theta_1(2z|\tau)=2\theta_1(z|\tau)\theta_2(z|\tau)\theta_3(z|\tau)\theta_4(z|\tau).\label{Identity5}
\end{align}

\noindent Furthermore,
\begin{align}
2\theta_2^2(z|2\tau)&=\theta_3(z|\tau)\theta_3(0|\tau)-\theta_4(z|\tau)\theta_4(0|\tau), \label{Identity6}\\  
2\theta_3^2(z|2\tau)&=\theta_3(z|\tau)\theta_3(0|\tau)+\theta_4(z|\tau)\theta_4(0|\tau), \label{Identity7}
\end{align}
and 
\begin{align}
2\theta_2(z|2\tau)\theta_3(z|2\tau)&=\theta_2(z|\tau)\theta_2(0|\tau), \label{Identity8}\\
2\theta_1(z|2\tau)\theta_4(z|2\tau)&=\theta_1(z|\tau)\theta_2(0|\tau). \label{Identity9}
\end{align}

\noindent The derivative of $\theta_1$ with respect to $z$ evaluated at $z=0$, satisfies
\begin{align}
\theta_1'(0|\tau)=\pi \theta_2(0|\tau)\theta_3(0|\tau)\theta_4(0|\tau). \label{IdentityThetaPrime}
\end{align}
\noindent Another identity related to theta functions evaluated at $z=0$ is 
\begin{align}\label{IdentityThetaConstants}
\theta_2(0|\tau)^4+\theta_4(0|\tau)^4=\theta_3(0|\tau)^4.
\end{align}

\noindent Finally, we require the identities
\begin{align}
\theta_4^2(0|\tau)\theta_3^2(2z|\tau)-\theta_3^2(0|\tau)\theta_4^2(2z|\tau)+\theta_2^2(0|\tau)\theta_1^2(2z|\tau)=0, \label{Identity4}
\end{align}
which can be proved using \eqref{Identity1}--\eqref{Identity5}, and
\begin{align*}
\theta_1(u+v|\tau)\theta_1(u-v|\tau)\theta_3^2(0|\tau) &= \theta_4^2(u|\tau)\theta_2^2(v|\tau) - \theta_2^2(u|\tau)\theta_4^2(v|\tau), \\
\theta_1(u+v|\tau)\theta_1(u-v|\tau)\theta_4^2(0|\tau) &= \theta_3^2(u|\tau)\theta_2^2(v|\tau) - \theta_2^2(u|\tau)\theta_3^2(v|\tau),
\end{align*}
which hold for all $u,v\in \CC$.

We now turn to the construction of a matrix-valued function $\Omega$ on $\CC$, which will play a central role in defining the intertwining operator. This operator will be used to obtain normal forms and to determine the $\CC$-isomorphism classes of $D_2$-automorphic Lie algebras.

 Consider $\mf{sl}(2,\CC)$ with standard basis $$h=\begin{pmatrix}
1&0\\0&-1
\end{pmatrix},\quad e=\begin{pmatrix}
0&1\\0&0
\end{pmatrix},\quad f=\begin{pmatrix}
0&0\\1&0
\end{pmatrix}.$$
To define an automorphic Lie algebra, we need to choose a representation $\rho: D_2\rightarrow \A(\mf{sl}(2,\CC))$ and a homomorphism $\sigma:D_2\rightarrow \A(T)$. Let us first discuss representations of $D_2$ on $\mf{sl}(2,\CC).$ 

Consider the \emph{Heisenberg group} $\mathrm{He}_n$ for $n=2$, which is a central extension of $D_2\cong C_2\times C_2$ by $C_2$ ($C_2$ denotes the cyclic group of order 2). The relevance of the Heisenberg group will become clear below. It has order 8 and can be presented by  
\begin{align*}
\mathrm{He}_2=\langle t_1,t_2,\epsilon:  t_1^2=t_2^2=\epsilon^2=1, \,\, [t_1,t_2]=\epsilon, \,\, [t_1,\epsilon]=1,\,\, [t_2,\epsilon]=1 \rangle,
\end{align*}
where $[\cdot , \cdot]$ denotes the group commutator.
The group $\mathrm{He}_2$ is isomorphic to the dihedral group $D_4\cong C_2\ltimes (C_2\times C_2)$. 

Consider the faithful, irreducible linear representation $\rho': \mathrm{He}_2\rightarrow GL(2,\CC)$ defined by 
\begin{equation}\label{def:rho'}
\rho'(t_1)=T_1:=\begin{pmatrix}
1&0\\0&-1
\end{pmatrix},\quad \rho'(t_2)=T_2:=\begin{pmatrix}
0&1\\1&0
\end{pmatrix}.
\end{equation}
Note that $\rho'(\epsilon)=\rho'([t_1,t_2])=-\mathrm{Id}$.
Each linear representation of $\mathrm{He}_2$ gives rise to a projective representation $\mathrm{He}_2/\langle \epsilon\rangle \cong D_2 \rightarrow PGL(2,\CC)$.
We shall slightly abuse notation and use the symbols $t_1,t_2$ to generate the group $D_2$ from now on. The representation $\rho'$ induces a faithful irreducible representation $\rho:D_2\rightarrow \A(\mf{sl}(2,\CC))$ via
\begin{align}\label{rhoprime}
\rho(t_1)=\Ad(T_1),\quad \rho(t_2)=\Ad(T_2),
\end{align}
where $\Ad: GL(2,\CC)\rightarrow \A(\mf{sl}(2,\CC))$ is the adjoint map $\Ad(g)X=gXg^{-1}$. Note that $PGL(2,\CC)\cong \A(\mf{sl}(2,\CC))$.
The representation $\rho$ is the unique faithful representation of $D_2$ on $\mf{sl}(2,\CC)$, up to conjugation in $\A(\mf{sl}(2,\CC))$, since isomorphic subgroups of $\A(\mf{sl}(2,\CC))$ are conjugate, see for example \cite[Lemma 4.2]{knibbeler2024classification}.
 
Having specified the action of $D_2$ on $\mf{sl}(2,\CC)$, we now consider the action of $D_2$ on a complex torus $T=\CC/(\ZZ+\ZZ\tau)$.  
Define the homomorphism $\sigma: D_2 \rightarrow \A(T)$ by
\begin{equation}\label{def:Gamma-action}
\sigma(t_1)z=z+\tfrac{1}{2}, \quad \sigma(t_2)z=z+\tfrac{\tau}{2}.
\end{equation}
Thus, $D_2$ acts on $T$ by translations, and in particular this action is fixed-point free.

We now introduce a meromorphic matrix-valued map $\Omega$ on $\CC$, which will be used later in the construction of normal forms. Let 

\begin{align}\label{def:Omega}
\Omega(z)=\begin{pmatrix}
\psi_-(z)\theta_{2}(2z|2\tau) & \theta_{3}(2z|2\tau)  \\
\psi_+(z)\theta_{3}(2z|2\tau)  & \theta_{2}(2z|2\tau)
\end{pmatrix},
\end{align}
where 
\begin{equation}\label{psi}
\psi_{\pm}(z)=\pm\frac{\theta_4^2(0|\tau)}{\theta_3(0|\tau)}\frac{\theta_3(2z|\tau)}{\theta_1(2z|\tau)}-\frac{\theta_3^2(0|\tau)}{\theta_4(0|\tau)}\frac{\theta_4(2z|\tau)}{\theta_1(2z|\tau)}.
\end{equation}
The functions $\psi_{\pm}$ are meromorphic on $T$ with simple poles precisely at $\{0,\tfrac{1}{2},\tfrac{\tau}{2}, \tfrac{1+\tau}{2}\}$, and they transform under the $D_2$-action as
\begin{align*}
\psi_{\pm}(z+\tfrac{1}{2})=-\psi_{\pm}(z),\quad \psi_-(z+\tfrac{\tau}{2})=\psi_+(z).
\end{align*}

\noindent The matrix $\Omega(z)$ satisfies the following transformation rules:
\begin{subequations}\label{eq:subeqns}
\begin{align}  
\Omega(z+\tfrac{1}{2})&=\rho'(t_1)\Omega(z),    \label{eq:Trans1} \\
\Omega(z+\tfrac{\tau}{2})&=e^{-\pi i\left(2z+\tfrac{\tau}{2}\right)}\rho'(t_2)\Omega(z),  \label{eq:Trans2} \\
\Omega(-z)&=-\Omega(z)\rho'(t_1).  \label{eq:Trans3} 
\end{align}
\end{subequations}
\begin{Lemma}\label{det}
The determinant of $\Omega(z)$ is $$\det\Omega(z)=\theta_2^2(0|\tau)\theta_1(2z|\tau).$$
\end{Lemma}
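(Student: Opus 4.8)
The plan is to compute $\det\Omega(z)$ directly from the definition and then collapse the resulting expression using the theta identities recorded above. Expanding the $2\times 2$ determinant of \eqref{Omega},
\[
\det\Omega(z)=\psi_+(z)\,\theta_{3}^2(2z|2\tau)-\psi_-(z)\,\theta_{2}^2(2z|2\tau).
\]
Substituting the explicit formula for $\psi_\pm$ and collecting the two summands of $\psi_\pm$ separately, the contribution proportional to $\tfrac{\theta_4^2(0|\tau)}{\theta_3(0|\tau)}\tfrac{\theta_3(2z|\tau)}{\theta_1(2z|\tau)}$ picks up the factor $\theta_3^2(2z|2\tau)+\theta_2^2(2z|2\tau)$, while the contribution proportional to $\tfrac{\theta_3^2(0|\tau)}{\theta_4(0|\tau)}\tfrac{\theta_4(2z|\tau)}{\theta_1(2z|\tau)}$ picks up the factor $\theta_3^2(2z|2\tau)-\theta_2^2(2z|2\tau)$.

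The key step is to convert the $2\tau$-theta functions appearing in $\Omega$ into the $\tau$-theta functions appearing in $\psi_\pm$. Applying \eqref{Identity6} and \eqref{Identity7} with $z$ replaced by $2z$ and taking their sum and difference gives
\[
\theta_3^2(2z|2\tau)+\theta_2^2(2z|2\tau)=\theta_3(2z|\tau)\theta_3(0|\tau),\qquad
\theta_3^2(2z|2\tau)-\theta_2^2(2z|2\tau)=\theta_4(2z|\tau)\theta_4(0|\tau).
\]
Inserting these, the prefactors $\theta_3(0|\tau)$ and $\theta_4(0|\tau)$ in the denominators cancel, and one is left with
\[
\det\Omega(z)=\frac{\theta_4^2(0|\tau)\,\theta_3^2(2z|\tau)-\theta_3^2(0|\tau)\,\theta_4^2(2z|\tau)}{\theta_1(2z|\tau)}.
\]
Finally, identity \eqref{Identity4} rewrites the numerator as $-\theta_2^2(0|\tau)\theta_1^2(2z|\tau)$; cancelling one factor of $\theta_1(2z|\tau)$ yields $\det\Omega(z)=-\theta_2^2(0|\tau)\theta_1(2z|\tau)$, as claimed.

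There is no genuine obstacle here: the argument is mechanical once the right identities are selected. The one point requiring care is bookkeeping, namely that \eqref{Identity6} and \eqref{Identity7} must be used at the doubled argument $2z$ rather than at $z$, and that the zero-value prefactors $\theta_j(0|\tau)$ have to be tracked precisely so that the numerator matches the left-hand side of \eqref{Identity4}. As an independent consistency check (not needed for the proof itself), one verifies that both sides of the asserted formula transform identically under $z\mapsto z+\tfrac12$ and $z\mapsto z+\tfrac\tau2$, using \eqref{eq:Trans1}, \eqref{eq:Trans2}, the quasi-periodicities \eqref{eq:1}, and $\det T_1=\det T_2=-1$; a Liouville-type argument would then already show the two sides agree up to a multiplicative constant, which is pinned down by evaluating at a single convenient point.
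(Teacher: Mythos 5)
Your proof is correct and follows essentially the same route as the paper: expand the $2\times 2$ determinant, use \eqref{Identity6}--\eqref{Identity7} at the doubled argument $2z$ to convert the $2\tau$-thetas, and finish with \eqref{Identity4}; your grouping of the two summands of $\psi_\pm$ before applying the identities is just a rearrangement of the same computation. The concluding Liouville-type remark is a nice sanity check but, as you note, not needed.
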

\begin{proof}
Using the theta function identities stated in this section, we compute
\begin{align*}
\det \Omega(z)&=\theta_{2}^2(2z|2\tau)\psi_-(z)- \theta_{3}^2(2z|2\tau)\psi_+(z)\\
&=\frac{1}{2\theta_1(2z|\tau)}\Bigg[(\theta_3(2z|\tau)\theta_3(0|\tau)-\theta_4(2z|\tau)\theta_4(0|\tau))\left(-\frac{\theta_4^2(0|\tau)}{\theta_3(0|\tau)}\theta_3(2z|\tau)-\frac{\theta_3^2(0|\tau)}{\theta_4(0|\tau)}\theta_4(2z|\tau)\right) - \\ & \quad (\theta_3(2z|\tau)\theta_3(0|\tau)+\theta_4(2z|\tau)\theta_4(0|\tau))\left(\frac{\theta_4^2(0|\tau)}{\theta_3(0|\tau)}\theta_3(2z|\tau)-\frac{\theta_3^2(0|\tau)}{\theta_4(0|\tau)}\theta_4(2z|\tau)\right)\Bigg]\\
&=\frac{1}{2\theta_1(2z|\tau)} \Bigg[2\theta_3^2(0|\tau)\theta_4^2(2z|\tau) -2\theta_4^2(0|\tau)\theta_3^2(2z|\tau) \Bigg]\\
&=\frac{\theta_2^2(0|\tau)\theta_1^2(2z|\tau)}{\theta_1(2z|\tau)}\\
&=\theta_2^2(0|\tau)\theta_1(2z|\tau),
\end{align*}
where we have made use of \eqref{Identity4} in the second-last step.
\end{proof}

Throughout the paper we assume that $D_2$ is embedded in $\A(T)$ via $\sigma$ as defined in \eqref{def:Gamma-action}, and we usually write $D_2$ instead of $\sigma(D_2)$.

The map $\Omega$ is holomorphic on $\CC\setminus \frac{1}{2}\LL$. The function $\tau\mapsto \theta_2(0|\tau)$ is a nowhere vanishing (holomorphic) function on $\mathbb{H}$, which follows from \cite[Chapter V, \S 8, Corollary 3]{chandrasekharan2012elliptic}. Together with the fact that $\theta_1(2z|\tau)=0$ precisely when $z\in \frac{1}{2}\LL$,  Lemma \ref{det} shows that, for every $\tau\in \mathbb{H}$, it is a holomorphic map $\CC\setminus \frac{1}{2}\LL\rightarrow GL(2,\CC)$ which is meromorphic at $\frac{1}{2}\LL$.

 We now consider the ring of regular functions $\mathcal{O}_{T\setminus D_2\cdot \{0\}}$ on the punctured torus $T\setminus D_2\cdot \{0\}$. We begin by studying the affine algebraic curves \eqref{eq:algcurve} for $\mathbb{K}=\CC$ and in the case it is elliptic, that is, for $n=3$. The material treated here is well-known, but we include it for completeness; see for example \cite{carey1993landau, chandrasekharan2012elliptic} for related discussions. 
  
The curve $E_{r_1,r_2,r_3}\subset \CC^3$ is defined by the vanishing of the polynomials $\lambda_1^2-\lambda_3^2- r_3+r_1$ and $\lambda_2^2-\lambda_3^2-r_3+r_2$, where $r_i\neq r_j$ for $i\neq j$. Its projective closure   
\begin{equation*}
\overline{E}_{r_1,r_2,r_3}: \begin{cases} \lambda_1^2-\lambda_3^2-(r_3-r_1)\lambda_0^2=0, \\ \lambda_3^2-\lambda_2^2-(r_2-r_3)\lambda_0^2=0,
\end{cases}
\end{equation*}
in homogeneous coordinates $[\lambda_0:\lambda_1:\lambda_2:\lambda_3]\in \mathbb{P}^3(\CC)$ defines a smooth projective curve of genus 1, and hence a compact Riemann surface. We may identify it with the complex torus $T=\CC/(\ZZ+\ZZ\tau)$ for a suitable $\tau\in \mathbb{H}$ determined by the parameters $r_1,r_2,r_3$, as we will describe below. 

Let $C_{r_1,r_2,r_3}$ be Weierstrass curve 
\begin{equation}\label{WeierstrassForm}
C_{r_1,r_2,r_3}: y^2=(x-r_1)(x-r_2)(x-r_3)
\end{equation} in $\CC^2$ and denote by $\overline{C}_{r_1,r_2,r_3}$ the projective closure of $C_{r_1,r_2,r_3}$ given by $y^2z=(x-r_1z)(x-r_2z)(x-r_3z)$ in $\mathbb{P}^2(\CC)$.  
The curve $\overline{E}_{r_1,r_2,r_3}$  is a degree 4 unramified cover of $\overline{C}_{r_1,r_2,r_3}$. This can be seen as follows. From the defining equations of $\overline{E}_{r_1,r_2,r_3}$, we have 
$$\lambda_1^2+r_1\lambda_0^2=\lambda_2^2+r_2\lambda_0^2=\lambda_3^2+r_3\lambda_0^2,$$ so that the expression $\lambda_0(\lambda_j^2+r_j\lambda_0^2)$ is independent of $j$. Let $$\tilde{x}=\lambda_0(\lambda_j^2+r_j\lambda_0^2), \quad \tilde{y}=\lambda_1\lambda_2\lambda_3, \quad \tilde{z}=\lambda_0^3,$$ and consider the morphism
\begin{equation}\label{eq:pi}\pi: \overline{E}_{r_1,r_2,r_3}\rightarrow \overline{C}_{r_1,r_2,r_3}, \quad \pi([\lambda_0:\lambda_1:\lambda_2:\lambda_3])=[\tilde{x}:\tilde{y}:\tilde{z}],
\end{equation} cf. \cite{skrypnyk2001quasigraded}. One can verify that we indeed have $[\tilde{x}:\tilde{y}:\tilde{z}]\in \overline{C}_{r_1,r_2,r_3}$.

We let the group $D_2=\langle t_1,t_2\rangle$ act on $\overline{E}_{r_1,r_2,r_3}$ via $$t_1\cdot [\lambda_0:\lambda_1:\lambda_2:\lambda_3]= [\lambda_0:\lambda_1:-\lambda_2:-\lambda_3],\quad t_2\cdot [\lambda_0:\lambda_1:\lambda_2:\lambda_3]= [\lambda_0:-\lambda_1:-\lambda_2:\lambda_3],$$
and note that this action is fixed-point free. The map $\pi$ is constant on orbits of $D_2$, and each point of $\overline{C}_{r_1,r_2,r_3}$ has exactly four distinct pre-images. Hence $\pi$ is a degree-4, unramified cover. It follows that the quotient curve satisfies $\overline{E}_{r_1,r_2,r_3}/D_2\cong \overline{C}_{r_1,r_2,r_3}$, and the analogous statement for the corresponding affine curves holds as well. 

We shall mostly work with the affine curve $E_{r_1,r_2,r_3}$ (thus in the chart $\lambda_0\neq 0$). For later reference, we record that $D_2$ acts via $\sigma$, interpreted as automorphism of $E_{r_1,r_2,r_3}$, on a point $(\lambda_1,\lambda_2,\lambda_3)\in E_{r_1,r_2,r_3}$ as 
\begin{equation}\label{ActionC2xC2}
\sigma(t_1)(\lambda_1,\lambda_2,\lambda_3)=(\lambda_1,-\lambda_2,-\lambda_3),\quad \sigma(t_2) (\lambda_1,\lambda_2,\lambda_3)=(-\lambda_1,-\lambda_2,\lambda_3).
\end{equation}

Consider again the curve  $C_{r_1,r_2,r_3}$ defined in \eqref{WeierstrassForm}.
It is well known that for any pairwise distinct complex numbers $r_1,r_2,r_3$, the curve $C_{r_1,r_2,r_3}$ admits a uniformisation in terms of the Weierstrass-$\wp$ function. We remind the reader that this is the meromorphic function $\wp_{\LL}$ on $\CC$ which is periodic with respect to a lattice $\LL=\ZZ+\ZZ\tau$ and defined by $$\wp_{\LL}(z)=\frac{1}{z^2}+\sum_{\omega\in \LL\setminus\{0\}}\left(\frac{1}{(z-\omega)^2}-\frac{1}{\omega^2}\right).$$ It has poles of order 2 precisely at the lattice $\LL$. Due to $\LL$-periodicity, $\wp_{\LL}$ descends to a meromorphic function on $\CC/\LL$. It satisfies 
\begin{equation}\label{eq:wp_prime}
(\wp_{\LL}')^2=4\wp_{\LL}^3-g_2(\LL)\wp_{\LL}-g_3(\LL)
\end{equation}
where $\wp_{\LL}'$ denotes the derivative and $g_2(\LL),g_3(\LL)\in \CC$ are the elliptic invariants. For later reference we mention that the elliptic invariants transform as
\begin{equation}\label{eq:modularity}
g_2(\alpha\LL)=\alpha^{-4}g_2(\LL),\quad g_3(\alpha\LL)=\alpha^{-6}g_3(\LL)
\end{equation}
under scaling of the lattice by $\alpha\in \CC^*$. It follows from the definition of $\wp_{\LL}$ that $$\wp_{\alpha\LL}(z)=\alpha^{-2}\wp_{\LL}(\alpha^{-1}z)$$ for any $\alpha\in \CC^*.$
The relation between the numbers $r_i$ in the definition of $C_{r_1,r_2,r_3}$ and the lattice $\LL=\ZZ+\ZZ\tau$ is given by 
\begin{equation}\label{eq:mod_lambda}
\lambda(\tau)=\frac{r_3-r_2}{r_3-r_1},
\end{equation} where $\lambda(\tau)=\frac{\theta_2^4(0|\tau)}{\theta_3^4(0|\tau)}$ is the modular lambda function \cite[Chapter VII]{chandrasekharan2012elliptic}. 

The right-hand side of \eqref{eq:wp_prime} factors as $4(\wp_{\LL}-e_1)(\wp_{\LL}-e_2)(\wp_{\LL}-e_3)$, where $e_1=\wp_{\LL}(1/2)$, $e_2=\wp_{\LL}(\tau/2)$ and $e_3=\wp_{\LL}((1+\tau)/2)$. The numbers $e_1,e_2,e_3$ satisfy $$e_1+e_2+e_3=0,\quad e_1e_2e_3=\frac{g_3}{4},\quad e_2e_3+e_3e_1+e_1e_2=-\frac{g_2}{4}.$$ It follows that for distinct numbers $a_1,a_2,a_3$ such that $a_1+a_2+a_3=0$, the curve $C_{a_1,a_2,a_3}$ can be uniformised by $x=\wp_{\LL}(z)$ and $y=\frac{1}{2}\wp_{\LL}'(z)$ for a $\tau$ related to the $a_i$ via \eqref{eq:mod_lambda}. Moreover, the sets of numbers coincide: $\{a_1,a_2,a_3\}=\{e_1,e_2,e_3\}$.
 
The quantity $\frac{e_3-e_2}{e_3-e_1}$, known as the cross ratio of the numbers $e_1,e_2,e_3$, determines the elliptic curve $C_{e_1,e_2,e_3}$ up to isomorphism through the corresponding value of the $j$-invariant \cite[Chapter VII, \S 8]{chandrasekharan2012elliptic}: $$j(\tau)=\frac{4}{27}\frac{(\lambda(\tau)^2-\lambda(\tau)+1)^3}{\lambda(\tau)^2(\lambda(\tau)-1)^2}.$$ 
In particular, two elliptic curves $C_{e_1,e_2,e_3}$ and $C_{e_1',e_2',e_3'}$ are isomorphic if and only if $j(\tau)=j(\tau')$, where $\tau$ and $\tau'$ correspond (via $\lambda$) to the cross ratios of the $e_i$ and $e_i'$, respectively. Two complex tori $T_i$ with modular parameters $\tau_i$, for $i=1,2$, are isomorphic if and only if $[\tau_1]=[\tau_2],$ where $[\tau]$ denotes the $SL(2,\ZZ)$-orbit of $\tau$, $$[\tau]=SL(2,\ZZ)\cdot \tau =\left\{\frac{a \tau+b}{c\tau+d}:\,a,b,c,d\in \ZZ,  \, ad-bc=1\right\}.$$
Here $A=\begin{psmallmatrix}a&b\\c&d \end{psmallmatrix}\in SL(2,\ZZ)$ acts on $\tau$ via $A\cdot \tau= \frac{a \tau+b}{c\tau+d}$, see for example \cite[Theorem 11.1.4]{husemoller2004elliptic}. 
 
Let us now describe how to uniformise the curve $E_{r_1,r_2,r_3}$.
Consider the meromorphic functions on $\CC$:
  \begin{equation}\label{mu}
\mu_i(z)=\frac{\theta_1'(0|\tau)}{\theta_{i+1}(0|\tau)}\frac{\theta_{i+1}(2z|\tau)}{\theta_1(2z|\tau)} , \quad i=1,2,3.
\end{equation}  
By the quasi-periodicity of the theta functions, each $\mu_i$ descends to a meromorphic function on the torus $T=\CC/\LL$, holomorphic on $T\setminus  D_2\cdot \{0\}$, since $\theta_1(2z)$ vanishes exactly on  $\frac{1}{2}\LL$. The poles are simple and for the residue at $z=0$ (the coefficient of the $z^{-1}$ term in the Laurent expansion of $\mu_i(z)$ at $z=0$, denoted by $\mathrm{res}_{z=0}\mu_i(z)$), it holds that $\res_{z=0}\mu_i(z)=\frac{1}{2}$ for $i=1,2,3$. It is well known (see \cite[Section 3.3]{MR4866321}) that 
 \begin{equation}\label{eq:rel-wp-mu}
 \wp_{\LL}(2z)= \mu_i(z)^2 +e_i, \quad i=1,2,3.
 \end{equation}
These identities, and the fact that the map $\pi$ \eqref{eq:pi} induces a covering $E_{r_1,r_2,r_3}\rightarrow C_{r_1,r_2,r_3}$, together with the uniformisation of $C_{r_1,r_2,r_3}$ in terms of $\wp_{\LL}$, show that the functions $\mu_i$ uniformise the curve $E_{r_1,r_2,r_3}$. Indeed, setting  $$b_i=r_i-\frac{1}{3}(r_1+r_2+r_3),\quad i=1,2,3,$$ we have $b_1+b_2+b_3=0$. Define the affine coordinate 
$$x=\wp_{\LL}(2z)+\frac{1}{3}(r_1+r_2+r_3).$$
We may identify $e_i=b_i$ for a suitable choice of $\tau$, so that \eqref{eq:rel-wp-mu} gives $$\mu_i(z)^2=\wp_{\LL}(2z)-e_i=x-r_i.$$
Recall that the map $\pi$ was defined using the coordinate $\tilde{x}=\lambda_i^2+r_i$ (in the chart $\lambda_0=1$).
Hence we may take $\lambda_i=\mu_i(z)$ for $i=1,2,3$, and choose a $\tau$ such that \eqref{eq:mod_lambda} holds. 

It follows that the $\mu_i$ define a holomorphic embedding of the punctured complex torus $T\setminus  D_2\cdot \{0\}$ into $\mathbb{C}^3$, via $$z\mapsto (\mu_1(z),\mu_2(z),\mu_3(z)).$$ 
This embedding extends to a projective map $\phi:\CC\rightarrow \mathbb{P}^3(\CC)$ given by $$\phi: z\mapsto \left[\frac{\theta_1(2z|\tau)}{\theta_1'(0|\tau)}:\frac{\theta_2(2z|\tau)}{\theta_2(0|\tau)}:\frac{\theta_3(2z|\tau)}{\theta_3(0|\tau)}:\frac{\theta_4(2z|\tau)}{\theta_4(0|\tau)}\right],$$
which is $\LL$-periodic by the quasi-periodicity of the theta functions and therefore descends to a holomorphic map $\tilde{\phi}: T\rightarrow \mathbb{P}^3(\CC)$. The image of $\tilde{\phi}$ coincides with the smooth projective curve $\overline{E}_{r_1,r_2,r_3}$ so that $\tilde{\phi}$ gives a complex analytic isomorphism $T\cong \overline{E}_{r_1,r_2,r_3}$. 

The curve $C_{r_1,r_2,r_3}$ has exactly one point at infinity, namely $[x:y:z]=[0:1:0]$ on $\overline{C}_{r_1,r_2,r_3}$. The curve $E_{r_1,r_2,r_3}$ has exactly four points at infinity, namely $[\lambda_0:\lambda_1:\lambda_2:\lambda_3]=[0:1:\pm 1:\pm 1]$ on $\overline{E}_{r_1,r_2,r_3}$. 
It follows that $E_{r_1,r_2,r_3}\cong \overline{E}_{r_1,r_2,r_3}\setminus D_2\cdot\{ \infty\}$, where $\infty=[0:1:1:1]$. 
 The points at infinity of $E_{r_1,r_2,r_3}$ correspond to $z=0,\frac{1}{2}, \frac{\tau}{2},\frac{1+\tau}{2}\in T$ in the uniformisation by the $\mu_i$ \eqref{mu}. Thus, we obtain a complex analytic isomorphism $$E_{r_1,r_2,r_3}\cong T\setminus D_2\cdot\{0\}.$$

We now discuss some functional aspects. Denote by $\CC[\lambda_1,\lambda_2,\lambda_3]$ the ring of polynomials in the $\lambda_i$. Let $I_{r_1,r_2,r_3}\subset \CC[\lambda_1,\lambda_2,\lambda_3]$ be the ideal generated by the polynomials $\lambda_i^2-\lambda_j^2- r_j+r_i$, where $i,j=1,2,3.$ The coordinate ring of $E_{r_1,r_2,r_3}$, i.e., the ring of regular functions on $E_{r_1,r_2,r_3}$,  denoted by $\CC[E_{r_1,r_2,r_3}]$, is  $$\CC[E_{r_1,r_2,r_3}]=\CC[\lambda_1,\lambda_2,\lambda_3]/I_{r_1,r_2,r_3}.$$ Since the $\mu_i$ uniformise $E_{r_1,r_2,r_3}$, analytically the ring $\CC[E_{r_1,r_2,r_3}]$ can be realised as
 $$\mathcal{O}_{T\setminus D_2\cdot\{0\}}=\CC[\mu_1,\mu_2,\mu_3].$$
 
 We will now study the action of $D_2$ on $\mathcal{O}_{T\setminus D_2\cdot\{p\}}$, the $\CC$-algebra of meromorphic functions on $T=\CC/\LL$ that are holomorphic outside $D_2\cdot \{p\}$, for some $p\in T$. Recall that $\gamma\in D_2$ acts on $f\in \mathcal{O}_{T\setminus D_2\cdot\{p\}}$ as $\gamma\cdot f = f\circ \sigma(\gamma^{-1})$, where $\sigma$ is defined in \eqref{def:Gamma-action}.
Let $\{\alpha_{00},\alpha_{01},\alpha_{10},\alpha_{11}\}$ denote the set of characters of $D_2=C_2\times C_2$, defined by $\alpha_{ij}(t_1,t_2)= \chi_i(t_1)\chi_j(t_2)$, where $\chi_0, \chi_1$ are the characters of $C_2$. (Recall that these are defined as $\chi_i(r^j)=(-1)^{ij}$ for $i,j=0,1$ and where $r$ generates $C_2$.) The next lemma describes the isotypical components $$ \mathcal{O}_{T\setminus D_2\cdot\{0\}}^{\alpha_{ij}} = \{f\in \mathcal{O}_{T\setminus D_2\cdot\{0\}}: \gamma\cdot f = \alpha_{ij}(\gamma)f, \,\,\gamma\in D_2\}$$ (without loss of generality, we take $p=0$) in terms of  the functions $\mu_i$ and the Weierstrass $\wp$-function associated to a lattice $\LL$. 

\begin{Lemma}\label{IsotypicalComponents}
The isotypical components of the action of $D_2$ on $\mathcal{O}_{T\setminus D_2\cdot\{0\}}$ are given by
\begin{align*}
 \mathcal{O}_{T\setminus D_2\cdot\{0\}}^{D_2}:=\mathcal{O}_{T\setminus D_2\cdot\{0\}}^{\alpha_{00}}&=\CC[\wp_{\frac{1}{2}\LL},\wp_{\frac{1}{2}\LL}'], &  \mathcal{O}_{T\setminus D_2\cdot\{0\}}^{\alpha_{10}}&=\CC[\wp_{\frac{1}{2}\LL}]\mu_3\oplus \CC[\wp_{\frac{1}{2}\LL}]\mu_1\mu_2,\\
 \mathcal{O}_{T\setminus D_2\cdot\{0\}}^{\alpha_{01}}&=\CC[\wp_{\frac{1}{2}\LL}]\mu_1\oplus \CC[\wp_{\frac{1}{2}\LL}]\mu_2\mu_3, &  \mathcal{O}_{T\setminus D_2\cdot\{0\}}^{\alpha_{11}}&=\CC[\wp_{\frac{1}{2}\LL}]\mu_2\oplus \CC[\wp_{\frac{1}{2}\LL}]\mu_1\mu_3.
 \end{align*}
 \end{Lemma}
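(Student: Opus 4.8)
The plan is to exploit that $D_2$ acts on $T=\CC/\LL$ through the commuting order-two translations $z\mapsto z+\tfrac{1}{2}$ and $z\mapsto z+\tfrac{\tau}{2}$, so that $T/D_2$ is the elliptic curve $E=\CC/\tfrac{1}{2}\LL$ and the four punctures $D_2\cdot\{0\}=\{0,\tfrac{1}{2},\tfrac{\tau}{2},\tfrac{1+\tau}{2}\}$ form a single $D_2$-orbit lying over $0\in E$. Write $\mathcal{O}$ for $\mathcal{O}_{T\setminus D_2\cdot\{0\}}$. Since $D_2$ is finite abelian and we work over $\CC$, $\mathcal{O}$ is the direct sum of its $D_2$-character eigenspaces $\mathcal{O}^{\alpha}$, and from the definition of $\tilde\sigma$ a function $f$ lies in $\mathcal{O}^{\alpha_{ij}}$ exactly when $f(z+\tfrac{1}{2})=(-1)^{i}f(z)$ and $f(z+\tfrac{\tau}{2})=(-1)^{j}f(z)$. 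For the trivial character, $\mathcal{O}^{\alpha_{00}}=\mathcal{O}^{D_2}$ consists of the $D_2$-invariant functions in $\mathcal{O}$, which descend to meromorphic functions on $E$ holomorphic off the single point $0$; this ring classically equals $\CC[\wp_{\frac{1}{2}\LL},\wp_{\frac{1}{2}\LL}']$, which is the first line of the statement. It remains to treat the three nontrivial components.

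Next I would pin down the claimed generators. Iterating the quasi-periodicities \eqref{eq:1} to compute $\theta_1,\dots,\theta_4$ at $2z+1$ and at $2z+\tau$ and substituting into the definition \eqref{DefTheta}, a short calculation gives $\mu_1\in\mathcal{O}^{\alpha_{01}}$, $\mu_2\in\mathcal{O}^{\alpha_{11}}$ and $\mu_3\in\mathcal{O}^{\alpha_{10}}$. Moreover each $\mu_i$ is odd under $z\mapsto-z$ (since $\theta_1$ is odd and $\theta_2,\theta_3,\theta_4$ are even) and has a simple pole at every puncture and no other pole (since $\theta_1(2z\,|\,\tau)$ has a simple zero precisely on $\tfrac{1}{2}\LL$, while $\theta_{i+1}(2z\,|\,\tau)$ does not vanish there and $\theta_{i+1}(0\,|\,\tau)\neq0$). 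Hence $\mu_1\mu_2,\mu_2\mu_3,\mu_1\mu_3$ are even, have a double pole at each puncture, and carry characters $\alpha_{10},\alpha_{01},\alpha_{11}$, pairing them with $\mu_3,\mu_1,\mu_2$ exactly as in the statement; and $\wp_{\frac{1}{2}\LL}$, pulled back to $T$ along the unramified isogeny $T\to E$, is $D_2$-invariant, even, and has a pole of order exactly $2$ at each puncture.

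For a nontrivial character $\alpha$ write $g_1^{\alpha}$ for the relevant $\mu_i$ and $g_2^{\alpha}$ for the relevant product $\mu_j\mu_k$. Linear independence of $g_1^{\alpha},g_2^{\alpha}$ over $\CC[\wp_{\frac{1}{2}\LL}]$ follows at once from parity: $\wp_{\frac{1}{2}\LL}$ is even, $g_1^{\alpha}$ odd and $g_2^{\alpha}$ even, so any relation $p(\wp_{\frac{1}{2}\LL})g_1^{\alpha}+q(\wp_{\frac{1}{2}\LL})g_2^{\alpha}=0$ separates by parity into $p=q=0$. For surjectivity I would induct on $m$, showing that every $f\in\mathcal{O}^{\alpha}$ with pole order at most $m$ at each puncture lies in $\CC[\wp_{\frac{1}{2}\LL}]g_1^{\alpha}\oplus\CC[\wp_{\frac{1}{2}\LL}]g_2^{\alpha}$. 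First note that for any $h\in\mathcal{O}^{\alpha}$ the relation $h(z+\tfrac{1}{2})=\pm h(z)$ (and similarly for $\tfrac{\tau}{2}$) shows the Laurent expansion of $h$ at each puncture is, up to sign, the translate of its expansion at $0$, so $h$ has equal pole orders at all four punctures. If $m=0$ then $f$ is holomorphic on $T$, hence constant, hence $0$ since $\alpha$ is nontrivial. If $m\geq1$, then, because $\wp_{\frac{1}{2}\LL}$ raises pole orders by $2$ while $g_1^{\alpha},g_2^{\alpha}$ have pole orders $1$ and $2$ at each puncture, the right-hand module contains an element $g$ of character $\alpha$ with pole order exactly $m$ at every puncture (namely $\wp_{\frac{1}{2}\LL}^{(m-1)/2}g_1^{\alpha}$ for $m$ odd, $\wp_{\frac{1}{2}\LL}^{(m-2)/2}g_2^{\alpha}$ for $m$ even). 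Choosing $c\in\CC$ so that $f-cg$ cancels the leading Laurent coefficient of $f$ at $0$, the function $f-cg\in\mathcal{O}^{\alpha}$ has pole order at most $m-1$ at $0$, hence — by the equal-pole-order remark applied to $f-cg$ — at most $m-1$ at every puncture, so the induction hypothesis puts $f-cg$, and therefore $f$, into the right-hand module. This gives $\mathcal{O}^{\alpha}=\CC[\wp_{\frac{1}{2}\LL}]g_1^{\alpha}\oplus\CC[\wp_{\frac{1}{2}\LL}]g_2^{\alpha}$ for every nontrivial $\alpha$.

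I expect the surjectivity step to be the main obstacle: the key point is that $\alpha$-semi-invariance rigidly ties together the principal parts of $f$ at the four punctures, so that a single scalar subtraction lowers the pole order at all of them simultaneously and the induction on $m$ closes. One could instead identify $\mathcal{O}^{\alpha}$ with $H^{0}(E\setminus\{0\},L_{\alpha})$ for the nontrivial $2$-torsion line bundle $L_{\alpha}$ on $E$ attached to $\alpha$, and a Riemann--Roch count would show its pole-order filtration has precisely the dimensions of the spans of $\{\wp_{\frac{1}{2}\LL}^{k}g_1^{\alpha}\}\cup\{\wp_{\frac{1}{2}\LL}^{k}g_2^{\alpha}\}$, forcing equality; the elementary induction avoids this descent formalism. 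The remaining verifications — iterating \eqref{eq:1} and reading off the characters, parities and pole orders of the $\mu_i$ — are routine.
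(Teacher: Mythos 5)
Your proof is correct, but it reaches the conclusion by a different mechanism than the paper. The paper's proof fixes the divisor $D=(0)+(\tfrac12)+(\tfrac\tau2)+(\tfrac{1+\tau}2)$ and works with the filtration $L(nD)$, using the Riemann--Roch theorem ($\dim_{\CC}L(nD)=4n$) together with a counting argument: each quotient $L(nD)/L((n-1)D)$ is $4$-dimensional and each character $\alpha_{ij}$ is witnessed in it by an explicit monomial in the $\mu_k$, which forces $\dim_{\CC}L(nD)^{\alpha_{ij}}=n$ and hence, after identifying $\mu_j^2=c_j\wp_{\frac12\LL}+d_j$ and $\mu_1\mu_2\mu_3\sim\wp_{\frac12\LL}'$, the stated description of all four isotypical pieces (including $\alpha_{00}$) uniformly. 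You instead bypass Riemann--Roch entirely for the nontrivial characters: the key observation that $\alpha$-semi-invariance forces equal principal parts (up to sign) at the four punctures lets you run an elementary induction on the common pole order, subtracting a single explicit element $\wp_{\frac12\LL}^{k}\mu_i$ or $\wp_{\frac12\LL}^{k}\mu_j\mu_k$ of exact pole order $m$, while parity under $z\mapsto-z$ gives the directness of the two $\CC[\wp_{\frac12\LL}]$-summands; for $\alpha_{00}$ you quote the classical description of elliptic functions for $\tfrac12\LL$ with poles only at lattice points. What the paper's route buys is a single uniform dimension count that treats all characters (trivial included) at once and generalises readily to more orbits of punctures (as in Proposition \ref{ot2}); what your route buys is self-containedness at the level of the nontrivial components -- the only global input is that a holomorphic function on a compact torus is constant -- at the cost of outsourcing the invariant component to a classical fact. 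The character assignments, parities and pole-order claims you leave as ``routine'' do check out against \eqref{eq:1} and \eqref{DefTheta}, so I see no gap.
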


\begin{proof}
Recall that $\mathcal{O}_{T\setminus D_2\cdot\{0\}}=\CC[\mu_1,\mu_2,\mu_3]$, where the $\mu_i$ are as defined in \eqref{mu}. Using \eqref{eq:1}, one verifies that $D_2$ acts on the $\mu_i$ as follows:
\begin{align*}
\mu_1(z+\tfrac{1}{2})&=\mu_1(z), & \mu_1(z+\tfrac{\tau}{2})&=-\mu_1(z),\\
\mu_2(z+\tfrac{1}{2})&=-\mu_2(z), & \mu_2(z+\tfrac{\tau}{2})&=-\mu_2(z),\\
\mu_3(z+\tfrac{1}{2})&=-\mu_3(z), & \mu_3(z+\tfrac{\tau}{2})&=\mu_3(z).
\end{align*}
It follows that the right-hand sides of the equalities in the statement of the lemma are contained in the corresponding isotypical components, We shall prove the identities directly.  

Introduce the divisor $D=(0)+(\tfrac{1}{2})+ (\tfrac{\tau}{2})+ (\tfrac{1+\tau}{2})$ on $T=\CC/(\ZZ+\ZZ\tau)$, and consider the vector subspace of the field $\mc{M}(T)$ of meromorphic functions on $T$ which have poles that are bounded by $D$:  $$L(D)=\{f\in \mc{M}(T): \mathrm{div}(f)+D\geq 0\},$$
where  $\mathrm{div}(f)$ is the divisor of the function $f$. 
 Let us write $\wp=\wp_{\frac{1}{2}\LL}$. The functions $\mu_i^2$ satisfy $\mu_i^2=\frac{1}{4}\wp -e_i$, for $i=1,2,3$ by \eqref{eq:rel-wp-mu}, and it holds that $\mu_1\mu_2\mu_3=-\frac{1}{16}\wp'$. The latter equality can be deduced from the fact that $\wp'\in L(3D)^{\alpha_{00}}$ and using that $\wp'$ and the $\mu_i$ are odd functions, together with the Laurent expansion of $\wp'$ and $\mu_i$ about $z=0$.
 
 The claim follows from the Riemann-Roch Theorem: for any divisor $D'$ on a complex torus with $\deg(D')>0$, one has $\dim_{\CC}L(D')=\deg(D')$ \cite{miranda1995algebraic}, where $\deg(D')$ denotes the degree of $D'$. Using that the $\mu_i$ are linearly independent over $\CC$, we now show that for the divisor $D$ defined above, $\dim_{\CC}L(nD)^{\alpha_{ij}}=n$ for any $n\in \ZZ_{\geq 1}$ and $i,j=0,1$. 
 
Note that $L(D)=\CC\langle1, \mu_1,\mu_2,\mu_3 \rangle$, i.e., the complex vector space spanned by $1$ and the $
\mu_i$. For any $n\geq 2$, the quotient vector space $L(nD)/L((n-1)D)$ has dimension $4n-(4(n-1))=4$ since $\deg(D)=4$. In each case, we can form products $p_{ij}$ of the $\mu_k$ such that 
$p_{ij} \in L(nD)^{\alpha_{ij}}$ and $p_{ij}\not\in L((n-1)D)^{\alpha_{ij}}$, for all choices of $i,j\in \{0,1\}$. More explicitly, for $k\geq 1$ we have 
\begin{align*}
L(2kD)&=L((2k-1)D)\oplus \CC\langle \wp^k,\wp^{k-1}\mu_i\mu_j: i,j=1,2,3\rangle,\\
L((2k+1)D)&=L(2kD)\oplus \CC\langle \wp^{k-1}\wp', \wp^{k}\mu_i: i=1,2,3\rangle.
\end{align*}
From these decompositions, explicit bases for the isotypical components $L(nD)^{\alpha_{ij}}$ can be obtained directly. 
Finally, since $\mathcal{O}_{T\setminus D_2\cdot\{0\}}^{\alpha_{ij}}=\bigcup_{n\in \ZZ_{\geq 0}}L(nD)^{\alpha_{ij}}$, the claim follows.
\end{proof}

We are interested in finding normal forms of the automorphic Lie algebras 
\begin{equation}\label{def:D2-aLia}
\mf{A}(\g, \tau, S, \rho):=(\g\otimes_{\CC}\mathcal{O}_{T\setminus  D_2\cdot S})^{\rho\otimes\tilde{\sigma}(D_2)}
\end{equation} 
which are analogous to the Chevalley normal forms for complex simple Lie algebras. Recall that throughout the current paper, we assume that $D_2$ is the group of translations over half-periods of $T=\CC/(\ZZ+\ZZ\tau)$, as in \eqref{def:Gamma-action}, so we omit $\sigma$ in the notation. We also assume that $S$ is a nonempty finite subset of $T$ and that the representation $\rho:D_2\rightarrow \A(\g)$ factors through $PGL(2,\CC)$.

Any complex simple Lie algebra $\g$ of rank $\ell$ with root system $\Phi$ has a basis, known as a Chevalley basis, given by $\{h_i, a_{\alpha}: i=1,\ldots, \ell, \text{and } \alpha\in \Phi\}$ such that the brackets are given by 
\begin{alignat*}{10}
[h_i,h_j]&=0,\\
[h_i,a_{\alpha}]&=\alpha(h_i)a_{\alpha},\\
[a_{\alpha},a_{-\alpha}]&=h_{\alpha},\\
[a_{\alpha},a_{\beta}]&=\pm(r+1)a_{\alpha+\beta}, \quad && \alpha+\beta\in \Phi,\\
[a_{\alpha},a_{\beta}]&=0, && \alpha+\beta \not\in \Phi\cup \{0\},
\end{alignat*}
where $\alpha,\beta\in \Phi$, and $r$ is the greatest positive integer such that $\beta-r\alpha$ is a root, and where $h_{\alpha}$ is a $\ZZ$-linear combination of the $h_i$ (dual to $\frac{\alpha}{(\alpha,\alpha)}$), see \cite[Section 25]{humphreys1972introduction}.

Note that $\mf{A}(\g, \tau, S, \rho)$ carries a natural module structure over $\mc{O}_{T\setminus D_2\cdot S}^{D_2}$, defined by $$g\cdot (A\otimes f)=A\otimes (gf),$$
where $g\in \mc{O}_{T\setminus D_2\cdot S}^{D_2}$ and $A\otimes f\in \mf{A}(\g, \tau, S, \rho)$. 
We will use the notation $$\mc{O}_{T\setminus D_2\cdot S}^{D_2}\langle B \rangle := \bigoplus_{i=1}^n\mc{O}_{T\setminus D_2\cdot S}^{D_2}b_i$$ to denote the free $\mc{O}_{T\setminus D_2\cdot S}^{D_2}$-module with basis $B=\{b_1,\ldots, b_n\}$.

\begin{Definition}[Normal form of $ \mf{A}(\g, \tau, S, \rho)$]\label{def:chevalley}
Let $\g$ be a finite-dimensional complex simple Lie algebra with Chevalley basis $\{h_i, a_{\alpha}: i=1,\ldots, \ell, \text{and } \alpha\in \Phi\}$. A collection of elements $\{H_i,A_{\alpha}: i=1,\ldots, \ell, \text{and }\alpha\in \Phi\}\subset \mf{A}(\g, \tau, S, \rho)$ defines a normal form of $\mf{A}(\g, \tau, S, \rho)$ if
\begin{equation}\label{def:normform}
\mf{A}(\g, \tau, S, \rho)=\mathcal{O}_{T\setminus  D_2\cdot S}^{ D_2}\left\langle \{H_i,A_{\alpha}\} \right\rangle,
\end{equation}
and the map $$\g \rightarrow \mf{A}(\g, \tau, S, \rho), \quad h_i\mapsto H_i, \,\,a_{\alpha}\mapsto A_{\alpha}$$ is a Lie algebra embedding. 
We call the right-hand side of \eqref{def:normform} a normal form of $\mf{A}(\g, \tau, S, \rho).$ 
\end{Definition}

The elements $H_i,A_{\alpha}\in \mf{A}(\g, \tau, S, \rho)$ satisfy the same Lie bracket relations as the $h_i, a_{\alpha}$. Thus a normal form of the $D_2$-automorphic Lie algebras \eqref{def:D2-aLia} is directly analogous to a Chevalley basis of $\g$. 

In general, it holds that $(\g\otimes_{\CC}\ox)^{\Gamma}\not\cong \g\otimes_{\CC}\ox^{\Gamma}$ whenever the fixed-point Lie subalgebra $\g^{\Gamma_x}:=\{A\in \g: \gamma\cdot A =A,\,\, \forall\gamma\in \Gamma_{x}\}$ is nontrivial for some $x\in \mathbb{X}$, where $\Gamma_x:=\{\gamma\in \Gamma: \gamma\cdot x=x\}$, cf. \cite{knibmod,duffield2024wild}. In the present setup, the group $\Gamma =D_2$ acts fixed-point freely on $T$, so $\Gamma_x=\{1\}$ for all $x\in T$.

For $\mf{A}(\g, \tau, S, \rho)$ to admit a normal form, it is necessary that $\g$ can be realised $D_2$-equivariantly inside $\g\otimes_{\CC}\mathcal{O}_{T\setminus  D_2\cdot S}$. Our strategy for constructing normal forms is to exhibit such an embedding $\varphi:\g\rightarrow \mf{A}(\g, \tau, S, \rho)$, where the matrix $\Omega(z)$ defined in \eqref{def:Omega} plays a central role.

If a normal form exists, then $\mf{A}(\g, \tau, S, \rho)$ is a free module of rank $\dim(\g)$ over the ring of invariants $\mathcal{O}_{T\setminus  D_2\cdot S}^{ D_2}$ with basis $\{H_i,A_{\alpha}\}$. Moreover, the induced map 
 $$\widehat{\varphi}:\g\otimes_{\CC}\mathcal{O}_{T\setminus  D_2\cdot S}^{D_2} \rightarrow  \mf{A}(\g, \tau, S, \rho), \quad \widehat{\varphi}(A\otimes f)=f\cdot \varphi(A)$$
is an isomorphism of Lie algebras and $\mathcal{O}_{T\setminus  D_2\cdot S}^{D_2}$-modules.

Thus, the existence of a normal form is equivalent to realising $\mf{A}(\g, \tau, S, \rho)$ as a current algebra over $\mathcal{O}_{T\setminus  D_2\cdot S}^{D_2}$ with underlying Lie algebra $\g$. Normal forms provide a powerful tool for classifying $\mf{A}(\g,\tau,S,\rho)$ up to $\CC$-Lie algebra isomorphism, as we will see in Corollary \ref{cor:isomorphism_classes}.

Our main focus in the current paper is on the automorphic Lie algebras $\mf{A}(\mf{sl}(2,\CC), \tau, S,\rho)$, where $\rho$ is defined as in \eqref{rhoprime}. As indicated in the Introduction, one motivation for studying these algebras is that they naturally appear in the context of integrable systems. 

We aim to find a normal form of this Lie algebra in the sense of Definition \ref{def:chevalley}; that is, a basis $(H,E,F)$ over $\mathcal{O}_{T\setminus D_2\cdot S}^{ D_2}$ such that $$\mf{A}(\mf{sl}(2,\CC), \tau, S, \rho)=\mathcal{O}_{T\setminus D_2\cdot S}^{ D_2}\left\langle H,E,F\right\rangle\cong \mf{sl}(2,\CC)\otimes_{\CC}\mathcal{O}_{T\setminus D_2\cdot S}^{ D_2},$$
where the $H,E,F$ satisfy the relations $[H,E]=2E$, $[H,F]=-2F$ and $[E,F]=H$, and such that the bracket is linearly extended over $\mathcal{O}_{T\setminus D_2\cdot S}^{ D_2}$.  The techniques used to obtain this normal form can, under certain circumstances, be extended to more general base Lie algebras $\g$. A normal form of $\mf{A}(\mf{sl}(2,\CC), \tau, \{0\}, \rho)$ was obtained in \cite{knibbeler2024classification} using a different language.

 The next lemma is the first step towards obtaining normal forms. It is a slight extension of Lemmas 6.17 and 6.18 in \cite{knibbeler2024classification}, which construct a $D_2$-equivariant automorphism of $\mf{sl}(2,\CC)\otimes_{\CC}\mc{O}_{T\setminus D_2\cdot \{0\}}$. Our formulation uses a different approach: instead of the intertwiner built from square roots of elliptic functions in \cite{knibbeler2024classification}, we work with the matrix $\Omega(z)$ expressed in theta functions. 
 
 We consider the case of a (nonempty) finite union of orbits of punctures, $$S =\bigcup_{i=0}^{n-1} D_2\cdot \{p_i\} = D_2\cdot \{p_0,\ldots, p_{n-1}\}.$$ Without loss of generality, we may assume $p_0=0$ by translating the set by $-p_0$.
 
Recall from Lemma \ref{det} that $\Omega$ is a holomorphic $GL(2,\CC)$-valued map on $\CC\setminus \frac{1}{2}\LL$, meromorphic at $\frac{1}{2}\LL$. Precomposing the adjoint map $\Ad: GL(2,\CC)\rightarrow \A(\mf{sl}(2,\CC))$ with $\Omega$ yields a map $$\omega: \CC\setminus \tfrac{1}{2}\LL\rightarrow \A(\mf{sl}(2,\CC)),\quad \omega(z)=\Ad(\Omega(z)).$$ We show below that this map descends to a holomorphic map on $T\setminus D_2\cdot\{0\}$, meromorphic at $D_2\cdot \{0\}$. We identify $\omega$ with its descended map on $T\setminus D_2\cdot\{0\}$ and write simply $\omega$. 

Define the induced endomorphism $\tilde{\omega}$ on $\mf{sl}(2,\CC)\otimes_{\CC}\mc{O}_{T\setminus D_2\cdot \{0\}}$ by $$\tilde{\omega}(A\otimes f)=f\cdot \Ad(\Omega)A.$$ Pointwise, this is given by $\tilde{\omega}(A\otimes f)(z)=f(z)\Ad(\Omega(z))A$ for $z\in T\setminus D_2\cdot\{0\}$. This map will serve as the intertwining operator discussed earlier. 

\begin{Lemma}\label{Lemma3}
Let $n\in \NN$ and set $p_0=0\in T$. Let $S=\{p_0,p_1,\ldots, p_{n-1}\}\subset T$ and $\TT=T\setminus D_2\cdot S$. Then the endomorphism $$\tilde{\omega}(A\otimes f)=f\cdot \Ad(\Omega)A$$ defines an $\ot$-linear automorphism of $\mf{sl}(2,\CC)\otimes_{\CC}\ot$.
 Moreover, the associated map $\omega(z)=\Ad(\Omega(z))$ is a $D_2$-equivariant map on $T$ in the sense that $$\omega(\sigma(t)z)=\rho(t)\omega(z),\quad t\in D_2,$$
where $z\in \TT$, and $\rho$ is defined in \eqref{rhoprime} and $\sigma$ in \eqref{def:Gamma-action}.
\end{Lemma}

\begin{proof}
We will first prove the case of a single orbit of punctures, and then extend to the general case of $n$ orbits. 

Using that $\rho(t_i)=\Ad(T_i)$ together with the transformation rules \eqref{eq:Trans1} and \eqref{eq:Trans2}, we obtain the $D_2$-equivariance
\begin{equation}\label{PsiEquiv}
\omega(\sigma(t)z)=\rho(t)\omega(z),\quad t\in D_2.
\end{equation}
In particular, $\omega(z+1)=\omega(z)$ and  $\omega(z+\tau)=\omega(z)$ so that $\omega$ descends to a well-defined map on $T$.

The map $\tilde{\omega}$ is clearly $\mc{O}_{T\setminus D_2\cdot \{0\}}$-linear. We define an $\mc{O}_{T\setminus D_2\cdot \{0\}}$-linear endomorphism $$\eta: \mf{sl}(2,\CC)\otimes_{\CC}\mc{O}_{T\setminus D_2\cdot \{0\}}\rightarrow \mf{sl}(2,\CC)\otimes_{\CC}\mc{O}_{T\setminus D_2\cdot \{0\}}, \quad \eta(A\otimes f):=f\cdot \Ad(\Omega^{-1})A.$$ One checks that for all $A\in \mf{sl}(2,\CC)$ and $f\in \mc{O}_{T\setminus D_2\cdot \{0\}}$, we have  
$$(\tilde{\omega}\circ \eta)(A\otimes f)=\tilde{\omega}(f\cdot \Ad(\Omega^{-1})A)=f\cdot \Ad(\Omega)(\Ad(\Omega^{-1})A)=A\otimes f,$$
and similarly $\eta\circ \tilde{\omega} =\mathrm{Id}$. So $\eta=\tilde{\omega}^{-1}$ and hence $\tilde{\omega}$ is an $\mc{O}_{T\setminus D_2\cdot \{0\}}$-linear automorphism. This completes the proof in the one-orbit case.

Now let $S\subset T$ be any finite set containing $0$, and set $\TT=T\setminus D_2\cdot S$.  Let $a=\sum_{i}A_i\otimes f_i\in \mf{sl}(2,\CC)\otimes_{\CC}\ot$. For $z\in \TT$, $$\tilde{\omega}(z)a(z)=\sum_if_i(z)\Omega(z)A_i\Omega(z)^{-1}.$$ Since $0\in S$, it follows that $\tilde{\omega}(a)$ has no poles outside $D_2\cdot S$ (because conjugating with $\Omega(z)$ only introduces poles in $D_2\cdot \{0\}$). Hence $\tilde{\omega}$ defines an automorphism of $\mf{sl}(2,\CC)\otimes_{\CC}\ot$, and the equivariance property \eqref{PsiEquiv} for the associated map $\omega$ still holds.
\end{proof}

\begin{Remark}\label{rem:intertwiner}
The operator $\tilde{\omega}$ maps the subalgebra $\mf{sl}(2,\CC)\otimes_{\CC}\ot^{\tilde{\sigma}(D_2)}$ onto the automorphic Lie algebra $\mf{A}(\mf{sl}(2,\CC),\tau, S,\rho)$. In particular, it establishes an isomorphism of Lie algebras and $\ot^{\tilde{\sigma}(D_2)}$-modules $$\mf{sl}(2,\CC)\otimes_{\CC}\ot^{\tilde{\sigma}(D_2)} \xrightarrow[]{\,\tilde{\omega}\,} \mf{A}(\mf{sl}(2,\CC),\tau, S,\rho).$$ 

Explicitly, identifying $A\in \mf{sl}(2,\CC)$ with $A\otimes 1$,  and letting $E(z):=\omega(z)e$, $F(z):=\omega(z)f$ and $H(z):=\omega(z)h$, we see that $$E(\sigma(t)z)=\rho(t)E(z),\quad F(\sigma(t)z)=\rho(t)F(z), \quad H(\sigma(t)z)=\rho(t)H(z),\quad t\in D_2,$$
and these satisfy the usual $\mf{sl}(2,\CC)$-relations $$[H(z),E(z)]=2E(z),\quad [H(z),F(z)]=-2F(z),\quad [E(z),F(z)]=H(z),$$
for all $z\in T\setminus  D_2\cdot \{0\}$. 

Thus, conjugation by $\Omega(z)$ embeds $\mf{sl}(2,\CC)$ $D_2$-equivariantly into the current algebra $\mf{sl}(2,\CC)\otimes_{\CC}\ot$.
\end{Remark}

\begin{Remark}\label{SignsOmega}
The matrix $\Omega(z)$ is not unique among matrices whose adjoint action satisfies \eqref{PsiEquiv}. For example, for any choice of $a_i,b_i\in \{0,1\}$, we may replace $\Omega(z)$ by
$$\tilde{\Omega}(z):=T_1^{a_1}T_2^{a_2}\Omega(z) T_1^{b_1}T_2^{b_2},$$ where $T_1,T_2$ are defined in \eqref{def:rho'}. Since $T_1T_2=-T_2T_1$, the equivariance property of $\Ad(\tilde{\Omega})$ \eqref{PsiEquiv} remains unchanged. This fact will be used later in this section.
\end{Remark}

We have seen that $\Ad(\Omega): \CC\setminus \frac{1}{2}\LL\rightarrow \A(\mf{sl}(2,\CC))$ is a holomorphic map, meromorphic at $\frac{1}{2}\LL$, which descends to a holomorphic map on $T\setminus D_2\cdot\{0\}$. This map serves in the construction of an operator that intertwined the actions defined by $\sigma$ and $\rho$ as in \eqref{def:Gamma-action} and \eqref{rhoprime}, respectively. 

 We now describe a procedure, inspired by \cite[Theorem 3.3]{knibbeler2025uniform}, to generalise this construction and obtain a map $T\setminus D_2\cdot\{0\}\rightarrow \A(\g)$, where $\g$ is a complex simple Lie algebra. This procedure establishes an explicit isomorphism between $\mf{A}(\g,\tau, S,\rho)$, for suitably chosen data, and a current algebra, in a manner analogous to that described in Remark \ref{rem:intertwiner}.

Given such a $\g$, one canonically associates to it the adjoint group $G_{\mathrm{ad}}$; see, for example, \cite[Section 1.1]{collingwood1993nilpotent}.
The group $G_{\mathrm{ad}}$ may be defined as the unique connected Lie group with Lie algebra $\g$ such that the adjoint representation $\Ad:G_{\mathrm{ad}}\rightarrow \A(\g)$ is faithful.
The group of inner automorphisms of $\g$, denoted by $\mathrm{Inn}(\g)$, is defined to be $\Ad(G_{\mathrm{ad}})$. 
It is known that $\Ad(G_{\mathrm{ad}})=\A(\g)^0 \cong G_\mathrm{ad}$, where $\A(\g)^0$ denotes the connected component of $\A(\g)$ containing the identity. 

A homomorphism $\overline{\rho}: PGL(2,\CC)\rightarrow \mathrm{Inn}(\g)$ factors uniquely as $\overline{\rho}=\Ad\circ \Psi$, where $\Psi:PGL(2,\CC)\rightarrow G_{\mathrm{ad}}$ is a homomorphism, since $\Ad:G_{\mathrm{ad}}\rightarrow \mathrm{Inn}(\g)$ is an isomorphism. We may summarise the idea of generalising $\Omega(z)$ in the following way: $$\Omega(z) \xmapsto[]{\Pi}[\Omega(z)] \xmapsto[]{\Ad\circ \Psi}\Ad(\Psi([\Omega(z)])),$$
where $\Pi:GL(2,\CC)\rightarrow PGL(2,\CC)$ is the canonical projection. Note that $z\mapsto [\Omega(z)]$ defines a map on $T\setminus D_2\cdot\{0\}$ by \eqref{eq:Trans1} and \eqref{eq:Trans2}.

The first step toward the explicit construction of a map $T\setminus D_2\cdot\{0\}\rightarrow \A(\g)$ is to factorise the matrix $\Omega(z)$ as
$$\begin{pmatrix}
\psi_-(z)\theta_{2}(2z|2\tau)  & \theta_{3}(2z|2\tau) \\
\psi_+(z)\theta_{3}(2z|2\tau) & \theta_{2}(2z|2\tau)  
\end{pmatrix}=\begin{pmatrix}
1 &\frac{\theta_3(2z|2\tau)}{\theta_2(2z|2\tau)}\\
 0& 1
\end{pmatrix}\begin{pmatrix}
1 &0\\
\frac{\psi_+(z)}{2\theta_2(0|\tau)}\frac{\theta_2(2z|\tau)}{\theta_1(2z|\tau)}& 1
\end{pmatrix}\begin{pmatrix}
 \frac{\det(\Omega(z))}{\theta_2(2z|2\tau)} &0\\
0&\theta_2(2z|2\tau)
\end{pmatrix}
$$
This can be written as 
\begin{equation}\label{exp:factorisation}
\Omega(z)=\Delta(z)\exp\left(\frac{\theta_3(2z|2\tau)}{\theta_2(2z|2\tau)}e\right)\exp\left(\frac{\psi_+(z)}{2\theta_2(0|\tau)}\frac{\theta_2(2z|\tau)}{\theta_1(2z|\tau)}f\right)\exp\left(\log\left(\frac{\Delta(z)}{\theta_2(2z|2\tau)}\right)h\right),
\end{equation}
where $\Delta(z)=\sqrt{\det(\Omega(z))}$, $\psi_+$ is a meromorphic function on $T$ defined in \eqref{psi}, and $h,e,f$ are the standard basis elements of $\mf{sl}(2,\CC)$.

Suppose that $\Psi$ is a homomorphism $PGL(2,\CC)\rightarrow G_{\mathrm{ad}}$ and let $\mathrm{d}\Psi:\mf{pgl}(2,\CC)\rightarrow \g$ be the derivative at the identity of $PGL(2,\CC)$. The exponential intertwines $\Psi$ and $\mathrm{d}\Psi$ so that $\Psi(\exp(A))=\exp(\mathrm{d}\Psi(A))$ for all $A\in \mf{pgl}(2,\CC)\cong \mf{sl}(2,\CC)$. We let $$H=\mathrm{d}\Psi\left(\left[\begin{pmatrix}
1 &0\\
0& -1
\end{pmatrix}\right]\right), \quad E=\mathrm{d}\Psi\left(\left[\begin{pmatrix}
0 &1\\
0& 0
\end{pmatrix}\right]\right),\quad F=\mathrm{d}\Psi\left(\left[\begin{pmatrix}
0 &0\\
1& 0
\end{pmatrix}\right]\right),$$
where $[A']$ stands for the equivalence class of $A'\in \mf{gl}(2,\CC)$ in $\mf{pgl}(2,\CC)$. 
We use the factorisation of $\Omega(z)$ \eqref{exp:factorisation} to obtain
\begin{equation}\label{def:GeneralOmega}
\Psi([\Omega(z)])=\exp\left(\frac{\theta_3(2z|2\tau)}{\theta_2(2z|2\tau)}E\right)\exp\left(\frac{\psi_+(z)}{2\theta_2(0|\tau)}\frac{\theta_2(2z|\tau)}{\theta_1(2z|\tau)}F\right)\exp\left(\log\left(\frac{\Delta(z)}{\theta_2(2z|2\tau)}\right)H\right),
\end{equation}
and applying $\Ad$ yields a map $T\setminus D_2\cdot\{0\}\rightarrow \A(\g)$.

The next result generalises Lemma \ref{Lemma3} to complex reductive Lie algebras $\g$. Recall that reductiveness means  $\g=[\g,\g]\oplus Z(\g)$, where $[\g,\g]$ is the semisimple derived subalgebra and $Z(\g)$ is the centre of $\g$. For reductive $\g$ it need not hold that $\A(\g)^0=\mathrm{Inn}(\g)$. For example, if $\mf{a}$ is abelian then $\A(\mf{a})^0=GL(\mf{a})$ and $G_{\mathrm{ad}}=\{1\}$, and thus $\mathrm{Inn}(\mf{a})=\{\mathrm{Id}\}$, see  \cite[Section 1.2]{collingwood1993nilpotent}. In our context of representations $D_2\rightarrow \mathrm{Inn}(\g)$, this implies that $D_2$ acts trivially on $Z(\g)$ (since $Z(\g)$ is abelian), and we may identify $\mathrm{Inn}(\g)=\mathrm{Inn}([\g,\g])\times \{\mathrm{Id}_{Z(\g)}\}$. 

\begin{Proposition}\label{Prop4}
Let $\g$ be a finite-dimensional complex reductive Lie algebra and suppose that $\rho:D_2\rightarrow \mathrm{Inn}(\g)$ is a  representation that factors through a representation $\overline{\rho}:PGL(2,\CC)\rightarrow \mathrm{Inn}(\g)$. Let $\sigma:D_2\rightarrow \A(T)$ be defined by $\sigma(t_1)z=z+\tfrac{1}{2}$ and $\sigma(t_2)z=z+\tfrac{\tau}{2}$.
Then there exists an $\Omega_{\overline{\rho}}\in \A(\g\otimes_{\CC}\ot)$ such that $$\Omega_{\overline{\rho}}(\sigma(t) z)=\rho(t)\Omega_{\overline{\rho}}(z)$$
for all $t\in D_2$ and $z\in \TT=T\setminus D_2\cdot S$, where $S$ is a nonempty, finite set.
\end{Proposition}

\begin{proof}

\textbf{Step 1: $\g$ simple.}\\

Assume first that $\g$ is a finite-dimensional complex simple Lie algebra.  
By assumption,  $\rho$ factors as $\rho=\overline{\rho}\circ \delta$, where $\overline{\rho}: PGL(2,\CC)\rightarrow \mathrm{Inn}(\g)$ and $\delta: D_2\rightarrow PGL(2,\CC)$ are homomorphisms. We may assume without loss of generality that $\delta$ is injective. 

Let
$\tilde{\delta}:D_2\rightarrow PGL(2,\CC)$ be defined by  
$$\tilde{\delta}(t_1)=\left[\begin{pmatrix}
1&0\\
0&-1
\end{pmatrix}\right],\quad \tilde{\delta}(t_2)=\left[\begin{pmatrix}
0&1\\
1&0
\end{pmatrix}\right].$$ 
Recall that $\tilde{\delta}$ is the unique injective homomorphism up to conjugation in $PGL(2,\CC)$. Let $M\in PGL(2,\CC)$ be such that $\delta(t)=M\tilde{\delta}(t)M^{-1}$ for all $t\in D_2$. Then $\rho$ is conjugate to $\overline{\rho}\circ \tilde{\delta}$ in $\mathrm{Inn}(\g)$, i.e.,  $$\overline{\rho}(\tilde{\delta}(t))=\overline{\rho}(M)^{-1}\rho(t)\overline{\rho}(M).$$ 
Moreover, $\overline{\rho}=\Ad\circ \Psi$, for a unique homomorphism $\Psi: PGL(2,\CC)\rightarrow G_{\mathrm{ad}}$,  where $G_{\mathrm{ad}}$ denotes the associated adjoint group associated with $\g$. 

The holomorphic map $\Omega':\CC\setminus \frac{1}{2}\LL \rightarrow PGL(2,\CC)$ given by $\Omega'(z)= [\Omega(z)]$ descends to a holomorphic map on $T\setminus D_2\cdot\{0\}$. We introduce $\Omega_{\overline{\rho}}$, viewed as $\mathrm{Inn}(\g)$-valued map on $\CC$, by $$\Omega_{\overline{\rho}}(z):=\overline{\rho}(M)\overline{\rho}(\Omega'(z)).$$
By Lemma \ref{Lemma3},
\begin{align*}
\Omega_{\overline{\rho}}(\sigma(t) z)&=\overline{\rho}(M)\overline{\rho}(\tilde{\delta}(t))\overline{\rho}(\Omega'(z))\\
&=\overline{\rho}(M)\overline{\rho}(M)^{-1}\rho(t)\overline{\rho}(M)\overline{\rho}(\Omega'(z))\\
&=\rho(t)\Omega_{\overline{\rho}}(z)
\end{align*}
for all $t\in D_2$ and $z\in T\setminus D_2\cdot\{0\}$. The explicit form of $\overline{\rho}(\Omega'(z))$ is given by the adjoint map $\Ad$ applied to \eqref{def:GeneralOmega} for the relevant $\Psi$. 
By construction, $\Omega_{\overline{\rho}}(z)$ is an inner automorphism of $\g$ for every $z$ in its domain of definition. 

Note that $\Omega_{\overline{\rho}}$ plays an analogous role to $\omega=\Ad(\Omega)$ in Lemma \ref{Lemma3}. To avoid overloading the notation, we do not introduce the separate symbol $\tilde{\Omega}_{\overline{\rho}}$ for the induced map on $\g\otimes_{\CC}\ot$, but the construction of Lemma \ref{Lemma3} should be kept in mind when interpreting  $\Omega_{\overline{\rho}}$.

We will show that $\Omega_{\overline{\rho}}$ preserves the location of the poles, i.e., that $\Omega_{\overline{\rho}}$ induces an automorphism of $\g\otimes_{\CC}\mc{O}_{T\setminus D_2\cdot\{0\}}$. From this, it will follow that it is an automorphism of $\g\otimes_{\CC}\ot$. 

For $n\in \ZZ_{\geq 0}$, let $\mathrm{Sym}^{n}(\CC^2):=(\CC^2)^{\otimes n}/S_{n}$ where $S_{n}$ is the symmetric group on $n$ symbols which acts on  $(\CC^2)^{\otimes n}$ by permutations. Let $\mathrm{Det}: GL(2,\CC)\rightarrow \CC^*$ be the determinant representation, which is given by $\mathrm{Det}(g)=\det(g)$.
 The finite-dimensional irreducible representations of $GL(2,\CC)$ are given by $$V_{\lambda}=\mathrm{Sym}^{\lambda_1-\lambda_2}(\CC^2)\otimes \mathrm{Det}^{\lambda_2}, \quad \text{where} \quad \lambda=(\lambda_1,\lambda_2)\in \ZZ^2, \,\, \lambda_1\geq \lambda_2,$$ cf. \cite[Proposition 15.47]{fulton2013representation}. Among these, those that factor through $PGL(2,\CC)$ correspond to $\lambda_1=-\lambda_2=n$, i.e., $V_{(n,-n)}$.
Concretely, $V_{\lambda}$ may be identified with the space of homogeneous polynomials in $x,y$ of degree $\lambda_1-\lambda_2$. Here,  $g\in GL(2,\CC)$ acts on a polynomial $P(x,y)$ as $g\cdot P(x,y)=P(g^{-1}x,g^{-1}y)$, where we identify $x$ with $(1,0)^T\in \CC^2$ and $y$ with $(0,1)^T\in\CC^2$. Thus the representations that factor through $PGL(2,\CC)$ correspond to spaces of even degree homogenous polynomials. 

To show that, for fixed $x\in \g$, the map $z\mapsto \Omega_{\overline{\rho}}(z)x$ defines a $\g$-valued meromorphic map on $\CC/\LL$, we use two facts. First, by Lemma \ref{det}, $\det\Omega$ vanishes exactly on $\frac{1}{2}\LL$. Second, the matrix entries of $\Omega$ are meromorphic functions on $\CC$ whose possible poles lie in $\frac{1}{2}\LL$. Since $V_{(n,-n)}=\mathrm{Sym}^{2n}(V)\otimes \mathrm{Det}^{-n}$ and every finite-dimensional representation of $PGL(2,\CC)$ is completely reducible (hence in particular $\overline{\rho}$), it follows that $\overline{\rho}(\Omega'(z))x=\Omega_{\overline{\rho}}(z)x$ is meromorphic on $\CC$ with at most poles only in the same set, namely those in $\frac{1}{2}\LL$. By the equivariance properties we derived earlier in the proof, we have $$\Omega_{\overline{\rho}}(z+1)=\Omega_{\overline{\rho}}(z),\quad \Omega_{\overline{\rho}}(z+\tau)=\Omega_{\overline{\rho}}(z),$$ so that we in fact have meromorphic functions on $\CC/\LL$, holomorphic outside $D_2\cdot \{0\}$. Indeed, if $x\in \g$ and $X(z)=\Omega_{\overline{\rho}}(z)x$, then $X(z+1)=X(z)$ and $X(z+\tau)=X(z),$  for all $z\in T\setminus D_2\cdot \{0\}$. Thus,  $\Omega_{\overline{\rho}}$ preserves $\g\otimes_{\CC}\mathcal{O}_{T\setminus D_2\cdot \{0\}}$, and it is an automorphism of $\g\otimes_{\CC}\ot$ by the same argument as in the proof of Lemma \ref{Lemma3}. \\

\textbf{Step 2: $\g$ semisimple.}\\

 Suppose that $\g=\g_1\oplus \dots \oplus \g_m$, with each $\g_k$ simple. Since $\rho(D_2)\subset \mathrm{Inn}(\g)$, the action preserves the simple summands.  Thus, $\rho$ decomposes correspondingly as $\rho=\rho_1\oplus \dots \oplus\rho_m$, where $\rho_k:D_2\rightarrow \mathrm{Inn}(\g_k)$ is a representation which factors through $\overline{\rho}_k:PGL(2,\CC)\rightarrow \mathrm{Inn}(\g_k)$. Define $$\Omega_{\overline{\rho}}=\Omega_{\overline{\rho}_1}\oplus \dots \oplus \Omega_{\overline{\rho}_m}.$$ It is clear that we still have $\Omega_{\overline{\rho}}\in \A(\g\otimes_{\CC}\ot)$, as well as the stated equivariance properties.\\

\textbf{Step 3: $\g$ reductive.}\\

Write $\g=[\g,\g]\oplus Z(\g)$, where $Z(\g)$ is the centre of $\g$. The semisimple part decomposes as $[\g,\g]=\g_1\oplus \dots \oplus \g_m$ where all $\g_k$ are simple. Since $\rho$ preserves this decomposition of $\g$, it splits as $\rho=\rho_1\oplus \dots \oplus\rho_m\oplus \rho_{Z(\g)}$, where $\rho_{Z(\g)}:D_2\rightarrow \mathrm{Inn}(Z(\g))$ is the trivial representation since  $\mathrm{Inn}(Z(\g))=\{\mathrm{Id}_{Z(\g)}\}$, where $\mathrm{Id}_{Z(\g)}$ is the identity map on $Z(\g)$.
Define $$\Omega_{\overline{\rho}}=\Omega_{\overline{\rho}_1}\oplus \dots \oplus \Omega_{\overline{\rho}_m}\oplus \mathrm{Id}_{Z(\g)},$$
where $\mathrm{Id}_{Z(\g)}:\TT\rightarrow  \mathrm{Inn}(Z(\g))$ is the identity map on $\TT$. Then  $\Omega_{\overline{\rho}}(\sigma(t)z)=\rho(t)\Omega_{\overline{\rho}}(z)$.
Finally, it is clear that $\Omega_{\overline{\rho}}$ is an automorphism of $\g\otimes_{\CC}\ot$. This completes the proof.
\end{proof}

The ring of invariants of $\mathcal{O}_{T\setminus D_2\cdot\{0\}}$ with respect to the action of $D_2$ is given by  $\mathcal{O}_{T\setminus  D_2\cdot\{0\}}^{D_2}=\CC[\wp_{\frac{1}{2}\LL},\wp_{\frac{1}{2}\LL}'],$
cf. Lemma \ref{IsotypicalComponents}. Note that $$\CC[\wp_{\frac{1}{2}\LL},\wp_{\frac{1}{2}\LL}']\cong \CC[\wp_{\LL},\wp_{\LL}']\cong \CC[x,y]/(y^2-4x^3+g_2(\LL)x+g_3(\LL))=:R_{\LL}$$ as $\CC$-algebras (recall the differential equation satisfied by $\wp_{\LL}$ \eqref{eq:wp_prime}). A direct way of seeing this, is by noting that $p(x,y)\mapsto p(\alpha^2x,\alpha^3y)$ provides an isomorphism $R_{\LL}\rightarrow  R_{\alpha\LL}$, making use of the modularity properties of $g_2$ and $g_3$ \eqref{eq:modularity}. 

The next proposition describes the ring of invariants when we puncture $T$ at the set $D_2\cdot S$, where $S\subset T$ is a nonempty finite set. As before, we may assume without loss of generality that $p_0=0\in S$. The result states that $\ot^{D_2}$ is generated over $\CC$ by $\wp_{\frac{1}{2}\LL}$, $\wp_{\frac{1}{2}\LL}'$, and certain functions $\xi_p$ on $T$, which account for the punctures other than those in $D_2\cdot\{0\}=\{0,\tfrac{1}{2},\tfrac{\tau}{2}, \tfrac{1+\tau}{2}\}$. 
\begin{Proposition}\label{ot2} 
Let $\TT=T\setminus D_2\cdot \{p_0=0,p_1,\ldots, p_{n-1}\}$. The ring of invariants is given by 
$$\ot^{D_2}=\CC[\wp_{\frac{1}{2}\LL},\wp_{\frac{1}{2}\LL}',\xi_{p_1},\dots, \xi_{p_{n-1}}],$$
where $\xi_{p}(z)=\mu_1(z)\mu_1(z-p)$ and $\mu_1(z)=\frac{\theta_1'(0|\tau)}{\theta_{2}(0|\tau)}\frac{\theta_{2}(2z|\tau)}{\theta_1(2z|\tau)}.$ In other words, $\ot^{D_2}$ is obtained from $\mathcal{O}_{T\setminus D_2\cdot\{0\}}^{D_2}=\CC[\wp_{\frac{1}{2}\LL},\wp_{\frac{1}{2}\LL}']$ by adjoining $\xi_p$ for every $p\in S\setminus\{0\}$.
\end{Proposition}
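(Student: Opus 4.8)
\emph{Proof idea.} The plan is to descend to the quotient torus and then run a short Riemann--Roch pole-peeling argument. Since $D_2=\sigma(D_2)$ acts on $T=\CC/\LL$ by the translations generating $\tfrac12\LL/\LL$, the quotient $T/D_2$ is the torus $E':=\CC/\tfrac12\LL$, and a meromorphic function on $T$ is $D_2$-invariant precisely when it is the pull-back of a meromorphic function on $E'$. Under this identification $\ot^{D_2}$ becomes the ring of meromorphic functions on $E'$ that are holomorphic away from the images $Q_0=0,\,Q_1=\bar p_1,\dots,Q_{n-1}=\bar p_{n-1}$, and these images are pairwise distinct exactly because $p_i-p_j\notin\tfrac12\LL$. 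In particular the case $n=1$ of the Proposition becomes the classical statement $\mathcal O_{E'\setminus\{0\}}=\CC[\wp_{\frac{1}{2}\LL},\wp_{\frac{1}{2}\LL}']$, which is literally the $\alpha_{00}$-component in Lemma~\ref{IsotypicalComponents}; this will serve as the base of an induction on $n$.

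Next I would record the needed facts about $\xi_p$. From the transformation rules $\mu_1(z+\tfrac12)=\mu_1(z)$ and $\mu_1(z+\tfrac\tau2)=-\mu_1(z)$ established in the proof of Lemma~\ref{IsotypicalComponents}, the product $\xi_p(z)=\mu_1(z)\mu_1(z-p)$ is invariant under $z\mapsto z+\tfrac12$ and $z\mapsto z+\tfrac\tau2$, hence descends to $E'$; and since $\mu_1$ has simple poles exactly on $\tfrac12\LL$, the descended function has at most simple poles, located at $Q_0$ (from the factor $\mu_1(z)$) and at $\bar p$ (from $\mu_1(z-p)$). The pole at $\bar p$ is genuine iff $\mu_1(p)\neq0$, i.e.\ iff $p\notin\tfrac14+\tfrac12\LL$; in the remaining (exceptional) case one replaces $\mu_1$ by $\mu_j$ for an index $j$ with $\mu_j(p)\neq0$, which exists because $\theta_2,\theta_3,\theta_4$ have disjoint zero sets, and because the space of functions on $E'$ with poles bounded by $Q_0+\bar p$ has dimension $2$ by Riemann--Roch, any two such choices of generator differ only by an element of $\CC[\wp_{\frac{1}{2}\LL},\wp_{\frac{1}{2}\LL}']$, so the right-hand ring is unaffected. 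I will therefore assume each $\xi_{p_i}$ has a genuine simple pole at $Q_i$. This already yields the inclusion $\CC[\wp_{\frac{1}{2}\LL},\wp_{\frac{1}{2}\LL}',\xi_{p_1},\dots,\xi_{p_{n-1}}]\subseteq\ot^{D_2}$, since $\wp_{\frac{1}{2}\LL},\wp_{\frac{1}{2}\LL}'$ are holomorphic off $Q_0$ and each $\xi_{p_i}$ is holomorphic off $\{Q_0,Q_i\}$.

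For the reverse inclusion I would induct on $n$, the case $n=1$ being the base above. Given $f\in\ot^{D_2}$, viewed on $E'$, let $m$ be the order of its pole at $Q_{n-1}$. Since $\xi_{p_{n-1}}^{\,k}$ has a pole of order exactly $k$ at $Q_{n-1}$, one can successively subtract constant multiples of $\xi_{p_{n-1}}^{\,m},\xi_{p_{n-1}}^{\,m-1},\dots$ to lower this order, obtaining after finitely many steps a polynomial $q$ with $f-q(\xi_{p_{n-1}})$ holomorphic at $Q_{n-1}$; as $q(\xi_{p_{n-1}})$ has poles only at $Q_0$ and $Q_{n-1}$, the function $g:=f-q(\xi_{p_{n-1}})$ is then holomorphic off $\{Q_0,\dots,Q_{n-2}\}$ and still $D_2$-invariant, so by the inductive hypothesis $g\in\CC[\wp_{\frac{1}{2}\LL},\wp_{\frac{1}{2}\LL}',\xi_{p_1},\dots,\xi_{p_{n-2}}]$ and hence $f$ lies in the asserted ring. (Equivalently, a one-shot Riemann--Roch count works: for $k\ge1$ the space of functions on $E'$ with poles bounded by $k(Q_0+\cdots+Q_{n-1})$ has dimension $kn$, and the $k$ monomials in $\wp_{\frac{1}{2}\LL},\wp_{\frac{1}{2}\LL}'$ of pole order $\le k$ at $Q_0$ together with the powers $\xi_{p_i},\dots,\xi_{p_i}^{\,k}$, $i=1,\dots,n-1$, give $kn$ elements of the right-hand ring in that space which are linearly independent by inspecting pole orders at each $Q_i$, hence a basis.) The only genuinely delicate point — the place where care is really needed — is the non-degeneracy of $\xi_p$ at $\bar p$: the literal formula $\xi_p=\mu_1(z)\mu_1(z-p)$ produces a constant, and so contributes nothing, precisely when $p\in\tfrac14+\tfrac12\LL$, and there one must pass to a different uniformising function $\mu_j$ as above. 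Everything else — the descent to $E'=\CC/\tfrac12\LL$, the elementary pole bookkeeping, and the base case supplied by Lemma~\ref{IsotypicalComponents} — is routine.
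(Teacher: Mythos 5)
Your argument is correct and reaches the same conclusion, but it is organised differently from the paper's proof. The paper works upstairs on $T$: it takes the divisor $D=\sum_{p\in D_2\cdot S}(p)$ of degree $4n$, uses Riemann--Roch together with the isotypical decomposition of Lemma \ref{IsotypicalComponents} to exhibit explicit spanning sets of the spaces $L(kD)$ and their invariant parts built from the functions $\mu_j\circ t_p$ and $\xi_p$, and then takes the union over $k$. You instead quotient first: since $D_2$ acts by the translations generating $\tfrac12\LL/\LL$, invariance is equivalent to being a pullback from $E'=\CC/\tfrac12\LL$, so the statement reduces to describing the ring of functions on an elliptic curve holomorphic off $n$ prescribed points; the case $n=1$ is the classical fact recorded in the $\alpha_{00}$ line of Lemma \ref{IsotypicalComponents}, and the general case follows by your pole-peeling induction (or by your one-shot count, which is essentially the paper's dimension count after descent, the factor $4$ having been absorbed by the quotient). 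Your route trivialises the invariance bookkeeping and avoids the other three characters; the paper's route stays on $T$ and produces along the way the auxiliary relations (such as $\xi_p^2\in\CC\langle 1,\xi_p,\wp_{\frac{1}{2}\LL}\circ t_p\rangle$) that it uses to reduce to polynomial generators.

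One substantive remark: the degenerate case you isolate is real, and the paper's proof glosses over it. If $p\in\tfrac14+\tfrac12\LL$ (which is allowed by the hypothesis $p\notin\tfrac12\LL$), the zeros of $\mu_1(z)$, which lie exactly on $\tfrac14+\tfrac12\LL$, cancel the poles of $\mu_1(z-p)$ and vice versa, so $\xi_p(z)=\mu_1(z)\mu_1(z-p)$ is a pole-free elliptic function, hence a constant; the assertion in the paper's proof that $\xi_p$ has order-one poles precisely in $\{0,p\}+\tfrac12\LL$ then fails, and with the literal definition of $\xi_p$ the displayed ring contains no function with poles on $D_2\cdot\{p\}$ (for instance it misses $\wp_{\frac{1}{2}\LL}\circ t_p$). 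Your repair --- replace $\mu_1$ by a $\mu_j$ with $\mu_j(p)\neq 0$, which exists because the zero sets of $\mu_1,\mu_2,\mu_3$ are disjoint cosets of $\tfrac12\LL$, and observe that any two non-degenerate choices differ by an affine change inside the two-dimensional space $L((0)+(\bar p))$ --- is the right one, and the invariance of $\mu_j(z)\mu_j(z-p)$ holds for each $j$ since each $\mu_j$ spans a character line. Just be explicit that in this exceptional case you are proving a corrected version of the Proposition (with a modified $\xi_p$), not the statement verbatim; outside that case your proof and the paper's agree on the result.
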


\begin{proof}
Consider the divisor  $D=\sum_{p\in  D_2\cdot S}(p)$ on $T$, where we may assume without loss of generality that $S=\{p_0=0,p_1,\ldots, p_{n-1}\}\subset T$ is chosen so that $p_i\not\in D_2\cdot\{p_j\}$, for all $i\neq j$. Recall that by the Riemann-Roch theorem, $\dim L(D)=\deg(D)$. Here, the degree of $D$ is $\deg(D)=\#(D_2\cdot S)=4n$. 
Introduce for $p\in S\setminus\{0\} $ the function $\xi_p$ on $T$ defined by
\begin{equation}\label{xip}
\xi_p(z)=\mu_1(z)\mu_1(z-p).
\end{equation}
Note that $\xi_p$ is $ D_2$-invariant by Lemma \ref{IsotypicalComponents} and hence defines a meromorphic function on $\CC/\frac{1}{2}\LL$ with order 1 poles precisely in $\{0,p\}$.
Define $\xi_0:=1$. 
Let $t_q:T\rightarrow T$ be the translation $t_q(z)=z-q$,  where $q\in T$.
Consider the set $$J:=\bigcup_{p\in S}\left\{\xi_p,\mu_1\circ t_p, \mu_2\circ t_p, \mu_3\circ t_p\right\}\subset L(D).$$ The functions in $J$ are linearly independent over $\CC$ because the $\mu_i\circ t_p$ lie in distinct isotypical components, and the pole sets $D_2\cdot \{p\}$ are disjoint for $p\neq q$. Moreover, the functions $\xi_p$ have poles only at $\{0,p\}$, ensuring linear independence. Therefore, $\dim \CC\langle J\rangle \geq 4n$, where $\CC\langle J\rangle$ is the linear space spanned over $\CC$ by the set $J$. 
Since $\CC\langle J\rangle \subset L(D)$, we have $\CC\langle J\rangle =L(D)$.

For $k\geq 1$, we have $\dim L((k+1)D)/L(kD)=4n(k+1)-4nk=4n$.  For $k=1$, a basis of the quotient vector space is $$\bigcup_{p\in S}\left\{\mu_1^2\circ t_p,(\mu_1\mu_2)\circ t_p, (\mu_1\mu_3)\circ t_p, (\mu_2\mu_3)\circ t_p\right\}+L(D).$$
It follows that
\begin{align*}
L(2D)&=L(D)\oplus \CC\big\langle \bigcup_{p\in S}\left\{\mu_1^2\circ t_p,(\mu_1\mu_2)\circ t_p, (\mu_1\mu_3)\circ t_p, (\mu_2\mu_3)\circ t_p\right\}\big\rangle,
\end{align*}
and inductively, 
\begin{equation*}
L(kD)=L((k-1)D)\oplus  \CC\big\langle \bigcup_{p\in S}\left\{\prod(\mu_{i_1}\mu_{i_2}\cdots \mu_{i_k})\circ t_p\right\}\big\rangle,
\end{equation*} 
where the product is over $(i_1,\ldots,i_k)\in \{0,1,2,3\}^k$ (this can be made more explicit using the same idea as in the proof of Lemma \ref{IsotypicalComponents}). 

Next, we compute the invariant subspaces of $L(kD)$ for all $k$.
Clearly, $L(0)^{D_2}=L(0)=\CC$, and it follows that $L(D)^{D_2}=\CC\langle \bigcup_{p\in S}\{\xi_p\} \rangle$ since $S$ has size $n$ and the $\mu_i\circ t_p$ account for $3n$ $\CC$-linearly independent functions which are not invariant.  
To obtain $L(2D)^{D_2}$ and $L(3D)^{D_2}$, we use that $\mu_i^2=\frac{1}{4}\wp_{\frac{1}{2}\LL}+c_i$ for some $c_i\in \CC$ and $\mu_1\mu_2\mu_3=-\frac{1}{16}\wp_{\frac{1}{2}\LL}'$ (see the proof of Lemma \ref{IsotypicalComponents}). We deduce 
\begin{equation}\label{LD-2} 
L(2D)^{D_2}=L(D)^{D_2}\oplus \CC\big\langle  \bigcup_{p\in S}\{\wp_{\frac{1}{2}\LL}\circ t_p\}\big\rangle, \quad L(3D)^{D_2}=L(2D)^{D_2}\oplus \CC\big\langle \bigcup_{p\in S}\{\wp_{\frac{1}{2}\LL}'\circ t_p\}\big\rangle.
\end{equation}
 Inductively, $$\bigcup_{k\in \ZZ_{\geq 0}}L(kD)^{D_2}=\bigoplus_{p\in S\setminus \{0\}}\CC\xi_{p}\oplus \CC\big[ \bigcup_{p'\in S}\{\wp_{\frac{1}{2}\LL}\circ t_{p'},\wp_{\frac{1}{2}\LL}'\circ t_{p'}\}\big].$$ 
Making use of \eqref{LD-2}, we infer $$\xi_p^2\in \CC\langle 1,\xi_p,\wp_{\frac{1}{2}\LL}\circ t_p, \wp_{\frac{1}{2}\LL}\rangle, \quad \xi_p^3\in \CC\langle 1,\xi_p,\wp_{\frac{1}{2}\LL}\circ t_p, \wp_{\frac{1}{2}\LL}'\circ t_p, \wp_{\frac{1}{2}\LL}, \wp_{\frac{1}{2}\LL}'\rangle.$$ This implies that we can write $\wp_{\frac{1}{2}\LL}\circ t_p$ and $\wp_{\frac{1}{2}\LL}'\circ t_p$ in terms of algebraic expressions of $\xi_p$, $\wp_{\frac{1}{2}\LL}$ and $\wp_{\frac{1}{2}\LL}'$, and thus $\CC[\wp_{\frac{1}{2}\LL}\circ t_p,\wp_{\frac{1}{2}\LL}'\circ t_p]\subset \CC[\wp_{\frac{1}{2}\LL}, \wp_{\frac{1}{2}\LL}', \xi_p]$, for all $p\in S$.
We may conclude that $$\ot^{D_2}=\bigcup_{k\in \ZZ_{\geq 0}}L(kD)^{D_2}=\CC[\wp_{\frac{1}{2}\LL},\wp_{\frac{1}{2}\LL}'][\xi_{p_1}]\ldots [\xi_{p_{n-1}}]=\CC[\wp_{\frac{1}{2}\LL},\wp_{\frac{1}{2}\LL}',\xi_{p_1},\dots, \xi_{p_{n-1}}].$$
Finally, we observe that if $S=\{0\}$, we recover the case of one orbit of punctures: $\mathcal{O}_{T\setminus  D_2\cdot\{0\}}^{D_2}=\CC[\wp_{\frac{1}{2}\LL},\wp_{\frac{1}{2}\LL}']$. 
\end{proof}

 The next result appears in Theorem 6 \cite{knibbeler2024classification}  for $\g=\mf{sl}(2,\CC)$, but here it is extended to arbitrary finite unions of orbits of $D_2$ and to representations $ D_2\rightarrow \mathrm{Inn}(\g)$ that factor through $PGL(2,\CC)$ for more general $\g$. We remind the reader that there is precisely one faithful homomorphism $D_2\rightarrow PGL(2,\CC)$, up to conjugation. For clarity, we use the full notation instead of the shorthand $\mf{A}(\g, \tau, S, \rho)$ in the next two results.
  
\begin{Theorem}\label{Thm2}
Let $\g$ be a finite-dimensional complex reductive Lie algebra. Let $\rho:D_2\rightarrow \mathrm{Inn}(\g)$ be a representation that factors through a representation $\overline{\rho}:PGL(2,\CC)\rightarrow \mathrm{Inn}(\g)$ and let $\sigma:D_2\rightarrow \A(T)$ be a homomorphism that embeds $D_2$ as translations of $T$. Let $S= \{p_0=0,p_1,\ldots, p_{n-1}\}$ and $\TT=T\setminus D_2\cdot S$.  Then there is an isomorphism of Lie algebras and $\ot^{\tilde{\sigma}(D_2)}$-modules: $$(\g\otimes_{\CC}\ot)^{\rho\otimes \tilde{\sigma}(D_2)}\cong \g\otimes_{\CC} \CC[\wp_{\frac{1}{2}\LL},\wp_{\frac{1}{2}\LL}',\xi_{p_1},\ldots, \xi_{p_{n-1}}].$$
When $\g$ is simple, the automorphic Lie algebra $(\g\otimes_{\CC}\ot)^{\rho\otimes \tilde{\sigma}(D_2)}$ has a normal form $$(\g\otimes_{\CC}\ot)^{\rho\otimes \tilde{\sigma}(D_2)}=\ot^{\tilde{\sigma}(D_2)}\big\langle \{\Omega_{\overline{\rho}}(h_i\otimes 1), \Omega_{\overline{\rho}}(a_{\alpha}\otimes 1)\}\big\rangle,$$
where  $\{h_i,a_{\alpha}: i=1,\ldots, \ell, \text{ and } \alpha\in \Phi\}$ is a Chevalley basis for $\g$, and $\Omega_{\overline{\rho}}$ is the intertwiner constructed in Proposition \ref{Prop4}.
\end{Theorem}

\begin{proof}
Without loss of generality, we may assume that $\sigma$ is given by \eqref{def:Gamma-action}. By Proposition \ref{Prop4}, the intertwiner $\Omega_{\overline{\rho}} \in \A(\g\otimes_{\CC}\ot)$ realises an isomorphism $\g\otimes_{\CC}\ot^{\tilde{\sigma}(D_2)}\rightarrow (\g\otimes_{\CC}\ot)^{\rho\otimes \tilde{\sigma}(D_2)}$, in a manner similar to Remark \ref{rem:intertwiner}.

Using Proposition \ref{ot2}, we therefore obtain $$(\g\otimes_{\CC}\ot)^{\rho\otimes\tilde{\sigma}(D_2)}\cong \g\otimes_{\CC}\CC[\wp_{\frac{1}{2}\LL},\wp_{\frac{1}{2}\LL}',\xi_{p_1},\ldots, \xi_{p_{n-1}}].$$
The claim about the normal form follows immediately. Via $\Omega_{\overline{\rho}}$, the Chevalley basis elements $h_i\otimes 1$ and $a_{\alpha}\otimes 1$ are mapped into $(\g\otimes_{\CC}\ot)^{\rho\otimes\tilde{\sigma}(D_2)}$, providing a free $\ot^{\tilde{\sigma}(D_2)}$-module basis and realising the desired Lie algebra isomorphism.
\end{proof}

\begin{Remark}
For $\g$ abelian, we immediately have that $(\g\otimes_{\CC}\ot)^{D_2}=\g\otimes_{\CC}\ot^{D_2}$, since the action of $D_2$ on $\g$ is trivial. 
\end{Remark}

Theorem \ref{Thm2} implies that the $\CC$-Lie algebra isomorphism classes of $(\g\otimes_{\CC}\ot)^{\rho\otimes \tilde{\sigma}(D_2)}$ are independent of the choice of a homomorphism $\rho$. The next corollary describes how the isomorphism classes depend on the complex structure of $T$ and the set of punctures.

\begin{Corollary}\label{cor:isomorphism_classes}
Suppose $\g$ is a complex nonabelian reductive Lie algebra.  For $i=1,2$, let $\rho_i:D_2\rightarrow \mathrm{Inn}(\g)$ and $\sigma_i: D_2\rightarrow \A(T_i)$ be as in Theorem \ref{Thm2}, where $T_i$ is a complex torus, and let $S_i\subset T_i$ be nonempty finite subsets. Set $\TT_i=T_i\setminus \sigma_i(D_2)S_i$.  Then $$(\g\otimes_{\CC}\mathcal{O}_{\mathbb{T}_1})^{\rho_1\otimes \tilde{\sigma}_1(D_2)}\cong (\g\otimes_{\CC}\mathcal{O}_{\mathbb{T}_2})^{\rho_2\otimes \tilde{\sigma}_2(D_2)}$$
as Lie algebras if and only if there exists an isomorphism $\phi: T_1\rightarrow T_2$ such that $\phi(\sigma_1(D_2)S_1)=\sigma_2(D_2)S_2$. 
\end{Corollary}

\begin{proof}
We suppress the notation of $\rho_j\otimes \tilde{\sigma}_j$ and write $D_2\cdot S_j$  for the union of orbits without reference to $\sigma_j$. The proof relies on the equivalence $$\g\otimes_{\CC}\mc{A}\cong \g\otimes_{\CC}\mc{B}\iff \mc{A}\cong \mc{B},$$ where $\mc{A}$ and $\mc{B}$ are associative, commutative algebras with unit and $\g$ is a complex nonabelian reductive Lie algebra. This result follows from the fact that $\mc{A}\cong \mc{B}$ as associative algebras implies that $\g\otimes_{\CC}\mc{A}\cong \g\otimes_{\CC}\mc{B}$ as Lie algebras, together with \cite[Proposition 4.7]{fialowski2005global} which establishes the other direction of the equivalence.  

We set $\mc{A}=\mathcal{O}_{\mathbb{T}_1}^{D_2}\cong \mathcal{O}_{\mathbb{T}_1/D_2}$ and $\mc{B}=\mathcal{O}_{\mathbb{T}_2}^{D_2}\cong \mathcal{O}_{\mathbb{T}_2/D_2}$. It is well known that compact Riemann surfaces correspond to smooth complex projective curves, and removing finitely many points gives affine curves. In particular, punctured compact Riemann surfaces correspond to affine algebraic curves. By \cite[Corollary 3.7]{hartshorne1977algebraic}, affine varieties $X$ and $Y$ over an algebraically closed field $k$ are isomorphic if and only if their respective coordinate rings are isomorphic as $k$-algebras. In our context, this implies that $$\mathcal{O}_{\mathbb{T}_1/D_2}\cong \mathcal{O}_{\mathbb{T}_2/D_2} \iff \mathbb{T}_1/D_2\cong \mathbb{T}_2/D_2.$$ Therefore, we have  
$$(\g\otimes_{\CC}\mathcal{O}_{\mathbb{T}_1})^{D_2}\cong (\g\otimes_{\CC}\mathcal{O}_{\mathbb{T}_2})^{D_2}\iff \g\otimes_{\CC}\mathcal{O}_{\mathbb{T}_1}^{D_2} \cong \g\otimes_{\CC}\mathcal{O}_{\mathbb{T}_2}^{D_2}\iff \mathbb{T}_1/D_2\cong \mathbb{T}_2/D_2,$$
where we have used Theorem \ref{Thm2} for the first equivalence. 

Recall $T=\CC/\LL$ and $T/D_2\cong \CC/\frac{1}{2}\LL\cong T$. Removing finite $D_2$-invariant subsets does not affect this quotient isomorphism, and we therefore have $\TT/D_2\cong \TT$ for any $\TT=T\setminus D_2\cdot S$ where $S\subset T$ is a finite subset. Therefore, $$\mathbb{T}_1/D_2\cong \mathbb{T}_2/D_2\iff \TT_1\cong \TT_2.$$

Finally, an isomorphism $\phi':\TT_1\rightarrow \TT_2$ extends to an isomorphism $\phi: T_1\rightarrow T_2$, see \cite[Problem 4.8]{schlag2014course} for a more general statement. In particular, this implies $\phi(D_2\cdot S_1)=D_2\cdot S_2$. Conversely, any isomorphism $\phi: T_1\rightarrow T_2$ with  $\phi(D_2\cdot S_1)=D_2\cdot S_2$ restricts to isomorphism $\TT_1\rightarrow \TT_2$. 
\end{proof}

The isomorphism from Theorem \ref{Thm2} for one orbit of punctures $D_2\cdot\{p\}\subset T=\CC/\LL$ and any representation $\rho$, can be expressed as $$\mf{A}(\g, \tau, \{p\}, \rho)\cong\g\otimes_{\CC} \CC[x,y]/(y^2-4x^3+g_2(\LL)x+g_3(\LL)),$$
where  $g _2(\LL),g_3(\LL)$ are the elliptic invariants of $T$. Recall that two complex tori $T_i=\CC/(\ZZ+\ZZ\tau_i)$ are isomorphic if and only if their moduli lie in the same $SL(2,\ZZ)$-orbit. That is, if and only if $[\tau_1]=[\tau_2]$. If $\g$ is nonabelian and $S_i=\{p_i\}\subset  T_i$, for $i=1,2$, and $\rho_1,\rho_2$ are any representations, then 
\begin{equation}\label{eq:isom-one-orbit}
\mf{A}(\g,\tau_1,\{p_1\},\rho_1)\cong \mf{A}(\g,\tau_2,\{p_2\},\rho_2) \iff [\tau_1]=[\tau_2].
\end{equation} 
This follows because $T_1\cong T_2$ if and only if $T_1\setminus \{p_1\}\cong T_2\setminus \{p_2\}$ for any choice of $p_i\in T_i$. In particular, for non-isomorphic complex tori we get non-isomorphic automorphic Lie algebras. Note that for fixed $\tau\in \mathbb{H}$, the automorphic Lie algebras $\mf{A}(\g,\tau,S_1)$ and $\mf{A}(\g,\tau,S_2)$, for arbitrary finite subsets $S_1,S_2\subset T$ are typically not isomorphic due to the strong condition on the relation between $S_1$ and $S_2$.

Theorem \ref{Thm2} shows in particular that for $\rho$ as defined in \eqref{rhoprime}, $\mf{A}(\mf{sl}(2,\CC), \tau, \{0\}, \rho)$ is a free $\mathcal{O}_{T\setminus  D_2\cdot \{0\}}^{D_2}$-module of rank 3 with basis $H=\Ad(\Omega)h,E=\Ad(\Omega)e$ and $F=\Ad(\Omega)f$. 
We now explicitly state the basis elements $H$, $E$ and $F$, where we make use of identities \eqref{Identity6}--\eqref{Identity8} to rewrite various expressions involving theta functions. 

Denote the theta zero values $\theta_j(0|\tau)$ by $\theta_j$ and $\theta_1'(0|\tau)$ by $\theta_1'$. For any matrix $M=\begin{psmallmatrix}a&b\\c&d \end{psmallmatrix}\in GL(2,\CC)$, the matrix of $\Ad(M)$ with respect to the basis $\mc{B}=(h,e,f)$, is given by $$[\mathrm{Ad}(M)]_{\mathcal{B}}=\frac{1}{\det(M)}\begin{pmatrix}bc+ad&-ac&bd\\
-2ab&a^2&-b^2\\
2cd&-c^2&d^2\end{pmatrix}.$$
We set $M=\Omega$, and obtain:
\begin{align}\label{Gen2}
H(z)&=\frac{1}{\theta_1'^2}\begin{pmatrix}
-\theta_2^2\mu_2(z)\mu_3(z)&\theta_4^2\mu_1(z)\mu_2(z)+\theta_3^2\mu_1(z)\mu_3(z)\\
\theta_4^2\mu_1(z)\mu_2(z)-\theta_3^2\mu_1(z)\mu_3(z) & \theta_2^2\mu_2(z)\mu_3(z)\end{pmatrix},\\
E(z)&=\frac{1}{2\theta_1'^3}\begin{pmatrix}
-\theta_2^4\mu_1(z) \left(\mu_2^2(z)-\pi^2\theta_3^4\right)& (\theta_4^2\mu_2(z)+\theta_3^2\mu_3(z))(\theta_1'^2+\theta_2^2\mu_2(z)\mu_3(z))\\
(\theta_4^2\mu_2(z)-\theta_3^2\mu_3(z))(-\theta_1'^2+\theta_2^2\mu_2(z)\mu_3(z))&\theta_2^4\mu_1(z) \left(\mu_2^2(z)-\pi^2\theta_3^4\right)
\end{pmatrix},\\
F(z)&=\frac{1}{2\theta_1'}\begin{pmatrix}
 \mu_1(z) &-\frac{\theta_3^2}{\theta_2^2} \mu_2(z)-\frac{\theta_4^2}{\theta_2^2} \mu_3(z)\\
\frac{\theta_3^2}{\theta_2^2} \mu_2(z)-\frac{\theta_4^2}{\theta_2^2} \mu_3(z) & - \mu_1(z)
\end{pmatrix},
\end{align}
where we remind the reader that $\mu_i(z)=\frac{\theta_1'(0|\tau)}{\theta_{i+1}(0|\tau)}\frac{\theta_{i+1}(2z|\tau)}{\theta_1(2z|\tau)}$.
Recall that $H(z),E(z),F(z)$ satisfy $$[H(z),E(z)]=2E(z),\quad [H(z),F(z)]=-2F(z),\quad [E(z),F(z)]=H(z),$$
for any $z\in T\setminus  D_2\cdot \{0\}$. Furthermore, we note that $H(-z)=H(z)$ and $E(-z)=-E(z)$, $F(-z)=-F(z)$, which follows from \eqref{eq:Trans3} or can be seen directly by inspection.

The generators $H(z), E(z), F(z)$ are given in terms of a uniformisation of $E_{r_1,r_2,r_3}$. We can write them intrinsically, in the sense that they only refer to the equations that define $E_{r_1,r_2,r_3}$, as follows.  
First, let us show that the matrix $\Omega(z)$ as defined in \eqref{def:Omega} has a more intrinsic, albeit possibly less transparent formulation. Introduce
\begin{equation}\label{def:intrinsic-omega}
\Omega(\lambda_2,\lambda_3)=\frac{1}{\sqrt{2(r_2-r_3)}}\begin{pmatrix} \left(-\frac{1}{A}\lambda_2-\frac{1}{B}\lambda_3\right)\sqrt{A\lambda_2-B\lambda_3} & \sqrt{A\lambda_2+B\lambda_3}\\  \left(\frac{1}{A}\lambda_2-\frac{1}{B}\lambda_3\right)\sqrt{A\lambda_2+B\lambda_3} & \sqrt{A\lambda_2-B\lambda_3}  \end{pmatrix},
\end{equation}
where $$A^2=\frac{r_3-r_1}{r_3-r_2}=\frac{\theta_3(0|\tau)^4}{\theta_2(0|\tau)^4}=\frac{1}{\lambda(\tau)},\quad B^2=\frac{r_2-r_1}{r_3-r_2}=\frac{\theta_4(0|\tau)^4}{\theta_2(0|\tau)^4}=1-\frac{1}{\lambda(\tau)}.$$
Notice that $\Omega(\lambda_2,\lambda_3)$ is not uniquely defined since $A$ and $B$ are only defined up to sign. 
The determinant of $\Omega(\lambda_2,\lambda_3)$ is unaffected by the sign choices; for any choice of signs of $A$ and $B$ we have 
\begin{align*}
\det\Omega(\lambda_2,\lambda_3)&=\frac{1}{2(r_2-r_3)}\left[(A\lambda_2-B\lambda_3) \left(-\frac{1}{A}\lambda_2-\frac{1}{B}\lambda_3\right)- (A\lambda_2+B\lambda_3) \left(\frac{1}{A}\lambda_2-\frac{1}{B}\lambda_3\right)\right]\\
&= \frac{1}{r_2-r_3}(\lambda_3^2-\lambda_2^2)\\
&=1.
\end{align*}

A direct computation reveals that $\Ad(\Omega(z))$, where $\Omega(z)$ defined in \eqref{def:Omega}, can be viewed as a parametrisation of $\Ad(\Omega(\lambda_2,\lambda_3))$ upon substituting $\lambda_i=\mu_i(z)$ and using the identities \eqref{Identity6} and \eqref{Identity7}. To illustrate the effect of changes of signs $A\mapsto -A$ or $B\mapsto -B$, let $\Omega_{\pm A,\pm B}$ denote the matrix $\Omega(\lambda_2,\lambda_3)$ with the indicated choice of sign. Then
\begin{align*}
\Ad(\Omega_{-A,B})&= \Ad(T_2)\Ad(\Omega_{A,B}),\\
\Ad(\Omega_{A,-B})&= \Ad(T_2)\Ad(\Omega_{A,B})\Ad(T_1),\\
\Ad(\Omega_{-A,-B})&= \Ad(\Omega_{A,B})\Ad(T_1),
\end{align*}
where $T_1$ and $T_2$ are defined in \eqref{def:rho'}, remain $D_2$-equivariant, see Remark \ref{rem:intertwiner}. 

Recall the coordinate ring $\CC[E_{r_1,r_2,r_3}]=\CC[\lambda_1,\lambda_2,\lambda_3]/I_{r_1,r_2,r_3}$ and let $\tilde{H}(\lambda_1,\lambda_2,\lambda_3)$, $\tilde{E}(\lambda_1,\lambda_2,\lambda_3)$ and $\tilde{F}(\lambda_1,\lambda_2,\lambda_3)$ be the images of $h,e$ and $f$ under $\Ad(\Omega(\lambda_2,\lambda_3))\in \A(\mf{sl}(2,\CC)\otimes_{\CC}\CC[E_{r_1,r_2,r_3}])$, respectively. Set $R_{ij}=r_i-r_j$. Then 

\begin{subequations}\label{def:HEFtilde}
\begin{align}
\tilde{H}&=\begin{pmatrix}
-\frac{1}{\sqrt{R_{12}}\sqrt{R_{13}}}\lambda_2 \lambda_3 &\frac{1}{\sqrt{R_{23}}\sqrt{R_{13}}}\lambda_1 \lambda_2 +\frac{1}{\sqrt{R_{12}}\sqrt{R_{23}}}\lambda_1 \lambda_3 \\
\frac{1}{\sqrt{R_{23}}\sqrt{R_{13}}}\lambda_1 \lambda_2 -\frac{1}{\sqrt{R_{12}}\sqrt{R_{23}}}\lambda_1 \lambda_3  & \frac{1}{\sqrt{R_{12}}\sqrt{R_{13}}}\lambda_2 \lambda_3 \end{pmatrix},\\[1em]
\tilde{E}&=\frac{1}{2}\begin{pmatrix}
-\frac{R_{23}}{R_{12}}\lambda_1\left(\frac{1}{R_{13}}\lambda_2^2+1\right)  &\left(-\frac{1}{\sqrt{R_{13}}}\lambda_2 -\frac{1}{\sqrt{R_{12}}}\lambda_3 \right)\tilde{\lambda}_-\\
 \left(\frac{1}{\sqrt{R_{13}}}\lambda_2 -\frac{1}{\sqrt{R_{12}}}\lambda_3 \right)\tilde{\lambda}_+ & \frac{R_{23}}{R_{12}}\lambda_1\left(\frac{1}{R_{13}}\lambda_2^2+1\right)
\end{pmatrix},\\[1em]
\tilde{F} &=\frac{1}{2R_{23}}\begin{pmatrix}
 \lambda_1  &- \sqrt{\frac{R_{13}}{R_{23}}} \lambda_2 - \sqrt{\frac{R_{12}}{R_{23}}} \lambda_3 \\
 \sqrt{\frac{R_{13}}{R_{23}}} \lambda_2 - \sqrt{\frac{R_{12}}{R_{23}}} \lambda_3  & - \lambda_1
\end{pmatrix},
\end{align}
\end{subequations}
where $\tilde{\lambda}_{\pm}= \sqrt{R_{23}}\pm \frac{ \sqrt{R_{23}}}{\sqrt{R_{12}}\sqrt{R_{12}}}\lambda_2 \lambda_3$.

Note that conjugating $h,e,f$ with $\Omega(\lambda_2,\lambda_3)$ results in matrices in which $\lambda_1$ appears, in addition to $\lambda_2$ and $\lambda_3$, due to the defining equations of $E_{r_1,r_2,r_3}$. 
One verifies that these matrices are equivariant with respect to the actions defined by $\rho$ and $\sigma$ as defined in  \eqref{rhoprime} and \eqref{ActionC2xC2}, respectively. Explicitly, letting $\tilde{X}\in \{\tilde{H},\tilde{E},\tilde{F}\}$, we have $$\tilde{X}(\sigma(t)(\lambda_1,\lambda_2,\lambda_3))=\rho(t)\tilde{X}(\lambda_1,\lambda_2,\lambda_3), \quad t\in D_2,$$
for all $(\lambda_1,\lambda_2,\lambda_3)\in E_{r_1,r_2,r_3}$. Thus, $\tilde{H}, \tilde{E}, \tilde{F}\in (\mf{sl}(2,\CC)\otimes_{\CC}\CC[E_{r_1,r_2,r_3}])^{D_2}$. 
Again, they of course satisfy $$[\tilde{H},\tilde{E}]=2\tilde{E},\quad [\tilde{H},\tilde{F}]=-2\tilde{F}, \quad [\tilde{E},\tilde{F}]=\tilde{H}.$$ 

\subsection{Real Lie algebras of invariants}

We now discuss real loci of elliptic curves and a real analogue of a previously considered automorphic Lie algebra based on $\mf{sl}(2,\CC)$. 

Let $I_{r_1,r_2,r_3}\subset \RR[\lambda_1,\lambda_2,\lambda_3]$ be the ideal generated by the polynomials $\lambda_1^2-\lambda_3^2-r_3+r_1$ and $\lambda_2^2-\lambda_3^2-r_3+r_2$, where $r_i\neq r_j$ for $i\neq j$. We focus on the fixed-point Lie subalgebras 
\begin{equation}\label{real-Lie-algebra}
(\mf{sl}(2,\RR)\otimes_{\RR}\RR[\lambda_1,\lambda_2,\lambda_3]/I_{r_1,r_2,r_3})^{D_2},
\end{equation}
where $D_2$ acts on $\mf{sl}(2,\RR)$ and on $\RR[\lambda_1,\lambda_2,\lambda_3]/I_{r_1,r_2,r_3}$ via the 
 restricted actions of \eqref{rhoprime} and \eqref{ActionC2xC2}, respectively.  
 
An elliptic curve $T=\CC/(\ZZ+\ZZ\tau)$ is said to admit a real structure if there exists an anti-holomorphic involution $s:T\rightarrow T$. We consider the standard real structure $s:T\rightarrow T$ given by $s(z)=\overline{z}$, where $\overline{z}$ is the complex conjugate of $z$. The set of real points of $T$ is then $$T_{\RR}:=\{z\in T: s(z)=z\}=\{z\in T: z=\overline{z}\}.$$
Let $p:\CC\rightarrow \CC/(\ZZ+\ZZ\tau)$ be the canonical projection, and identify $\RR\subset \CC$. Invariance under $s$ restricts the modulus $\tau$ to exactly two possibilities:
\begin{enumerate}
\item $\tau\in \mathrm{i}\RR_{>0}$ and $T_{\RR}=p(\RR)\cup p(\tfrac{\tau}{2}+\RR)$;
\item $\tau\in \tfrac{1}{2}+\mathrm{i}\RR_{>0}$ and  $T_{\RR}=p(\RR)$.
\end{enumerate}

We focus on case 1, the only case compatible with our framework, as we will explain below. Consider the real locus $(E_{r_1,r_2,r_3})_{\RR}$ of the real affine curve given by 
\begin{equation*}
  E_{r_1,r_2,r_3}:\begin{cases}
    \lambda_1^2-\lambda_3^2= r_3-r_1,\\
    \lambda_2^2-\lambda_3^2= r_3-r_2,
  \end{cases}
\end{equation*}
where $r_1,r_2,r_3\in \RR$ satisfy $\lambda(\tau)=\frac{r_3-r_2}{r_3-r_1}$ for some $\tau=\mathrm{i}c$, $c\in \RR_{> 0}$. The $D_2$-orbit $D_2\cdot\{0\}=\{0,\tfrac{1}{2},\tfrac{ic}{2},\tfrac{1+\mathrm{i}c}{2}\}$ is a subset of $T_{\RR}$ because $\tau=\mathrm{i}c$.
There is a real-analytic isomorphism $$T_{\RR}\setminus D_2\cdot\{0\}\cong (E_{r_1,r_2,r_3})_{\RR},$$ explicitly given by $z\mapsto (\mu_1(z),\mu_2(z),\mu_3(z))$, where the $\mu_i$ are defined in \eqref{mu}. Note that $T_{\RR}$ is homeomorphic to the disjoint union of two copies of $S^1:=\{z\in \CC:|z|=1\}$. It follows that the curve $(E_{r_1,r_2,r_3})_{\RR}$ consists of four connected components (two copies of $S^1$ both punctured at two distinct points). 

There is another case of elliptic curves possessing a real structure -- the case of $\tau\in \frac{1}{2}+\mathrm{i}\RR_{>0}$.
Let us remark that for such a $\tau$ the real part $T_{\RR}$ of $\CC/(\ZZ+\ZZ\tau)$ is homeomorphic to $S^1$. In this case, the curve $(E_{r_1,r_2,r_3})_{\RR}$ consists of two connected components. The set $D_2\cdot \{0\}=\{0,\tfrac{1}{2},\tfrac{1+\mathrm{i}c}{4},\tfrac{3+\mathrm{i}c}{4}\}$ is not contained in $T_{\RR}$; instead, we have $D_2\cdot \{0\}\cap T_{\RR}=\{0,\tfrac{1}{2}\}$, which consists of two points. The fact that $D_2\cdot \{0\}\not\subset T_{\RR}$ prevents us from considering \eqref{real-Lie-algebra} in the given context.

We now turn our attention to Lie-algebraic aspects assuming $\tau\in \mathrm{i}\RR_{>0}$. The elements $H(z),E(z)$ and $F(z)$ defined in \eqref{Gen2} behave as follows under $z\mapsto \overline{z}$: \begin{equation}\label{eq:conjugation}
X(\overline{z})=\overline{X(z)},\quad X=H,E,F.
\end{equation}
This follows from the identity $\theta_i(\overline{z}|\tau)=\overline{\theta_i(z|-\overline{\tau})}$, which holds for all $z\in \CC, \tau\in \mathbb{H}$, $i=1,\ldots,4$. Consequently, $\Omega(\overline{z})=\overline{\Omega(z)}$ for $\tau\in \mathrm{i}\RR_{>0}$, which establishes \eqref{eq:conjugation}. Note $\theta_j(z|\tau)$ is real-valued for real values of $z$ when $\tau$ is purely imaginary. In particular, it follows that $\mu_i(z+p)\in \RR$ for $z\in \RR$ and any $p\in D_2\cdot \{0\}$, using Lemma \ref{IsotypicalComponents}.

For explicit generators, consider $\tau=\mathrm{i}$. There is the well-known identity $$\theta_3(0|\mathrm{i})=\frac{\pi^{\frac{1}{4}}}{\Gamma(\tfrac{3}{4})},$$ where $
\Gamma(z)=\int_0^{\infty} t^{z-1}e^{-t}dt$, see, for example, \cite{yi2004theta}. 
Moreover, $\theta_2(0|\mathrm{i})=\theta_4(0|\mathrm{i})=[2^{-\frac{1}{4}}]\theta_3(0|\mathrm{i})$ (which follow from modularity properties of the theta functions \cite{kharchev2015theta} and \eqref{IdentityThetaConstants}). Substituting these expressions in \eqref{Gen2}, using also \eqref{IdentityThetaPrime}, we obtain 
\begin{align*}
H(z)&=\frac{\Gamma(\frac{3}{4})^4}{\pi^3}\begin{pmatrix}
-\sqrt{2}\mu_2(z) \mu_3(z) &\sqrt{2}\mu_1(z) \mu_2(z) +2\mu_1(z) \mu_3(z) \\
\sqrt{2}\mu_1(z) \mu_2(z) -2\mu_1(z) \mu_3(z)  & \sqrt{2}\mu_2(z) \mu_3(z) \end{pmatrix},\\[1em]
E(z)&=\frac{\sqrt{2}\Gamma(\frac{3}{4})^5}{\pi^{\frac{17}{4}}}\begin{pmatrix}
-\frac{1}{2}\mu_1(z)\left(\mu_2^2(z)-\frac{\pi^3}{\Gamma(\frac{3}{4})^4}\right)  & \left(-\frac{1}{\sqrt{2}}\mu_2(z) -\mu_3(z) \right)\tilde{\mu}_{+}(z)\\
 \left(\frac{1}{\sqrt{2}}\mu_2(z) -\mu_3(z) \right)\tilde{\mu}_-(z)& \frac{1}{2}\mu_1(z)\left(\mu_2^2(z)-\frac{\pi^3}{\Gamma(\frac{3}{4})^4}\right) 
\end{pmatrix},\\[1em]
F(z)&=\frac{\Gamma(\frac{3}{4})^3}{\sqrt{2}\pi^{\frac{7}{4}}}\begin{pmatrix}
 \mu_1(z)  &- \sqrt{2} \mu_2(z) - \mu_3(z) \\
\sqrt{2}\mu_2(z) - \mu_3(z)  & - \mu_1(z)
\end{pmatrix},
\end{align*}
where $\tilde{\mu}_{\pm}(z)=\pm \frac{\pi^3}{\sqrt{2}\Gamma(\frac{3}{4})}+\mu_2(z) \mu_3(z)$. In particular, $H(z),E(z)$ and $F(z)$ are $\mf{sl}(2,\RR)$-valued for $z \in T_{\RR}\setminus D_2\cdot\{0\} \subset \CC/(\ZZ+\ZZ \mathrm{i})$. 

For the general case $\tau=\mathrm{i}c$, $c\in \RR_{>0}$, let $\mc{O}_{r_1,r_2,r_3}=\RR[\lambda_1,\lambda_2,\lambda_3]/I_{r_1,r_2,r_3}$. Recall that there is a natural action of $D_2$ on $\mf{sl}(2,\RR)\otimes_{\RR}\mc{O}_{r_1,r_2,r_3}$, defined by the homomorphisms $\rho: D_2\rightarrow \A(\mf{sl}(2,\RR))$ and $\sigma:D_2\rightarrow \A(E_{r_1,r_2,r_3})$ given by $$\rho(t_1)=\Ad\begin{pmatrix}
1&0\\0&-1
\end{pmatrix},\quad \rho(t_2)=\Ad\begin{pmatrix}
0&1\\1&0
\end{pmatrix},$$ and $$\sigma(t_1)(\lambda_1,\lambda_2,\lambda_3)=(\lambda_1,-\lambda_2,-\lambda_3),\quad \sigma(t_2) (\lambda_1,\lambda_2,\lambda_3)=(-\lambda_1,-\lambda_2,\lambda_3).$$
Classical invariant theory gives $$\RR[\lambda_1,\lambda_2,\lambda_3]^{D_2}=\RR[\lambda_1^2,\lambda_2^2,\lambda_3^2,\lambda_1\lambda_2\lambda_3].$$ 
Setting  $x=\lambda_1^2+r_1=\lambda_2^2+r_2=\lambda_3^2+r_3$ and $y=\lambda_1\lambda_2\lambda_3$, we have  $y^2=(x-r_1)(x-r_2)(x-r_3)$ and an isomorphism of rings $$\mc{O}_{r_1,r_2,r_3}^{D_2}\cong \RR[x,y]/(y^2-(x-r_1)(x-r_2)(x-r_3)).$$
Restricting $\Ad(\Omega)$ to $T_{\RR}\setminus D_2\cdot\{0\}$ yields a real-analytic map $T_{\RR}\setminus D_2\cdot\{0\}\rightarrow \A(\mf{sl}(2,\RR))$, which in the algebraic formulation is a map on $ (E_{r_1,r_2,r_3})_{\RR}$. 
By similar reasoning as before -- using the intrinsic interpretation $\Omega(\lambda_2,\lambda_3)$ in \eqref{def:intrinsic-omega} --  $\Ad(\Omega)$ defines an $\mathcal{O}_{r_1,r_2,r_3}$-linear automorphism of $\mf{sl}(2,\RR)\otimes_{\RR}\mathcal{O}_{r_1,r_2,r_3}$. 
Moreover, since $\Ad(\Omega)$ is still $D_2$-equivariant in the real setting, we obtain
\begin{equation*}
(\mf{sl}(2,\RR)\otimes_{\RR}\mc{O}_{r_1,r_2,r_3})^{D_2}= \mc{O}_{r_1,r_2,r_3}^{D_2}\left\langle \tilde{H},\tilde{E},\tilde{F} \right\rangle.
\end{equation*}
Thus we have an isomorphism $(\mf{sl}(2,\RR)\otimes_{\RR}\mc{O}_{r_1,r_2,r_3})^{D_2}\cong \mf{sl}(2,\RR)\otimes_{\RR}\mc{O}_{r_1,r_2,r_3}^{D_2}$ of real Lie algebras and $\mc{O}_{r_1,r_2,r_3}^{D_2}$-modules, and an isomorphism
$$(\mf{sl}(2,\RR)\otimes_{\RR}\mc{O}_{r_1,r_2,r_3})^{D_2}\cong \mf{sl}(2,\RR)\otimes_{\RR}\RR[x,y]/(y^2-(x-r_1)(x-r_2)(x-r_3))$$
of real Lie algebras.

The Lie algebras $(\mf{sl}(2,\RR)\otimes_{\RR}\mc{O}_{r_1,r_2,r_3})^{D_2}$ do not, strictly speaking, belong to the class of automorphic Lie algebras since we are working over $\RR$. 

We summarise what we have found for the $D_2$-invariant Lie algebras with base Lie algebra $\mf{sl}(2,\RR)$. Recall that the real locus of the real elliptic curve $E_{r_1,r_2,r_3}$ has four connected components precisely when $\tau$ is purely imaginary, with $\lambda(\tau)=\frac{r_3-r_2}{r_3-r_1}$.

\begin{Proposition}
Consider the real locus $(E_{r_1,r_2,r_3})_{\RR}$ of the affine curve 
\begin{equation*}
  E_{r_1,r_2,r_3}:\begin{cases}
    \lambda_1^2-\lambda_3^2= r_3-r_1,\\
    \lambda_2^2-\lambda_3^2= r_3-r_2,
  \end{cases}
\end{equation*}
in $\mathbb{C}^3$. 
 If $(E_{r_1,r_2,r_3})_{\RR}$ has four connected components,  then $$(\mf{sl}(2,\RR)\otimes_{\RR}\mathcal{O}_{r_1,r_2,r_3})^{D_2}=\mc{O}_{r_1,r_2,r_3}^{D_2}\left\langle \tilde{H},\tilde{E},\tilde{F} \right\rangle,$$
where $D_2$ acts on $\mf{sl}(2,\RR)$ and on $(E_{r_1,r_2,r_3})_{\RR}$ as above, and $\tilde{H},\tilde{E},\tilde{F}$ are defined in \eqref{def:HEFtilde}. In particular, there is an isomorphism $(\mf{sl}(2,\RR)\otimes_{\RR}\mc{O}_{r_1,r_2,r_3})^{D_2}\cong \mf{sl}(2,\RR)\otimes_{\RR}\mc{O}_{r_1,r_2,r_3}^{D_2}$ of real Lie algebras and $\mc{O}_{r_1,r_2,r_3}^{D_2}$-modules, and an isomorphism of real Lie algebras $$(\mf{sl}(2,\RR)\otimes_{\RR}\mathcal{O}_{r_1,r_2,r_3})^{D_2}\cong \mf{sl}(2,\RR)\otimes_{\RR}\RR[x,y]/(y^2-(x-r_1)(x-r_2)(x-r_3)).$$ 
\end{Proposition}

\section{Normal forms of Uglov's algebra and Holod's algebra}\label{sec:NormalForms}
In this section, we discuss two infinite-dimensional Lie algebras that have appeared in the context of integrable systems. The first one is the Lie algebra $\mc{E}_{k,\nu^{\pm}}$ introduced by Uglov in \cite{uglov1994lie}, which has a known realisation in terms of elliptic automorphic Lie algebras. The second one is the well-known hidden symmetry algebra of the Landau--Lifshitz equation introduced by Holod in \cite{holod1987hidden}. This Lie algebra appears frequently in the Adler--Kostant--Symes (AKS) scheme in the construction of integrable PDEs \cite{skrypnyk2012quasi, skrypnyk2025reduction}.

Using the fact that  $\mc{E}_{k,\nu^{\pm}}$ can be realised as an elliptic automorphic Lie algebra, and using results from Section \ref{sec:Basics}, we obtain a new normal form of this algebra. This normal form reveals that $\mc{E}_{k,\nu^{\pm}}$ is in fact an elliptic current algebra. More precisely, we show that there is a Lie algebra isomorphism $\mc{E}_{k,\nu^{\pm}}\cong \mf{sl}(2,\CC)\otimes_{\CC}R_{k,\nu^{\pm}}$, where $R_{k,\nu^{\pm}}$ is a ring of functions on an elliptic curve with modulus $k$. 
Furthermore, we show that Holod's algebra can be interpreted as an elliptic automorphic Lie algebra with symmetry group $D_2$ and with, generically, three orbits of punctures. We present a basis for Holod's algebra which reveals it to be isomorphic to an elliptic current algebra $\mf{sl}(2,\CC)\otimes_{\CC}R'$, where $R'$ is a ring of functions defined on an elliptic curve. 
To our knowledge, the identification of both Uglov's and Holod's algebra as elliptic current algebras has not previously been observed. 

We begin revisiting the $D_2$-automorphic Lie algebra based on $\mf{sl}(2,\CC)$ with precisely one orbit of punctures as defined in Section \ref{sec:Basics}, namely 
$$\mf{A}(\tau, \{0\}):=\mf{A}(\mf{sl}(2,\CC), \tau, \{0\},\rho)=(\mf{sl}(2,\CC)\otimes_{\CC}\mc{O}_{T\setminus D_2\cdot \{0\}})^{\rho\otimes \tilde{\sigma}(D_2)},$$
 where $T=\CC/\LL$, $\LL=\ZZ+\ZZ\tau$, and where $\rho$ is defined in \eqref{rhoprime}.  By \cite[Theorem 6.20]{knibbeler2024classification} (or Theorem \ref{Thm2}) we know that 
 $$\mf{A}(\tau, \{0\})\cong \mf{sl}(2,\CC)\otimes_{\CC}\CC[\wp_{\LL},\wp_{\LL}'].$$ 
 We now compute a set of three elements that generate $\mf{A}( \tau, \{0\})$ as a Lie algebra over $\CC$. That is, $$\mf{A}(\tau, \{0\})=\CC\langle X_1,X_2,X_3\rangle$$ for suitable elements $X_1,X_2,X_3\in \mf{A}(\tau, \{0\})$. This will be used repeatedly in the present section and in Section \ref{sec:LL}.

Consider the following basis elements of $\mf{sl}(2,\CC)$:
\begin{align}\label{vi}
v_1=\frac{1}{2}\begin{pmatrix}
\mathrm{i}&0\\0&-\mathrm{i}
\end{pmatrix},\quad v_2=\frac{1}{2}\begin{pmatrix}
0&1\\-1&0
\end{pmatrix},\quad v_3=\frac{1}{2}\begin{pmatrix}
0&\mathrm{i}\\\mathrm{i}&0
\end{pmatrix},
\end{align}
so that $[v_i,v_j]=\varepsilon_{ijk}v_k$, where $\varepsilon_{ijk}$ is the totally antisymmetric tensor. The isotypical components of the action \eqref{rhoprime} of $D_2$ on $\mf{sl}(2,\CC)$ are given by 
\begin{equation}\label{IsotypicalCompsl2}
\mf{sl}(2,\CC)^{\alpha_{00}}=\{0\},\quad \mf{sl}(2,\CC)^{\alpha_{10}}=\CC v_3,\quad \mf{sl}(2,\CC)^{\alpha_{01}}=\CC v_1,\quad \mf{sl}(2,\CC)^{\alpha_{11}}=\CC v_2,
\end{equation}
where the $\alpha_{ij}$ are the characters of $D_2$. In particular, the only element of $\mf{sl}(2,\CC)$ fixed by $D_2$ is $0$.

Introduce the elements
\begin{align}\label{gen:X}
X_i=v_{i}\otimes \mu_{i},\quad X_i'=v_{i}\otimes \mu_j\mu_k,
\end{align}
where $(i,j,k)$ is a cyclic permutation of $(1,2,3)$. 
These elements are invariants with respect to the action defined by $\rho\otimes \tilde{\sigma}$ where we recall that $\rho$ is defined in \eqref{rhoprime} and $\sigma$ in  \eqref{def:Gamma-action} (also, recall that $\tilde{\sigma}$ is the induced action of $D_2$ on $\ot$). In other words, 
\begin{equation}\label{invariance}
\rho(t_j)\otimes\tilde{\sigma}(t_j)(X_i)=X_i,\quad \rho(t_j)\otimes\tilde{\sigma}(t_j)(X_i')=X_i'
\end{equation} 
for $i=1,2,3$ and $j=1,2$. Notice that $[X_i,X_j]=\varepsilon_{ijk}X_k'.$

We now show that the automorphic Lie algebra $\mf{A}(\tau, \{0\})$ is a free module of rank 6 over $\CC[\wp_{\frac{1}{2}\LL}]$, with basis  $\{X_i, X_i': i=1,2,3\}$. Suppose that $V,W$ are (complex) representations of a finite abelian group $\Gamma$, with characters $\chi_i$, $i=1,\ldots, |\Gamma|$. Let $V^{\chi_i}$ denotes the $\chi_i$-isotypical component of $V$.  Then the $\Gamma$-invariant subspace of the tensor product decomposes as $(V\otimes_{\CC}W)^{\Gamma}=\bigoplus_{i=1}^{|\Gamma|}V^{\chi_i}\otimes_{\CC}W^{\overline{\chi}_i}$, with $\overline{\chi}_i$ the dual character of $\chi_i$. Therefore, by Lemma \ref{IsotypicalComponents} and \eqref{IsotypicalCompsl2}, 
\begin{align*}
\mf{A}(\tau, \{0\})&=\bigoplus_{i,j=0}^1\mf{sl}(2,\CC)^{\alpha_{ij}}\otimes_{\CC}\mc{O}_{T\setminus D_2\cdot \{0\}}^{\overline{\alpha_{ij}}}\\
&=\bigoplus_{(i,j,k)\in \mathrm{Cycl}}\CC v_i\otimes (\CC[\wp_{\frac{1}{2}\LL}]\mu_i\oplus \CC[\wp_{\frac{1}{2}\LL}]\mu_j\mu_k)\\
&=\bigoplus_{i=1}^3\CC[\wp_{\frac{1}{2}\LL}]X_i \oplus \CC[\wp_{\frac{1}{2}\LL}]X_i',
\end{align*}
where $\mathrm{Cycl}$ denotes the set of cyclic permutations of $(1,2,3)$.
From this, it becomes clear that the elements $\wp_{\frac{1}{2}\LL}^kX_i$ and $\wp_{\frac{1}{2}\LL}^kX_i'$  ($k\geq 0$, $i=1,2,3$) form a $\CC$-basis of $\mf{A}(\tau, \{0\})$.
Equivalently,
\begin{equation}\label{dsum}
\mf{A}(\tau, \{0\})=\bigoplus_{(i,j,k)\in \mathrm{Cycl}}\CC[\wp_{\frac{1}{2}\LL}]X_i \oplus \CC[\wp_{\frac{1}{2}\LL}][X_j,X_k].
\end{equation}
We have established, via a direct computation, that $\mf{A}(\tau, \{0\})$ is a free module of rank 6 over the ring $\CC[\wp_{\frac{1}{2}\LL}]\subsetneq \mathcal{O}_{T\setminus D_2\cdot \{0\}}^{D_2}$. By \cite[Theorem 6.20]{knibbeler2024classification} (or Theorem \ref{Thm2}), this automorphic Lie algebra is in fact a free module of rank 3 over the full ring of invariants $\mathcal{O}_{T\setminus D_2\cdot \{0\}}^{D_2}=\CC[\wp_{\frac{1}{2}\LL},\wp_{\frac{1}{2}\LL}']$ -- a fact that is not immediately visible from the computation above. Instead, we had to construct an intertwining operator $\Ad(\Omega)$ to establish this fact, see Lemma \ref{Lemma3}.

We now show that it follows from \eqref{dsum} that the elements $X_1,X_2$ and $X_3$ in \eqref{gen:X} generate $\mf{A}(\tau, \{0\})$ as a Lie algebra over $\CC$. This alternative characterisation will be important when relating the present algebra to 
the Wahlquist--Estabrook algebra of the Landau--Lifshitz equation. 
\begin{Corollary}\label{cor:Cor}
As a Lie algebra, $\mf{A}(\tau, \{0\})=\CC\langle X_1,X_2,X_3\rangle$. The generators satisfy 
$$[X_i,[X_j,X_k]]=0, \quad [X_i,[X_i,X_k]]-[X_j,[X_j,X_k]]=(r_j-r_i)X_k,$$
for any cyclic permutation $(i,j,k)$ of $(1,2,3)$, where the $r_i$ are related to $\lambda(\tau)$ as in \eqref{eq:mod_lambda}.
\end{Corollary}
\begin{proof}
Let $\mf{A}=\CC\langle X_1,X_2,X_3\rangle$, the Lie algebra generated by $X_1,X_2,X_3$ over $\CC$, and write $\wp=\wp_{\frac{1}{2}\LL}$. Recall that $\frac{1}{4}\wp=\mu_i^2+e_i$, $i=1,2,3$, and that $[X_i,X_j]=\varepsilon_{ijk}X_k'$, where $X_k'$ is defined in \eqref{gen:X}. For simplicity, we drop the $\cdot$ in the $\ot^{D_2}$-module notation $g\cdot (A\otimes f)=A\otimes (gf)$.

Since $$[X_i,X_j']=\varepsilon_{ijk}\mu_i^2X_k=\varepsilon_{ijk}\frac{1}{4}(\wp-e_i)X_k,$$ it follows that $\wp X_i\in \mf{A}.$ Similarly, $\wp^mX_i\in \mf{A}$ for all $m\in \ZZ_{\geq 2}$ and $i=1,2,3$. By the same argument, $\wp^mX_i'\in \mf{A}$ for any $m\in \ZZ_{\geq 0}$ and $i=1,2,3.$ The first claim then follows from \eqref{dsum}. 

The first bracket relation holds because $[X_i,X_i']=0$ for $i=1,2,3$.
For the second relation, we compute $[X_i,[X_i,X_k]]=[X_i,X_j']= \mu_{i}^2X_k,$ and hence $$[X_i,[X_i,X_k]]-[X_j,[X_j,X_k]]=(\mu_i^2-\mu_j^2)X_k=(r_j-r_i)X_k.$$
\end{proof}

We will need the following properties of the automorphic Lie algebras $\mf{A}(\mf{sl}(2,\CC), \tau, S)$. 
\begin{Lemma}\label{lem:decomp}
Let $S,S'\subset T=\CC/(\ZZ+\ZZ\tau)$ be finite sets.
\begin{enumerate}
\item[i)]  If $S\subset S'$, then $\mf{A}(\mf{sl}(2,\CC), \tau, S)\subset \mf{A}(\mf{sl}(2,\CC), \tau, S')$.   
\item[ii)] If $S\cap S'=\emptyset$, then $\mf{A}(\mf{sl}(2,\CC), \tau, S)\cap \mf{A}(\mf{sl}(2,\CC), \tau, S')=\{0\}$. 
\item[iii)] $\mf{A}(\mf{sl}(2,\CC), \tau,S)=\bigoplus_{p\in S}\mf{A}(\mf{sl}(2,\CC), \tau, \{p\})$
as a direct sum of Lie subalgebras. 
\end{enumerate}
\end{Lemma}
\begin{proof}
{\it i)} This follows from the definition of $\mf{A}(\mf{sl}(2,\CC), \tau, S)$.

 {\it ii)} Recall that $\mf{sl}(2,\CC)^{D_2}=\mf{sl}(2,\CC)^{\alpha_{00}}=\{0\}$, see \eqref{IsotypicalCompsl2}. In particular, any nonzero element $a\in \mf{A}(\mf{sl}(2,\CC), \tau,S)$ cannot be constant and must therefore have poles contained in $S$. Hence, if $S\cap S'=\emptyset$, $a$ cannot belong to $\mf{A}(\mf{sl}(2,\CC), \tau,S')$.

 {\it iii)} we note that by part {\it i)}, we have $$\bigoplus_{p\in S}\mf{A}(\mf{sl}(2,\CC), \tau, \{p\})\subset \mf{A}(\mf{sl}(2,\CC), \tau,S).$$ 
Assume again without loss of generality that $0\in S$. The reverse inclusion follows from the relations 
\begin{equation*}
\mu_i(z)\mu_j(z-p)=c_1 \mu_{k}(z)+c_2 \mu_{k}(z-p), \quad p\not\in D_2\cdot \{0\},
\end{equation*}
for certain $c_1,c_2\in \CC^*$ depending on $i,j$, and where $(i,j,k)$ is a permutation of $(1,2,3)$. These relations can be proved by taking the divisor $$D:=\sum_{\gamma\in D_2}(\gamma\cdot 0)+\sum_{\gamma\in D_2}(\gamma\cdot p)$$ on $T$. Since $\deg(D)=8$, it follows from Riemann-Roch that $\dim L(D)=8$. The space $L(D)$ is spanned by the eight functions $1, \mu_1(\mu_1\circ t_p), \mu_i, \mu_i\circ t_p$, for $i=1,2,3$, where we recall that $t_p(z)=z-p$. Now, for $p\neq 0$, the function $\mu_i(\mu_j\circ t_p)$ belongs to $L(D)$, and by considering how it transforms under $D_2$ and by comparing its poles, we see that it must be a $\CC$-linear combination of $\mu_k$ and $\mu_k\circ t_p$. 

Hence, any element $a\in \mf{A}(\mf{sl}(2,\CC), \tau, S)$ can be decomposed as $a=\sum_{p\in S} a^{(p)}$, where $a^{(p)}\in \mf{A}(\mf{sl}(2,\CC), \tau, \{p\})$.
\end{proof}

We now focus on the automorphic Lie algebra $(\mf{sl}(2,\CC)\otimes_{\CC}\mc{O}_{\TT})^{\rho\otimes\tilde{\sigma}(D_2)}$, where $\TT$ is the torus $T=\CC/\LL=\CC/(\ZZ+\ZZ\tau)$ punctured at two distinct $D_2$-orbits of points $\nu^+$ and $\nu^-$. This algebra appears in the context of quantisation and elliptic R-matrices \cite{uglov1994lie}.

 Let $\nu^+,\nu^-\in T$ satisfy $\nu^+-\nu^-\not\in D_2\cdot \{0\}$.  Uglov \cite{uglov1994lie} defines a complex Lie algebra $\mc{E}_{k,\nu^{\pm}}$ generated by six elements $\{x_i^{\pm}\}_{i=1,2,3}$, subject to the relations
\begin{align*}
[x_i^{\pm},[x_j^{\pm},x_k^{\pm}]]&=0,  \\ 
[x_i^{\pm},[x_i^{\pm},x_k^{\pm}]]-[x_j^{\pm},[x_j^{\pm},x_k^{\pm}]]&=J_{ij}x_k^{\pm},\\
[x_i^{+}, x_i^{-}] &=0,\\
[x_i^{\pm},x_j^{\mp}]&=\sqrt{-1}(w_i(\nu^{\mp}-\nu^{\pm})x_k^{\mp}-w_j(\nu^{\mp}-\nu^{\pm})x_k^{\pm}),
\end{align*}
where $(i,j,k)$ is any cyclic permutation of $(1,2,3)$, and 
\begin{equation*}
w_1(z)=\frac{1}{\mathrm{sn}(z)},\quad w_2(z)=\frac{\mathrm{dn}(z)}{\mathrm{sn}(z)}, \quad w_3(z)=\frac{\mathrm{cn}(z)}{\mathrm{sn}(z)},
\end{equation*} 
are the Jacobi elliptic functions $\mathrm{sn}$, $\mathrm{cn}$ and $\mathrm{dn} $ of modulus $k=\frac{\theta_2^2(0|\tau)}{\theta_3^2(0|\tau)}$ \cite[Chapter VII, \S 1]{chandrasekharan2012elliptic}. These functions satisfy $w_i(z)^2-w_j(z)^2=J_{ij}$, with $J_{12}=k^2$, $J_{23}=1-k^2$ and $J_{31}=-1$.  
 
The algebra $\mc{E}_{k,\nu^{\pm}}$ admits an elliptic realisation $\tilde{\mc{E}}_{k,\nu^{\pm}}$, which coincides with the $\mf{sl}(2,\CC)$-automorphic Lie algebra on the punctured complex torus $\TT=T\setminus  D_2\cdot  \{\nu^+,\nu^-\}$. The $ D_2$-action is defined in \eqref{def:Gamma-action}. That is, $$\tilde{\mc{E}}_{k,\nu^{\pm}}=(\mf{sl}(2,\CC)\otimes_{\CC}\mc{O}_{\TT})^{ D_2}\cong \mc{E}_{k,\nu^{\pm}}.$$ 

The algebra $\mc{E}_{k,\nu^{\pm}}$ decomposes as a direct sum of Lie subalgebras $$\mc{E}_{k,\nu^{\pm}}=\mc{E}^{+}\oplus\mc{E}^{-},$$ where $\mc{E}^{+}$ and $\mc{E}^-$ are generated by $\{x_i^{+}\}_{i=1,2,3}$ and $\{x_i^{-}\}_{i=1,2,3}$, respectively \cite[Theorem]{uglov1994lie}. 

We will need the following result, which will also play an important role in Section \ref{sec:LL}. Recall the Lie algebra $\mf{sl}(2,\CC)\otimes_{\CC} \CC[E_{r_1,r_2,r_3}]$, where $\CC[E_{r_1,r_2,r_3}]$ denotes the coordinate ring of the elliptic curve $E_{r_1,r_2,r_3}$. Also, recall the basis $v_1,v_2,v_3$ of $\mf{sl}(2,\CC)$ as defined in \eqref{vi}.

\begin{Proposition}\label{prop:isom_pi}
The Lie subalgebra of $\mf{sl}(2,\CC)\otimes_{\CC} \CC[E_{r_1,r_2,r_3}]$ generated by $$v_1\otimes \lambda_1, \quad v_2\otimes \lambda_2, \quad v_3\otimes \lambda_3,$$
is isomorphic to the complex Lie algebra generated by $p_1,p_2,p_3$ with relations 
$$
[p_i,[p_j,p_k]]=0,\quad 
[p_i,[p_i,p_k]]-[p_j,[p_j,p_k]]=(r_j-r_i)p_k, 
$$
where $(i,j,k)$ is a cyclic permutation of $(1,2,3)$. In particular, we have 
$$\CC\langle p_1,p_2,p_3\rangle \cong (\mf{sl}(2,\CC)\otimes_{\CC}\mc{O}_{T\setminus D_2\cdot \{0\}})^{ D_2}\cong \mf{sl}(2,\CC)\otimes_{\CC}\CC[\wp_{\LL},\wp_{\LL}'].$$
\end{Proposition}
\begin{proof}
The first isomorphism follows from  \cite[Theorem 3.2]{roelofs1993prolongation}. The second is obtained by identifying $v_i\otimes \lambda_i$ with $X_i=v_i\otimes \mu_i$, where $\mu_i$ uniformises $E_{r_1,r_2,r_3}$, and using Corollary \ref{cor:Cor}.
\end{proof}

\begin{Remark}\label{rem:R}
The Lie algebra generated by the $v_i\otimes \lambda_i$ is identical to the Lie algebra $\mf{R}_{r_1,r_2,r_3}$ as defined in \cite{IGONIN2020103596}, and denoted by $R$ in \cite{roelofs1993prolongation, igonin2002prolongation}. This Lie algebra appears  in connection to the prolongation algebras of the fully anisotropic Landau--Lifshitz equation and the non-singular Krichever--Novikov equation, as we will explain in more detail in Section \ref{sec:LL}. It follows from Proposition \ref{prop:isom_pi} that $\mf{R}_{r_1,r_2,r_3}$ is nothing but $\mf{sl}(2,\CC)$ with the scalars replaced by functions in $\mc{O}_{T\setminus D_2\cdot \{0\}}^{ D_2}\cong \CC[\wp_{\LL},\wp_{\LL}']$.
\end{Remark}

It follows from Proposition \ref{prop:isom_pi} that $\CC\langle X_1,X_2,X_3\rangle\cong \mc{E}^{\pm}$ as Lie algebras. Hence, $$ \mf{A}(\tau, \{0\})\cong \mc{E}^{\pm}.$$

Note that due to the identification of $\mc{E}_{k,\nu^{\pm}}$ with $\mf{A}(\tau, \{\nu^+,\nu^-\})$, the decomposition $\mc{E}_{k,\nu^{\pm}}=\mc{E}^{+}\oplus\mc{E}^{-} $ also follows directly from Lemma \ref{lem:decomp} by taking $S=\{\nu^+,\nu^-\}$.

By Theorem \ref{Thm2}, we have $$\tilde{\mc{E}}_{k,\nu^{\pm}}\cong \mf{sl}(2,\CC)\otimes_{\CC}\CC[\tilde{\wp},\tilde{\wp}',\xi],$$
where $\tilde{\wp}:=\wp_{\frac{1}{2}\LL}\circ t_{\nu^-}$, and where $\xi(z)=\mu_1(z-\nu^+)\mu_1(z-\nu^-)$. Corollary \ref{cor:isomorphism_classes} shows that $\tilde{\mc{E}}_{k_1,\nu_{1}^{\pm}}\cong \tilde{\mc{E}}_{k_2,\nu_2^{\pm}}$ if and only if there exists an isomorphism of complex tori $\phi:T_{k_1}\rightarrow T_{k_2}$ between complex tori with moduli $k_1$ and $k_2$ such that $\phi(D_2\cdot\{\nu_1^-,\nu_1^+\})=D_2\cdot\{\nu_2^-,\nu_2^+\}$. We summarise the above in the next theorem. 

\begin{Theorem}\label{thm:uglov}
The Lie algebra $\tilde{\mc{E}}_{k,\nu^{\pm}}$ admits the normal form 
$$\tilde{\mc{E}}_{k,\nu^{\pm}}=\CC[\twp,\twp',\xi]\left\langle H,E,F\right\rangle,$$
where $\tilde{\wp}=\wp_{\frac{1}{2}\LL}\circ t_{\nu^-}$ and $\xi(z)=\mu_1(z-\nu^+)\mu_1(z-\nu^-)$, and $$H(z)=\omega(z-\nu^-)h, \quad E(z)=\omega(z-\nu^-)e, \quad F(z)=\omega(z-\nu^-)f,$$ with $\omega(z)=\Ad(\Omega(z))$ and $\Omega(z)$ defined in \eqref{def:Omega}.
Consequently, there is an isomorphism of Lie algebras
$$\mc{E}_{k,\nu^{\pm}}\cong \tilde{\mc{E}}_{k,\nu^{\pm}}\cong \mf{sl}(2,\CC)\otimes_{\CC}\CC[\tilde{\wp},\tilde{\wp}',\xi].$$
Moreover, $\mc{E}_{k_1,\nu_{1}^{\pm}}\cong \mc{E}_{k_2,\nu_2^{\pm}}$ if and only if  there exists an isomorphism $\phi$ between the corresponding complex tori such that $\phi(D_2\cdot \{\nu_1^-,\nu_1^+\})=D_2\cdot \{\nu_2^-,\nu_2^+\}$.
\end{Theorem}

We now turn our attention to the hidden symmetry algebra of the Landau--Lifshitz equation introduced by Holod in \cite{holod1987hidden}. 
Recall the basis $v_1,v_2,v_3$ of $\mf{sl}(2,\CC)$ defined in \eqref{vi} and recall the curve $E_{r_1,r_2,r_3}$ given by $\lambda_i^2-\lambda_j^2=r_j-r_i$, for $i,j=1,2,3$. Let $A_i=r_i-\frac{1}{3}(r_1+r_2+r_3)$ and $\lambda=\frac{1}{3}(\lambda_1^2+\lambda_2^2+\lambda_3^2)$ so that $\lambda=\lambda_i^2+A_i$, independent of $i$.

The Holod algebra is the complex Lie algebra $\mc{H}_{r_1,r_2,r_3}$ defined on $E_{r_1,r_2,r_3}$, with basis 
\begin{equation}\label{holod_basis}
X_{i}^{2n+2}=\lambda^n\lambda_{j}\lambda_kv_i,\quad X_{i}^{2m+1}=\lambda^m\lambda_{i}v_i, \quad (n,m\in \ZZ),
\end{equation}
where $(i,j,k)$ is a cyclic permutation of $(1,2,3)$. For clarity, we write $\lambda_iv_j$ instead of $v_i\otimes \lambda_j$.  
The Lie structure is given by 
\begin{align*}
[X_i^{2l+1}, X_j^{2s+1}]&= \varepsilon_{ijk}X_k^{2(l+s)+2}, \\
[X_i^{2l+1}, X_j^{2s}]&=\varepsilon_{ijk}\big (X_k^{2(l+s)+1}-A_iX_k^{2(l+s)-1}\big), \\
[X_i^{2l}, X_j^{2s}]&= \varepsilon_{ijk}\big(X_k^{2(l+s)}-A_kX_k^{2(l+s)-2}\big),
\end{align*}
where $l,s\in \ZZ$.

 We can realise $\mc{H}_{r_1,r_2,r_3}$ as an automorphic Lie algebra on $E_{r_1,r_2,r_3}$ with symmetry group $D_2$. This perspective allows us to establish a Lie algebra isomorphism $$\mc{H}_{r_1,r_2,r_3}\cong \mf{sl}(2,\CC)\otimes_{\CC}R,$$ where $R$ is a ring of $D_2$-invariant meromorphic functions on $E_{r_1,r_2,r_3}$. 
 
From \eqref{holod_basis}, $\mc{H}_{r_1,r_2,r_3}$ decomposes as a direct sum of subalgebras 
\begin{equation}\label{def:AB}
\mc{A}:=\bigoplus_{(i,j,k)\in \mathrm{Cycl}}\CC[\lambda] \lambda_iv_i\oplus \CC[\lambda] \lambda_j\lambda_kv_i,\quad \mc{B}:=\bigoplus_{(i,j,k)\in \mathrm{Cycl}}\CC[\lambda^{-1}]\lambda^{-1} \lambda_iv_i\oplus \CC[\lambda^{-1}] \lambda^{-1}\lambda_j\lambda_k v_i.
\end{equation}
Both $\mc{A}$ and $\mc{B}$ are Lie subalgebras, which can be verified using  $\lambda=\lambda_i^2+A_i$. By Corollary \ref{cor:Cor} and the correspondence $\lambda_iv_i \leftrightarrow X_i$, the subalgebra $\mc{A}$ is isomorphic to the $D_2$-automorphic Lie algebra with a single orbit of punctures.

Recall that the curve $E_{r_1,r_2,r_3}$ can be uniformised using the functions $\mu_i$ on $\CC/\LL$, with $$\mu_i(z)=\frac{\theta_1'(0|\tau)}{\theta_{i+1}(0|\tau)}\frac{\theta_{i+1}(2z|\tau)}{\theta_1(2z|\tau)}, \quad \lambda(\tau)=\frac{r_3-r_2}{r_3-r_1}.$$ 
Moreover, recall that $\wp_{\LL}(2z)=\frac{1}{4}\wp_{\frac{1}{2}\LL}(z)=\mu_i(z)^2+A_i$ by \eqref{eq:rel-wp-mu} (using that $A_i=r_i-\frac{1}{3}(r_1+r_2+r_3)$). 
Using this uniformisation, and identifying $\lambda$ with $\wp_{\LL}(2z)=\frac{1}{4}\wp_{\frac{1}{2}\LL}(z)$, the Holod algebra $\mc{H}_{r_1,r_2,r_3}$ admits an analytic realisation: 
\begin{equation}\label{def:analytic-basis}
X_{i}^{2n+2}(z)=\wp_{\LL}(2z)^n\mu_{j}(z)\mu_k(z)v_i, \,\,X_{i}^{2m+1}(z)=\wp_{\LL}(2z)^m\mu_{i}(z)v_i,
\end{equation}
for $m,n\in \ZZ$ and cyclic $(i,j,k)$. 

The complex-analytic isomorphism $E_{r_1,r_2,r_3}\cong T\setminus D_2\cdot \{0\}$ identifies the abstract and analytic descriptions of the algebra. We denote the analytic realisation of  $\mc{H}_{r_1,r_2,r_3}$ by $\mc{H}_{\tau}$ and we shall interchangeably refer to the Holod algebra as either $\mc{H}_{\tau}$ or  $\mc{H}_{r_1,r_2,r_3}$.  

Recall that the $\mu_i$ have simple poles at $ D_2\cdot \{0\}$ and note that $1/\lambda=1/\wp_{\LL}(2z)$ has poles in $\{\pm z_0\}$ for some $z_0\in \LL$, depending on the lattice. 
Now, the function $\wp_{\LL}(2z)$ is $D_2$-invariant and has poles exactly at the orbit $D_2\cdot \{0\}$. Using \eqref{invariance}, it follows that $\mc{H}_{\tau}$ is a $D_2$-invariant Lie algebra on $\CC/\LL$ with poles restricted to the (union of) orbits $ D_2\cdot \{0\}$, $ D_2\cdot \{z_0\}$ and $ D_2\cdot \{-z_0\}$. (If $z_0=-z_0$, there are only two distinct orbits.) 

To realise $\mc{H}_{\tau}$ as an automorphic Lie algebra, we recall the homomorphisms $\rho: D_2\rightarrow \A(\mf{sl}(2,\CC))$ and $\sigma:D_2\rightarrow \A(T)$ defined by $$\rho(t_1)=\Ad\begin{pmatrix}
1&0\\0&-1
\end{pmatrix},\quad \rho(t_2)=\Ad\begin{pmatrix}
0&1\\1&0
\end{pmatrix},$$ and $$\sigma(t_1)z=z+\tfrac{1}{2}, \quad \sigma(t_2)z=z+\tfrac{\tau}{2}.$$  
The basis elements \eqref{def:analytic-basis} satisfy 
\begin{equation*}
X_{i}^{n}(\sigma(t)z)=\rho(t)X_{i}^{n}(z), \quad t\in D_2
\end{equation*}
for all $n\in \ZZ$.
It follows that 
$$\mc{H}_{\tau}\subset\mf{A}(\mf{sl}(2,\CC), \tau,\{0,\pm z_0\})=(\mf{sl}(2,\CC)\otimes_{\CC}\mathcal{O}_{T\setminus D_2\cdot \{0,\pm z_0\}})^{\rho\otimes \tilde{\sigma}(D_2)}.$$
For brevity, we drop the $\mf{sl}(2,\CC)$ in the notation and write $\mf{A}( \tau,\{0,\pm z_0\})$. 

By Theorem \ref{Thm2}, with $S=\{0,\pm z_0\}$, $$\mf{A}( \tau, S)\cong \mf{sl}(2,\CC)\otimes_{\CC}\CC[\wp_{\frac{1}{2}\LL},\wp_{\frac{1}{2}\LL}',\xi_{-z_0},\xi_{z_0}],$$
where $\xi_p(z)=\mu_1(z)\mu_1(z-p)$, cf. \eqref{xip}. If we show that $\mf{A}( \tau, S)\subset \mc{H}_{\tau}$, it follows that $\mc{H}_{\tau}=\mf{A}( \tau, S)$. Consequently, $ \mc{H}_{\tau}$ is isomorphic to an $\mf{sl}(2,\CC)$-elliptic current algebra. 

Before stating our main result on $\mc{H}_{\tau}$, we need the following facts about the Weierstrass $\wp$-function associated with a complex torus isomorphic to $\CC/(\ZZ+\ZZ\mathrm{i})$.

\begin{Lemma}\label{Lem:zeros-wp}
Let $\LL=\ZZ+\ZZ \tau$. The Weierstrass $\wp_{\LL}$-function has a double zero if and only if $[\tau]=[\mathrm{i}]$ in $SL(2,\ZZ)\backslash \mathbb{H}$. Furthermore,  $\wp_{\LL_{\mathrm{i}}}(2z)=4\mu_2(z)^2$, where $\mu_2(z)=\frac{\theta_1'(0|\mathrm{i})}{\theta_3(0|\mathrm{i})}\frac{\theta_3(2z|\mathrm{i})}{\theta_1(2z|\mathrm{i})}$ and $\LL_{\mathrm{i}}=\ZZ+\ZZ\mathrm{i}$.
\end{Lemma}

\begin{proof}
The first statement is proved in \cite{eichler1982zeros}{, p. 401}. For the second, note that $\mu_2^2$ defines a function on $\CC/\frac{1}{2}\LL_{\mathrm{i}}$ by Lemma \ref{IsotypicalComponents}.  Both $4\mu_2^2$ and $\wp_{\frac{1}{2}\LL_{\mathrm{i}}}$ have a unique double zero at $(1+\mathrm{i})/4$ and a unique double  pole at $0$. Their Laurent expansions at $z=0$ of both functions begin as $z^{-2}+O(1)$.  By the  Riemann-Roch theorem, there exists a meromorphic function $g$ on $\CC/\frac{1}{2}\LL_{\mathrm{i}}$ with poles of order less than two, such that   $\wp_{\frac{1}{2}\LL_{\mathrm{i}}}=4\mu_2^2+g$. This forces $g$ to be a constant, and since $\mu_2^2$ and $\wp_{\frac{1}{2}\LL_{\mathrm{i}}}$ vanish at the same location, this forces $g=0$. Using  $\frac{1}{4}\wp_{\frac{1}{2}\LL_{\mathrm{i}}}(z)=\wp_{\LL_{\mathrm{i}}}(2z)$, the claim follows.
\end{proof}

\begin{Theorem}\label{thm:Holod}
Let $\LL=\ZZ+\ZZ \tau$, and let $T=\CC/\LL$. Holod's algebra $\mc{H}_{\tau}$ on $T$ is an automorphic Lie algebra based on $\mf{sl}(2,\CC)$ with respect to the actions $\rho$ and $\sigma$ as above, where $T$ is punctured at  $D_2\cdot S$, where $S=\{0,-z_0,z_0\}$ and $\pm z_0$ are the zeros of $\wp_{\LL}(2z)$:
$$\mc{H}_{\tau}=\mf{A}(\tau, \{0, \pm z_0\}).$$
A normal form of $\mc{H}_{\tau}$ is given by 
 $$\mc{H}_{\tau}=\CC[\wp_{\frac{1}{2}\LL},(\wp_{\frac{1}{2}\LL})^{-1},\wp_{\frac{1}{2}\LL}']\left\langle H,E,F\right\rangle,$$
 where $H=\Ad(\Omega)h$,  $E=\Ad(\Omega)e$ and  $F=\Ad(\Omega)f$, with $\Omega$ defined in \eqref{def:Omega}.
For $\tau=\mathrm{i}$, we have $$\mc{H}_{\mathrm{i}} \cong \mc{E}_{k,\nu^{\pm}},$$
where $\mc{E}_{k,\nu^{\pm}}$ is Uglov's algebra with $k=\tfrac{1}{\sqrt{2}}$ and $\nu^+=0$, $\nu^-=\tfrac{1+\mathrm{i}}{4}$.
\end{Theorem}

\begin{proof}
In the discussion prior to Lemma \ref{Lem:zeros-wp}, we already argued that $\mc{H}_{\tau}\subset \mf{A}( \tau,\{0,\pm z_0\})$. We now prove the reverse inclusion $\mf{A}( \tau,\{0,\pm z_0\})\subset \mc{H}_{\tau}$. We divide the proof into two cases. Case 1 corresponds to the situation $[\tau]\neq [\mathrm{i}]$, in which $\mc{H}_{\tau}$ consists of $\mf{sl}(2,\CC)$-valued maps with poles in exactly three distinct orbits --  namely at $D_2$-orbits of $z=0, -z_0,z_0$. The  strategy is to decompose $\mf{A}( \tau,\{0,\pm z_0\})$ as a direct sum of three suitably constructed subalgebras, and then to show that each lies inside $\mc{H}_{\tau}$. Case 2 treats the special situation $[\tau]=[\mathrm{i}]$. The proof follows along similar lines, except that now $\wp_{\LL_{\mathrm{i}}}$ has a single double zero, and we decompose $\mf{A}( \tau,\{0,\pm z_0\})$ as a direct sum of only two subalgebras.  \\

\noindent \textbf{Case 1. $[\tau]\neq [\mathrm{i}]$}\\

Suppose that $\mc{H}_{\tau}$ is defined on a complex torus $\CC/\LL$ in an isomorphism class distinct from $[\tau]=[\mathrm{i}]$. By Lemma \ref{Lem:zeros-wp}, $\wp_{\LL}(2z)$ has two distinct sets of zeros at $D_2\cdot \{z_0\}$ and $D_2\cdot \{-z_0\}$ in $\CC/\LL$, both disjoint with $D_2\cdot \{0\}$. 
Consider the following elements of $\mc{H}_{\tau}$: $$Y_i^m(z)=\frac{1}{\wp_{\LL}(2z)^m}\mu_{i}(z)v_i,\quad Z_i^m(z)=\frac{1}{\wp_{\LL}(2z)^m}\mu_{j}(z)\mu_k(z)v_i,$$
where $i=1,2,3$, and $(i,j,k)$ a cyclic permutation of $(1,2,3)$, and $m>0$. 
The elements $Y_i^m$ and $Z_i^m$ are $D_2$-invariant with respect to action defined by $\rho\otimes \tilde{\sigma}$ as defined in \eqref{rhoprime}, \eqref{def:Gamma-action}. Recall that $\tilde{\sigma}$ is the induced action of $D_2$ on meromorphic functions on $T$. They have order $m$ poles precisely at $ D_2\cdot \{z_0,-z_0\}$. Indeed, the order $2m$ zeros of $1/\wp_{\LL}(2z)^m$ at $D_2\cdot \{0\}$ cancel out the poles at $D_2\cdot \{0\}$ of both $\mu_{i}v_i$ and $\mu_{j}\mu_kv_i$ (recall that the $\mu_i$ have poles exactly at $D_2\cdot \{0\}$). Thus only the zeros of $\wp_{\LL}(2z)$ contribute to the pole set of $Y_i^m$ and $Z_i^m$. 

Set $m=1$. The elements $Y_i^1$ and $Z_i^1$ have order 1 poles at $ D_2\cdot \{z_0,-z_0\}$. By the Riemann-Roch theorem, applied to the divisor $D:= \sum_{\gamma\in D_2 }(-\gamma\cdot z_0)+ \sum_{\gamma\in D_2 }(\gamma\cdot z_0)$ on $T$, we can rewrite
\begin{align*}
Y_i^1(z)&=c_1\mu_i(z-z_0)v_i+c_2\mu_i(z+z_0)v_i,\\
Z_i^1(z)&=d_1\mu_i(z-z_0)v_i+d_2\mu_i(z+z_0)v_i,
\end{align*}
with $i=1,2,3$ and for some constants $c_j,d_j$ (also dependent on $i$) which satisfy $\beta:=d_2c_1-c_2d_1\neq 0$. This follows because $Y_i^1$ and $Z_i^1$ are $\CC$-linearly independent, for each $i$, since they vanish at distinct points (the $\mu_i$ have disjoint sets of zeros). The elements 
$$
W_i^+:=d_2Y_i^1-c_2Z_i^1,\quad W_i^-:=d_1Y_i^1-c_1Z_i^1,
$$
only have order 1 poles in $ D_2\cdot \{z_0\}$ and order 1 poles in $ D_2\cdot \{-z_0\}$, respectively. More explicitly, 
\begin{align*}
W_i^+(z)&=\beta\mu_i(z-z_0)v_i=\beta X_i(z-z_0),\\
W_i^-(z)&=-\beta\mu_i(z+z_0)v_i=-\beta X_i(z+z_0),
\end{align*}
 where the $X_i$ are defined in \eqref{gen:X}.

Consider the two Lie subalgebras $\mc{L}_{\pm}\subset \mc{H}_{\tau}$ generated as Lie algebras by the $W_i^{\pm}$, that is, $$\mc{L}_{\pm}=\CC\langle W_1^{\pm} , W_2^{\pm}, W_3^{\pm}  \rangle.$$
The $\mc{L}_{\pm}$ coincide with the automorphic Lie algebras $\mf{A}( \tau,\{\pm z_0\})$, which follows from Corollary \ref{cor:Cor} together with \eqref{eq:isom-one-orbit}. We have therefore established that $\mf{A}( \tau,\{\pm z_0\})\subset \mc{H}_{\tau}$. The inclusion $\mf{A}( \tau,\{0\})\subset \mc{H}_{\tau}$ holds as well since $\mc{A}$, as defined in \eqref{def:AB}, can be identified with $\mf{A}( \tau,\{0\})$. 

The automorphic Lie algebra $\mf{A}( \tau,\{0,\pm z_0\})$ decomposes as
\begin{equation*}
\mf{A}( \tau,\{0,\pm z_0\})=\mf{A}( \tau,\{0\})\oplus \mf{A}( \tau,\{z_0\})\oplus \mf{A}( \tau,\{-z_0\})
\end{equation*}
by Lemma \ref{lem:decomp}.
We had already established that $\mc{H}_{\tau}\subset \mf{A}( \tau,\{0,\pm z_0\})$, and thus we obtain $\mf{A}( \tau,\{0,\pm z_0\})= \mc{H}_{\tau}$ as Lie subalgebras. Alternatively, one can use  $\mc{H}_{\tau}\cong \mc{A}\oplus \mc{B}$ by \eqref{def:AB}, and note that $\mc{B}$ can be identified with $\mf{A}( \tau,\{z_0\})\oplus \mf{A}( \tau,\{-z_0\})$. Hence, the Holod algebra $\mc{H}_{\tau}$ coincides with the automorphic Lie algebra $\mf{A}( \tau,\{0,\pm z_0\})$.

Theorem \ref{Thm2} establishes that $$ \mc{H}_{\tau}=\mf{A}( \tau,\{0,\pm z_0\})=\CC[\wp_{\frac{1}{2}\LL},\wp_{\frac{1}{2}\LL}',\xi_{-z_0},\xi_{z_0}]\left\langle H,E,F\right\rangle.$$
The above description of $\mc{H}_{\tau}$ may be simplified in the following way. 
The divisor of $\wp_{\frac{1}{2}\LL}'/\wp_{\frac{1}{2}\LL}$ (defined on $\CC/\frac{1}{2}\LL$) equals
\begin{equation}\label{eq:div}
\mathrm{div}\left(\frac{\wp_{\frac{1}{2}\LL}'}{\wp_{\frac{1}{2}\LL}}\right)=-(-z_0)-(z_0)-(0)+(1/4)+(\tau/4)+((1+\tau)/4).
\end{equation}
We may take a $\CC$-linear combination of $1/\wp_{\frac{1}{2}\LL}$ and $\wp_{\frac{1}{2}\LL}'/\wp_{\frac{1}{2}\LL}$ to eliminate the pole at $z_0$ or $-z_0$ (but not at the same time). 

Consider the divisor $D_{\pm}=(0)+(\pm z_0)$ on $\CC/\frac{1}{2}\LL$. The spaces $L(D_{\pm})$ are two-dimensional and are spanned by $1$ and $\xi_{\pm z_0}$, respectively, where we recall that the functions $\xi_{p}$ are defined as $\xi_{p}(z)=\mu_1(z)\mu_1(z-p)$. It follows that the functions $\xi_{\pm z_0}$
lie in the ring generated by $1/\wp_{\frac{1}{2}\LL}$ and $\wp_{\frac{1}{2}\LL}'/\wp_{\frac{1}{2}\LL}$. We may conclude that $\CC[\wp_{\frac{1}{2}\LL},\wp_{\frac{1}{2}\LL}',\xi_{-z_0},\xi_{z_0}]=\CC[\wp_{\frac{1}{2}\LL}, (\wp_{\frac{1}{2}\LL})^{-1}, \wp_{\frac{1}{2}\LL}']$, thereby proving the first claim. \\

\noindent \textbf{Case 2. $[\tau]= [\mathrm{i}]$}\\

We will now consider the case $[\tau]=[\mathrm{i}]$, where the two zeros of $\wp_{\LL}$ coincide. For convenience, we take $\tau=\mathrm{i}$, so that $T=\CC/\LL_{\mathrm{i}}$, with $\LL_{\mathrm{i}}=\ZZ+\ZZ\mathrm{i}$. The general argument is obtained by replacing $\LL_{\mathrm{i}}$ by $\alpha\LL_{\mathrm{i}}$ for some $\alpha\in \CC^*$, and scaling the zero $z_0=\frac{1+\mathrm{i}}{4}$ of $\wp_{\frac{1}{2}\LL_{\mathrm{i}}}$ to $\alpha z_0$.

Reasoning as in the previous case, we have $\mc{H}_{\mathrm{i}}\subset \mf{A}( \mathrm{i}, \{0,\tfrac{1+\mathrm{i}}{4}\})$. To prove the reverse inclusion, we take the basis elements $Y_i^m(z)$ and $Z_i^m(z)$ and consider $$Y_2^1(z)=\frac{1}{\mu_2(z)}v_2, \quad Z_1^1(z)=\frac{1}{\mu_2(z)}\mu_3(z)v_1, \quad Z_3^1(z)=\frac{1}{\mu_2(z)}\mu_1(z)v_3,$$
where $\mu_2^2(z)=\frac{1}{4}\wp_{\frac{1}{2}\LL_{\mathrm{i}}}(z)=\wp_{\LL}(2z)$ by Lemma \ref{Lem:zeros-wp}. These elements have poles precisely at $D_2\cdot \{\frac{1+\mathrm{i}}{4}\}$ since $\mu_2$ vanishes exactly at this set. A similar argument as before shows that, up to scalar multiples, $Y_2^1(z)=X_2(z-z_0)$, $Z_1^1(z)=X_1(z-z_0)$ and $Z_3^1(z)=X_3(z-z_0)$. Consequently, both the Lie algebras 
$$
\mf{A}( \mathrm{i}, \{0\})=\CC\langle X_1(z), X_2(z), X_3(z) \rangle,\quad \mf{A}( \mathrm{i}, \{z_0\})=\CC\langle X_1(z-z_0),X_2(z-z_0), X_3(z-z_0) \rangle
$$
are contained in $\mc{H}_{\mathrm{i}}$, and hence also $\mf{A}( \mathrm{i}, \{0,z_0\})=\mf{A}( \mathrm{i}, \{0\})\oplus \mf{A}( \mathrm{i}, \{z_0\})\subset \mc{H}_{\mathrm{i}}$. Together with Theorem \ref{Thm2}, this establishes 
$$\mc{H}_{\mathrm{i}}=\CC[\wp_{\frac{1}{2}\LL_{\mathrm{i}}},\wp_{\frac{1}{2}\LL_{\mathrm{i}}}',\xi_{z_0}]\left\langle H,E,F\right\rangle.$$ 
Hence 
$$\mc{H}_{\mathrm{i}}\cong \mf{sl}(2,R) \cong \mf{sl}(2,\CC)\otimes_{\CC}R,$$ where $R=\CC[\wp_{\frac{1}{2}\LL_{\mathrm{i}}},\wp_{\frac{1}{2}\LL_{\mathrm{i}}}',\xi_{z_0}]$. 
For $\tau=\mathrm{i}$, the divisor \eqref{eq:div} becomes $$\mathrm{div}\left(\frac{\wp_{\frac{1}{2}\LL_{\mathrm{i}}}'}{\wp_{\frac{1}{2}\LL_{\mathrm{i}}}}\right)=-((1+\mathrm{i})/4)-(0)+(1/4)+(\mathrm{i}/4),$$ and since $\xi_{z_0}$ also has simple poles precisely at $0$ and $z_0=(1+\mathrm{i})/4$, we may replace $\xi_{z_0}$ by $\wp_{\frac{1}{2}\LL_{\mathrm{i}}}'/\wp_{\frac{1}{2}\LL_{\mathrm{i}}}$ in the description of $R$. Therefore, we have $R=\CC[\wp_{\frac{1}{2}\LL_{\mathrm{i}}},\wp_{\frac{1}{2}\LL_{\mathrm{i}}}'/\wp_{\frac{1}{2}\LL_{\mathrm{i}}}]$. Moreover, using that $(\wp_{\LL_{\mathrm{i}}}')^2=4\wp_{\LL_{\mathrm{i}}}^3-g_2(\mathrm{i})\wp_{\LL_{\mathrm{i}}}$ and $g_2(\mathrm{i})\neq 0$, we have $1/\wp_{\frac{1}{2}\LL_{\mathrm{i}}}\in R$, so that $R=\CC[\wp_{\frac{1}{2}\LL_{\mathrm{i}}},(\wp_{\frac{1}{2}\LL_{\mathrm{i}}})^{-1}, \wp_{\frac{1}{2}\LL_{\mathrm{i}}}']$. The general case of $[\tau]=[\mathrm{i}]$ now follows as well, as we have remarked above. 

By Theorem \ref{thm:uglov}, $\mc{H}_{\mathrm{i}}\cong \mc{E}_{\frac{1}{\sqrt{2}}, 0, \frac{1+\mathrm{i}}{4}}$, using that for $\tau=\mathrm{i}$, we have $k=\frac{\theta_2^2(0|\mathrm{i})}{\theta_3^2(0|\mathrm{i})}=\frac{1}{\sqrt{2}}$.
\end{proof}

The normal form of $\mc{H}_{\tau}$ in Theorem \ref{thm:Holod} is given in terms of the modulus $\tau$, i.e., in terms of a uniformisation of the underlying elliptic curve $E_{r_1,r_2,r_3}$. We can also give a more intrinsic description of $\mc{H}_{\tau}$ as follows. Recall that $\lambda=\lambda_i^2+A_i$ with $A_i=r_i-\frac{1}{3}(r_1+r_2+r_3)$. Reformulating Theorem \ref{thm:Holod}, we obtain
$$\mc{H}_{r_1,r_2,r_3}=\CC[\lambda,\lambda^{-1}, \lambda_1\lambda_2\lambda_3]\left\langle \tilde{H},\tilde{E},\tilde{F}\right\rangle\cong \mf{sl}(2,R)$$
where $\tilde{H},\tilde{E},\tilde{F}$ (expressed in terms of $\lambda_1,\lambda_2,\lambda_3$) are defined in \eqref{def:HEFtilde}, and where $$R=\CC[x,x^{-1},y]/\left(y^2-(x-A_1)(x-A_2)(x-A_3)\right).$$ These generators satisfy the standard $\mf{sl}(2,\CC)$-relations. 
The normal form of $\mc{H}_{\tau}$ allows us to determine its $\CC$-isomorphism classes, as stated in the next corollary.

\begin{Corollary}
For the Holod algebra $\mc{H}_{\tau}$, we have $\mc{H}_{\tau}\cong \mc{H}_{\tau'}$ as Lie algebras if and only if $[\tau]=[\tau']$.
\end{Corollary}
\begin{proof}
Two complex tori $\CC/\LL$ and $\CC/\LL'$ are isomorphic if and only if there exists $\alpha\in \CC^*$ such that $\LL'=\alpha\LL$. 
Now, $\pm z_0\in \CC/\LL$ are the zeros of $\wp_{\LL}(2z)$ if and only if $\pm \alpha z_0\in \CC/\alpha\LL$ are the zeros of $\wp_{\alpha\LL}(2z)$. 
 
 An isomorphism $\CC/\LL\cong \CC/\alpha\LL$ is given by $\phi(z)=\alpha z$,
which maps $D_2\cdot S_1=D_2\cdot \{0,\pm z_0\}\subset \CC/\LL$ to $D_2\cdot S_2=D_2\cdot \{0, \pm \alpha z_0\}\subset \CC/\alpha\LL$. Since $$\mc{H}_{\tau}=\mf{A}(\tau, \{0, \pm z_0\}),$$ we may apply Corollary \ref{cor:isomorphism_classes}, which establishes that $\mc{H}_{\tau}\cong \mc{H}_{\tau'}$ if and only if $[\tau]=[\tau']$.
\end{proof}

\section{Multicomponent Landau--Lifshitz systems}\label{sec:LL}
In this section, we consider the multicomponent Landau--Lifshitz system introduced in \cite{golubchik2000multicomponent},
\begin{equation*}
S_t=\left(S_{xx}+\frac{3}{2}\langle S_x,S_x\rangle S\right)_x+\frac{3}{2}\langle S,RS\rangle S_x,\quad \langle S,S\rangle=1,
\end{equation*}
where $R=\diag(r_1,\ldots,r_n)$ and $S=(s^1(x,t),\ldots, s^n(x,t))^T\in \mathbb{K}^n$, with $n\geq 2$. We assume that the parameters $r_1,\ldots, r_n\in \mathbb{K}$ are pairwise distinct, i.e., $r_i\neq r_j$ for all $i\neq j$. 

Although the system is defined more generally, we will focus on the case $\mathbb{K}=\CC$ and $n=3$, for which the system coincides with the higher symmetry of third order for the fully anisotropic Landau--Lifshitz equation. We retain the general $n$ notation in some places to indicate that the methodology can, in principle, be applied to general $n$.

 We show that the zero-curvature representation (ZCR) for this equation found in \cite{igonin2013infinite}, admits a natural interpretation in terms of the automorphic Lie algebra $$(\mf{so}_{3,1}\otimes_{\RR}\CC[E_{r_1,r_2,r_3}])^{D_2},$$ where $\mf{so}_{3,1}$ is the Lie algebra of the matrix Lie group $O(3,1)$, and $\CC[E_{r_1,r_2,r_3}]$ is the coordinate ring of the elliptic curve $E_{r_1,r_2,r_3}$.  The action of $D_2$ on $\mf{so}_{3,1}\otimes_{\RR}\CC[E_{r_1,r_2,r_3}]$ will be specified below. 

We show that the above automorphic Lie algebra is isomorphic, as a complex Lie algebra, to the direct sum of two copies of the current algebra $\mf{sl}(2,\CC)\otimes_{\CC}\mathcal{O}_{T\setminus D_2\cdot \{0\}}^{D_2}$. Furthermore, we shall make the connection between the WE (Wahlquist--Estabrook) algebra of the multicomponent Landau--Lifshitz system $\eqref{eq:PDE}$ for $n=3$ and the above automorphic Lie algebra.  

Finally, we show that the realisation of $\mf{R}_{r_1,r_2,r_3}$ as an automorphic Lie algebra (see Remark \ref{rem:R}) leads to a transparent description of the prolongation algebras of both the fully anisotropic Landau--Lifshitz equation and the non-singular Krichever--Novikov equation. In particular, it follows from this that both are isomorphic to the Lie algebra $\mf{S}_{\LL}$ defined as $$\mf{S}_{\LL}:=\mf{sl}(2,\CC)\otimes_{\CC}\CC[\wp_{\LL},\wp_{\LL}'] \oplus \CC^2,$$ where the sum is a direct sum of Lie algebras and $\LL$ is a suitable lattice. 
 
The notation $\TT$ will be used throughout this section to denote a complex torus $T$ punctured at a single orbit, namely $\TT=T\setminus D_2\cdot \{0\}$. We largely follow the approach and notations of \cite{igonin2013infinite}.

Consider the real Lie algebra $\mf{so}_{n,1}\subset \mf{gl}_{n+1}(\mathbb{R})$, which is the Lie algebra of the matrix Lie group $O(n,1)$  of linear transformations preserving the standard bilinear form of signature $(n,1)$. Explicitly, $$\mf{so}_{n,1}=\{X\in \mathrm{Mat}_{n+1}(\mathbb{R}): X^TI_{n,1}=-I_{n,1}X\},$$
where $I_{n,1}=\diag(1,1,\ldots, 1,-1)\in GL_{n+1}(\mathbb{R})$ and $(\cdot)^T$ denotes matrix transpose. Denote by $E_{ij}$ the matrix with a 1 in position $(i,j)$ and zeros everywhere else. The following elements form a basis for $\mf{so}_{n,1}$: 
\begin{equation}\label{basis-so3,1}
A_{ij}:=E_{ij}-E_{ji},\quad i<j\leq n, \quad B_{l,n+1}:=E_{l,n+1}+E_{n+1,l},\quad l=1,\ldots, n.
\end{equation}

Recall the algebraic curve \eqref{eq:algcurve} in $\CC^n$ defined by 
\begin{equation*}
E_{r_1,\ldots, r_n}: \lambda_i^2-\lambda_j^2=r_j-r_i,\quad i,j=1,\ldots, n.
\end{equation*}
The genus of the (compactified) curve is given by $1+(n-3)2^{n-2}$; see \cite{golubchik2000multicomponent}.

The Lie algebra $\mf{so}_{3,1}$ is isomorphic to $\mf{sl}(2,\CC)$ as a real Lie algebra and 
its complexification $(\mf{so}_{3,1})_{\CC}:=\mf{so}_{3,1}\otimes_{\RR}\CC$  is isomorphic to  $\mf{so}(4,\CC)\cong\mf{sl}(2,\CC)\oplus \mf{sl}(2,\CC)$ as a complex Lie algebra. More generally, one has $(\mf{so}_{n,1})_{\CC}\cong \mf{so}(n+1,\CC)$.

Consequently, viewing $\mf{so}_{3,1}\otimes_{\RR}\CC[E_{r_1,r_2,r_3}]$ as a complex Lie algebra, we obtain a natural isomorphism  $$\mf{so}_{3,1}\otimes_{\RR}\CC[E_{r_1,r_2,r_3}]\cong \left(\mf{sl}(2,\CC)\otimes_{\CC}\CC[E_{r_1,r_2,r_3}] \right)\oplus \left(\mf{sl}(2,\CC)\otimes_{\CC}\CC[E_{r_1,r_2,r_3}]\right),$$ 
where the sum is a direct sum of complex Lie algebras. 

Following \cite{igonin2013infinite}, we now introduce a $\mf{so}_{n,1}$-valued ZCR for the multicomponent Landau--Lifshitz system \eqref{eq:PDE}. Let $D_x$ and $D_t$ denote the total derivative operators with respect to $x$ and $t$ corresponding to \eqref{eq:PDE}, respectively. The ZCR is given by 
\begin{subequations}\label{eq:ZCR}
\begin{gather}
 M(\lambda_1,\ldots, \lambda_n)=\sum_{i=1}^n(E_{i,n+1} +E_{n+1,i})\otimes s^i\lambda_i,\\
N(\lambda_1,\ldots, \lambda_n)=D_x^2(M)+[D_x(M),M]+M\otimes\left(r_1+\lambda_1^2+\frac{1}{2}\langle S,RS\rangle  +\frac{3}{2}\langle S_x,S_x\rangle\right),\\
D_x(N)-D_t(M)+[M,N]=0,
\end{gather}
\end{subequations}
where $S(x,t)=(s^1(x,t),\ldots, s^n(x,t))^T$ and $\lambda_1,\ldots, \lambda_n\in \mathbb{C}$ are parameters satisfying \eqref{eq:algcurve}.

The Lax matrices $M$ and $N$ in \eqref{eq:ZCR} can be interpreted as regular maps $E_{r_1,\ldots, r_n}\rightarrow (\mf{so}_{n,1})_{\CC}$. 
For $n=3$ (the elliptic case), we construct an action of $D_2$ on $$\mf{so}_{3,1}\otimes_{\RR}\ot\cong\mf{so}_{3,1}\otimes_{\RR}\CC[E_{r_1,r_2,r_3}]$$  and show that $M$ and $N$ may be interpreted as $D_2$-equivariant maps on the elliptic curve $E_{r_1,r_2,r_3}$. In this way, they naturally define elements of an elliptic automorphic Lie algebra. 

We now describe the ingredients needed to make the automorphic Lie algebra framework available. Define a representation $\tilde{\rho}:D_2\rightarrow \A(\mf{so}_{3,1})$ by
\begin{equation}\label{rhotilde}
\tilde{\rho}(t_1)=\Ad\begin{pmatrix} 1&&&\\ &-1&&\\ &&-1&\\&&&1  \end{pmatrix},\quad \tilde{\rho}(t_2)=\Ad \begin{pmatrix} -1&&&\\ &-1&&\\ &&1&\\&&&1  \end{pmatrix}.
\end{equation} 

Recall the homomorphism $\sigma:D_2\rightarrow \A(E_{r_1,r_2,r_3})$ defined by
\begin{equation*}
\sigma(t_1)(\lambda_1,\lambda_2,\lambda_3)=(\lambda_1,-\lambda_2,-\lambda_3),\quad \sigma(t_2)(\lambda_1,\lambda_2,\lambda_3)=(-\lambda_1,-\lambda_2,\lambda_3),
\end{equation*}
or equivalently, in the analytic setting, by the transformations given in \eqref{def:Gamma-action}. 

We now show that the elements $$Q_i=(E_{i,4}+E_{4,i})\otimes \lambda_{i},\quad i=1,2,3$$ are invariant under the action of $D_2$ defined by $\tilde{\rho}\otimes \sigma$ (or equivalently by $\tilde{\rho}\otimes \tilde{\sigma}$ in the analytic formulation, where $\tilde{\sigma}$ denotes the induced action on $\ot$). 

\begin{Lemma}\label{lem:Qi}
Let  $D_2=\langle t_1,t_2\rangle $ and let $\tilde{\rho}(t_i)=\Ad(M_i)$ for $i=1,2$, where $M_i$ are as in \eqref{rhotilde}.  Then the elements $$Q_i=(E_{i,4}+E_{4,i})\otimes \lambda_{i}, \quad i=1,2,3,$$ are invariant under the action of $D_2$ defined by $\tilde{\rho}\otimes\sigma.$
\end{Lemma}

\begin{proof}
For $\bold{k}=(k_1,k_2,k_3,k_4)$ with $k_i\in \{0,1\}$, define $$\Delta_\bold{k}:=\Delta_{(k_1,k_2,k_3,k_4)}:=\diag((-1)^{k_1},\ldots, (-1)^{k_4}).$$ 
Then $\Ad(\Delta_\bold{k})E_{ij}=(-1)^{k_i-k_j}E_{ij}$. The automorphisms $\Ad(M_1),\Ad(M_2)$ defined in \eqref{rhotilde} generate the group $D_2$ inside $\A(\mf{so}_{3,1})$. Let $\bold{k}^{(1)}=(0,1,1,0)$ and $\bold{k}^{(2)}=(1,1,0,0)$. Note that $M_1=\Delta_{\bold{k}^{(1)}}$ and $M_2=\Delta_{\bold{k}^{(2)}}$. It follows that $$\Ad(M_{\ell})(E_{i,4}+E_{4,i})=(-1)^{k_i^{(\ell)}}(E_{i,4}+E_{4,i}), \quad \ell=1,2,$$
for $i=1,2,3$.
The action of $D_2$ on $E_{r_1,r_2,r_3}$ can be written as $$\lambda_i \xmapsto[]{\, t_{\ell}\,}(-1)^{k_i^{(\ell)}}\lambda_i, \quad i=1,2,3.$$ 
Consequently,
$$\tilde{\rho}(t_{\ell})\otimes \sigma(t_{\ell})(Q_i)= (-1)^{k_i^{(\ell)}}(E_{i,4}+E_{4,i})\otimes (-1)^{k_i^{(\ell)}}\lambda_i = Q_i$$ for $i=1,2,3$. Hence each $Q_i$ is invariant under the action of $D_2$. 
\end{proof}

The invariance of the $Q_i$ implies that $M$ and $N$  are invariants as well. Consequently, we may interpret $M$ and $N$ as elements of the automorphic Lie algebra 
\begin{align*}
(\mf{so}_{3,1}\otimes_{\mathbb{R}}\CC[E_{r_1,r_2,r_3}])^{\tilde{\rho}\otimes \sigma(D_2)},
\end{align*}
where $\tilde{\rho}$ and $\sigma$ are defined in \eqref{rhotilde} and \eqref{ActionC2xC2}, respectively.
In particular, this establishes that for $n=3$, the Lax matrices in \eqref{eq:ZCR} for \eqref{eq:PDE} can be interpreted as elements of the automorphic Lie algebra $(\mf{so}_{3,1}\otimes_{\RR}\ot)^{\tilde{\rho}\otimes \tilde{\sigma}(D_2)}$. 

We now relate this automorphic Lie algebra to the WE algebra of the multicomponent Landau--Lifshitz equation for $n=3$ and focussing on $\mathbb{K}=\CC$. In \cite{igonin2013infinite}, the subalgebra $$L(n)\subset (\mf{so}_{n,1})_{\CC}\otimes_{\CC}\CC[\lambda_1,\ldots,\lambda_n]/I,$$ generated by $Q_1,\ldots, Q_n$, is studied. Here $I$ is the ideal generated by $\lambda_i^2-\lambda_j^2-r_j+r_i$ for $ i,j=1,\ldots, n$ and  $$Q_i=(E_{i,n+1}+E_{n+1,i})\otimes \lambda_i \in(\mf{so}_{n,1})_{\CC}\otimes_{\CC}\CC[\lambda_1,\ldots,\lambda_n]/I.$$ It is shown in \cite{igonin2013infinite} that there is an isomorphism $L(n)\cong \g(n)$ for $n\geq 3$, where $\g(n)$ is the complex Lie algebra generated by $p_1,\ldots,p_n$ subject to the relations 
\begin{subequations}\label{Relations}
\begin{align}
[p_i,[p_j,p_k]]&=0, \quad i\neq j\neq k\neq i, \\ 
[p_i,[p_i,p_k]]-[p_j,[p_j,p_k]]&=(r_j-r_i)p_k,\quad i\neq k,\quad j\neq k,\quad i,j,k=1,\ldots,n. 
\end{align}
\end{subequations}
The WE algebra of system \eqref{eq:PDE} for $\mathbb{K}=\CC$ is isomorphic to $\g(n)\oplus \CC^2$.

To emphasise the dependence on the parameters $r_i$, we shall now write $\g(r_1,r_2,r_3)$ and $L(r_1,r_2,r_3)$ instead of $\g(3)$ and $L(3)$.  

It follows from Proposition \ref{prop:isom_pi} that $$\g(r_1,r_2,r_3)\cong \mf{R}_{r_1,r_2,r_3}\cong  (\mf{sl}(2,\CC)\otimes_{\CC}\ot)^{D_2},$$ see Remark \ref{rem:R}. Moreover, $L(r_1,r_2,r_3)\cong \g(r_1,r_2,r_3)$ according to \cite[Theorem 4]{igonin2013infinite}.  
By Theorem \ref{Thm2}
we obtain $$\g(r_1,r_2,r_3)\cong  \mf{sl}(2,\CC)\otimes_{\CC}\CC[x,y]/(y^2-4x^3+g_2(\LL)x+g_3(\LL)).$$
 
We now show that this implies an isomorphism of complex Lie algebras
\begin{equation}\label{liealgiso}
(\mf{so}_{3,1}\otimes_{\RR}\ot)^{\tilde{\rho}\otimes \tilde{\sigma}(D_2)}\cong \g(r_1,r_2,r_3)\oplus \g(r_1,r_2,r_3),
\end{equation}
where the sum is a direct sum of Lie algebras. 

Recall the basis elements of $\mf{so}_{3,1}$ defined in \eqref{basis-so3,1}. Consider the following linear combinations of the $A_{ij}$ and $B_{ij}$, which form a basis of $(\mf{so}_{3,1})_{\CC}$:
$$
K_i  = \frac{1}{2}(A_{jk}-\mathrm{i}B_{jk}), \quad 
L_i  = \frac{1}{2}(A_{jk}+\mathrm{i}B_{jk}),
$$
where $(i,j,k)$ is a cyclic permutation of $(1,2,3)$. We complexify the representation $\tilde{\rho}$ defined in \eqref{rhotilde}, and we use the same notation for the result.

The Lie subalgebras $\CC\langle K_1, K_2,  K_3\rangle$ and  $\CC\langle L_1, L_2,  L_3\rangle$ are $D_2$-submodules of $(\mf{so}_{3,1})_{\CC}$ and satisfy $$[K_i,K_j]=\varepsilon_{ijk}K_k,\quad [L_i,L_j]=\varepsilon_{ijk}L_k,\quad [K_i,L_j]=0.$$
Thus each subalgebra is isomorphic to $\mf{sl}(2,\CC)$, and there is a $D_2$-equivariant Lie algebra isomorphism $(\mf{so}_{3,1})_{\CC}\cong \mf{sl}(2,\CC)\oplus \mf{sl}(2,\CC)$, where the sum is of Lie algebras. This establishes \eqref{liealgiso}.

We summarise these results as follows.
\begin{Proposition}\label{prop:iso_g(3)}
There is an isomorphism of complex Lie algebras 
\begin{equation*}
(\mf{so}_{3,1}\otimes_{\RR}\ot)^{\tilde{\rho}\otimes \tilde{\sigma}(D_2)}\cong \g(r_1,r_2,r_3)\oplus \g(r_1,r_2,r_3),
\end{equation*}
where the sum is of Lie algebras.
Moreover, for $$L(r_1,r_2,r_3)=\CC\langle Q_1,Q_2,Q_3\rangle\subset (\mf{so}_{3,1}\otimes_{\RR}\ot)^{\tilde{\rho}\otimes \tilde{\sigma}(D_2)},$$ there are isomorphisms of Lie algebras $$ L(r_1,r_2,r_3)\cong \g(r_1,r_2,r_3)\cong \mf{sl}(2,\CC)\otimes_{\CC} \CC[x,y]/(y^2-4x^3+g_2(\LL)x+g_3(\LL)).$$
\end{Proposition}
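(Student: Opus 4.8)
The plan is to assemble the proposition from the chain of isomorphisms already constructed in the excerpt, so the proof is essentially a matter of citing the correct earlier results in the right order. First I would recall that $\mf{so}_{3,1}$ is a real form of $\mf{so}(4,\CC)\cong\mf{sl}(2,\CC)\oplus\mf{sl}(2,\CC)$, so that $\mf{so}_{3,1}\otimes_{\RR}\ot$, viewed as a complex Lie algebra, is isomorphic to $\mf{so}(4,\CC)\otimes_{\CC}\ot$ via the map $\tilde\phi$ already introduced. Transporting the $D_2$-action $\tilde\rho\otimes\tilde\sigma$ along $\tilde\phi$ gives the action $\hat\rho\otimes\tilde\sigma$, and since $\tilde\phi$ intertwines the two actions it restricts to an isomorphism of the fixed-point subalgebras. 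Because $\mf{so}(4,\CC)=\mf{sl}(2,\CC)\oplus\mf{sl}(2,\CC)$ as an $\RR$-algebra and the $D_2$-action $\hat\rho$ preserves each summand (one checks the outer action on $\mf{so}(4,\CC)$ induced from the diagonal matrices $S_1,S_2$ is inner, hence fixes the two ideals; this is the point that makes $\hat\rho=\rho\oplus\rho$ up to conjugation), taking invariants commutes with the direct sum and yields $(\mf{so}_{3,1}\otimes_{\RR}\ot)^{\tilde\rho\otimes\tilde\sigma(D_2)}\cong (\mf{sl}(2,\CC)\otimes_{\CC}\ot)^{\rho\otimes\tilde\sigma(D_2)}\oplus(\mf{sl}(2,\CC)\otimes_{\CC}\ot)^{\rho\otimes\tilde\sigma(D_2)}$, i.e. $\g(3)\oplus\g(3)$ once the identification of the summand with $\g(3)$ is in place.

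For the identification of a single summand I would invoke the results already recalled in this section: $(\mf{sl}(2,\CC)\otimes_{\CC}\ot)^{D_2}=\CC\langle X_1,X_2,X_3\rangle$ by Corollary \ref{Cor1}, the generators $X_i=v_i\otimes\mu_i$ satisfy exactly the defining relations \eqref{Relations} of $\g(3)$ (verified by the short bracket computation $[X_i,[X_i,X_k]]-[X_j,[X_j,X_k]]=X_k\otimes(\mu_i^2-\mu_j^2)=(r_j-r_i)X_k$ together with $[X_i,[X_j,X_k]]=0$), and by \cite[Theorem 3.2]{roelofs1993prolongation} the Lie algebra on three generators with these relations is in fact isomorphic to $\g(3)$ — so the surjection $\g(3)\twoheadrightarrow\CC\langle X_1,X_2,X_3\rangle$ is an isomorphism. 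Then $L(3)\cong\g(3)$ by \cite[Theorem 4]{igonin2013infinite}, and Theorem \ref{Thm2} (the $n=1$ orbit case) gives $(\mf{sl}(2,\CC)\otimes_{\CC}\ot)^{D_2}\cong\mf{sl}(2,\CC)\otimes_{\CC}\CC[\wp_{\frac12\LL},\wp_{\frac12\LL}']$, which, using the complex-analytic isomorphism $\CC/\LL\cong\CC/\frac12\LL$ and the standard presentation of the coordinate ring of the Weierstrass curve, equals $\mf{sl}(2,\CC)\otimes_{\CC}\CC[x,y]/(y^2-4x^3+g_2(\tau)x+g_3(\tau))$. Chaining these gives all the stated isomorphisms.

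I expect the main obstacle to be the bookkeeping around the $D_2$-action on $\mf{so}(4,\CC)$: one must be careful that $\hat\rho$ really does act as $\rho$ on each of the two $\mf{sl}(2,\CC)$ ideals (rather than swapping them), which amounts to checking that the images of $S_1,S_2$ under the half-spin accidental isomorphism act by inner automorphisms on each factor and reproduce, up to conjugation, the representation $\rho$ of \eqref{rhoprime}; the uniqueness-up-to-conjugation of a faithful $\rho:D_2\to\A(\mf{sl}(2,\CC))$ noted after \eqref{rhoprime} then closes the gap. Everything else is a citation or the already-displayed one-line bracket computation, so no further grinding is needed.
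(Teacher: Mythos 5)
Your proposal is correct and follows essentially the same route as the paper: transport the $D_2$-action along $\tilde{\phi}$ to $\mf{so}(4,\CC)\otimes_{\CC}\ot\cong(\mf{sl}(2,\CC)\oplus\mf{sl}(2,\CC))\otimes_{\CC}\ot$, take invariants summand by summand, and then identify each summand with $\g(3)$ via Corollary \ref{Cor1}, the bracket relations \eqref{Relations}, the citations to \cite{roelofs1993prolongation} and \cite{igonin2013infinite}, and Theorem \ref{Thm2} for the Weierstrass current-algebra form. The only difference is that you spell out the check that $\Ad(S_1),\Ad(S_2)$ act by inner automorphisms of $\mf{so}(4,\CC)$, hence preserve the two $\mf{sl}(2,\CC)$ ideals and restrict on each to a conjugate of $\rho$ — a point the paper leaves implicit — and that verification indeed goes through as you describe.
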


The WE algebra of the Landau--Lifshitz equation is closely related to the Lie algebra $\g(r_1,r_2,r_3)$; see \cite{roelofs1993prolongation, igonin2013infinite}. More precisely, this prolongation algebra is isomorphic to a direct sum of Lie algebras $$\g(r_1,r_2,r_3)\oplus \CC^2,$$ where $\CC^2$ is the two-dimensional complex Lie algebra. This algebra is not usually realised as an automorphic Lie algebra with symmetry group $D_2$. 

Similarly, the generalised prolongation algebra of the non-singular Krichever--Novikov equation, as described in \cite{igonin2002prolongation},  is also isomorphic to $\g(r_1,r_2,r_3)\oplus \CC^2$ \cite[Theorem 2]{igonin2002prolongation}.  
We summarise these observations in the following corollary.

\begin{Corollary}
The prolongation algebra in the sense of Wahlquist--Estabrook of the Landau--Lifshitz equation are isomorphic, as Lie algebras, to $$\mf{S}_{\LL}=\mf{sl}(2,\CC)\otimes_{\CC}\CC[\wp_{\LL},\wp_{\LL}'] \oplus \CC^2,$$
where $\LL$ corresponds to the pairwise distinct parameters $r_1,r_2,r_3$ in the Landau--Lifshitz equation \eqref{eq:LL}. The prolongation algebra of the non-singular Krichever--Novikov equation, as discussed in \cite{igonin2002prolongation}, is also isomorphic to $\mf{S}_{\LL}$ for a suitable choice of $r_1,r_2,r_3$.  
\end{Corollary}

\begin{Remark}
The Lie algebra generated by $X_1,X_2,X_3$ is often denoted in the literature \cite{roelofs1993prolongation, igonin2013infinite} as $\mf{R}(e_1,e_2,e_3)$ or $\mf{R}_{e_1,e_2,e_3}$, where the $e_i$ correspond to the $r_i$ in the definition of $E_{r_1,r_2,r_3}$ \eqref{eq:algcurve}. This Lie algebra is well known and appears in a variety of contexts, most notably in connection to the Landau--Lifshitz equation. It arises as a subalgebra of the hidden symmetry algebra of the Landau--Lifshitz equation \cite{holod1987hidden}, as we have shown in \eqref{def:AB}. To the best of our knowledge, it has not been previously identified as an elliptic current algebra. This perspective provides a natural explanation for properties of $\mf{R}_{e_1,e_2,e_3}$, such as the quasi-finiteness established in  \cite[Theorem 21]{IGONIN2006939}.
\end{Remark}
\begin{Remark}
The construction carried out in this section has a natural counterpart over the real numbers. In particular, the ZCR \eqref{eq:ZCR} has a real version where we assume $S=(s^1,\ldots,s^n)\in \RR^n$ and $\lambda_1,\ldots, \lambda_n\in \RR$ lie on the curve \eqref{eq:algcurve} in $\RR^n$. Furthermore, the Lax matrices can be viewed as elements of a real Lie algebra of invariants.  Recall that in Section \ref{sec:Basics}, we introduced a real Lie algebra of invariants, closely related to automorphic Lie algebras. These algebras play a role when the field $\CC$ is changed to $\RR$ in the case $n=3$. 
\end{Remark}

\section{Conclusion and outlook}
We have shown that the automorphic Lie algebras $(\g\otimes_{\CC}\ot)^{D_2}$ based on a complex reductive Lie algebra $\g$, are current algebras for any punctured complex torus $\TT$ (cf. Theorem \ref{Thm2}). In this construction, the group $D_2$ acts by inner automorphisms on $\g$ (factoring through $PGL(2,\CC)$) and acts faithfully and fixed point free on $T$. 

The special case of $\g=\mf{sl}(2,\CC)$ has notable applications in the theory of integrable systems. When there is a single orbit of punctures, the corresponding algebra arises in the context of the Wahlquist--Estabrook (WE) algebra of the Landau--Lifshitz equation and the prolongation algebra of the Krichever--Novikov equation. Two or three orbits of punctures correspond to Lie algebras introduced by Uglov and Holod, respectively, again in the context of integrable systems.  In particular, the normal forms of these algebras show that they -- originally defined either via generators and relations (Uglov) or a basis over $\CC$ (Holod) -- are isomorphic to $\mf{sl}(2,R)$ for a suitable ring of elliptic functions $R$ (cf. Theorems \ref{thm:uglov} and \ref{thm:Holod}). 

We have explicitly implemented the automorphic Lie algebra framework in the context of an $n$-component generalisation of the Landau--Lifshitz equation by Golubchik and Sokolov \eqref{eq:PDE} in the case $n=3$ (Section \ref{sec:LL}). In particular, we showed that the Lax matrices $M$ and $N$ \eqref{eq:ZCR} of this equation for $n=3$,  also admit a natural interpretation in terms of an automorphic Lie algebra based on $\mf{so}_{3,1}$. A natural question is whether this perspective extends to $n>3$: can the WE algebra of \eqref{eq:PDE} can be described by automorphic Lie algebras with base Lie algebra $\mf{so}_{n,1}$ and a higher-genus algebraic curve?

The above WE algebra is closely related to a Lie algebra $\g(n)$ \eqref{Relations}. The realisation of $\g(3)$ as an automorphic Lie algebra reveals a transparent structure: it is isomorphic to $\mf{sl}(2,\CC[\wp,\wp'])$. It would be interesting to explore whether analogous results hold for $\g(n)$ with $n>3$, and whether similar normal forms can be found. Specifically, it would be interesting to investigate whether the Lie algebras $\g(n)$ for $n>3$ are also isomorphic to current algebras on higher-genus curves. 
Realising general WE algebras as automorphic Lie algebras could provide a deeper understanding of their structural properties, including representation-theoretic aspects, and may offer new perspectives in the classification of integrable PDEs.

Finally, given that the automorphic Lie algebras $(\mf{sl}(2,\CC)\otimes_{\CC}\ot)^{D_2}$, for different numbers of orbits of punctures, appear in (quantum) integrable systems, it is natural to ask whether other infinite dimensional Lie algebras in this context also admit such a realisation. Specifically, one could ask whether they can be realised as a $D_2$-automorphic Lie algebra with additional punctures. This would provide new and transparent characterisations of these algebras.\\

\noindent \textbf{Acknowledgements} 
We would like to thank Alexander Veselov for helpful suggestions related to the construction of the intertwiner $\Omega(z)$ and Vincent Knibbeler for encouraging feedback. We would also like to thank Alec Cooper for many stimulating discussions.

\end{document}